\newtheorem{theorem}{Theorem}[section]
\newtheorem{construction}[theorem]{Construction}
\newtheorem{definition}[theorem]{Definition}
\newtheorem{lemma}[theorem]{Lemma}
\newtheorem{fact}[theorem]{Fact}
\newtheorem{proposition}[theorem]{Proposition}
\newtheorem{example}[theorem]{Example}
\newtheorem{remark}[theorem]{Remark}
\newtheorem{claim}[theorem]{Claim}
\begin{document}

\title{Complete Characterization on Maximum Pairwise Cross Intersecting Families (I)}
\author{
Yang Huang\thanks{School of Mathematics, Hunan University; Moscow Institute of Physics and Technology; 
E-mail: \url{1060393815@qq.com}.}
\,\,\,\,\,\,\,\,\,\,\,
Yuejian Peng\thanks{School of Mathematics, Hunan University; 
E-mail: \url{ypeng1@hnu.edu.cn}. 
The research is supported  by National Natural Science Foundation of China (No. 12571363) and National Natural Science Foundation of Hunan Province (Grant No. 2025JJ30003).}
}
\date{}



\maketitle

\vspace{-0.5cm}

\begin{abstract}
The families $\mathcal{A}$ and $\mathcal{B}$ are cross intersecting if $A\cap B\ne \emptyset$ for any $A\in \mathcal{A}$ and $B\in \mathcal{B}$.
Let $t\geq 2$ and $k_1\geq k_2\geq \cdots \geq k_t$.
We say that $(\mathcal{F}_1, \dots, \mathcal{F}_t)$ is an $(n, k_1, \dots, k_t)$-cross intersecting system if $\mathcal{F}_1 \subseteq{[n]\choose k_1}, \ldots ,\mathcal{F}_t \subseteq{[n]\choose k_t}$ are non-empty pairwise cross intersecting families.
Let $M(n,k_1,\ldots ,k_t)$ denote the maximum sum of sizes of families of an $(n,k_1,\ldots ,k_t)$-cross intersecting system.
The case $t=2$ was studied by Frankl and Tokushige (1992). Solving a problem of Shi, Frankl and Qian (2022), Huang, Peng and Wang (2024), as well as Zhang and Feng (2024) independently determined $M(n, k_1, \dots, k_t)$ for all $n\geq k_1+k_2$.

Observe that  $n\ge k_1 + k_t$ is the most  natural constraint.
However, the previous methods in the literature are invalid even in the range $n< k_1+k_2$.
In this paper, we overcome this obstacle and determine $M(n, k_1, \dots, k_t)$ for  $ k_1+k_3\le n < k_1+k_2$. Furthermore, we characterize all extremal structures. This could be viewed as the first `mixed-type' result about cross intersection problem.
Moreover, we introduce new concepts `$k$-partner',  `parity' and `corresponding $k$-set', and we develop some  methods to determine whether two L-initial cross intersecting families are maximal to each other.
In addition, we prove that in an extremal L-initial  $(n, k_1, \dots, k_t)$-cross intersecting system  $(\mathcal{F}_1,\ldots, \mathcal{F}_t)$, the sum $\sum_{i=1}^t{|\mathcal{F}_i|}$ can be expressed as a single variable function. We believe that our new result is of independent interest, and particularly, it plays an important role in the natural constraint $n\ge k_1+k_t$.
\end{abstract}

{{\bf Key words:}
Extremal combinatorics, Intersection theorems, Cross intersecting families}

{{\bf 2010 Mathematics Subject Classification.}  05D05, 05C65, 05D15.}

\section{Introduction}
Extremal combinatorics studies the maximum or minimum size of a combinatorial object that has certain properties.  One interesting class of extremal problems is related to the intersection problem, we refer the readers to the nice surveys \cite{FT16, Ellis2021}. This field is inspired by the fundamental  result of Erd\H{o}s, Ko and Rado \cite{EKR1961}.
We denote $[n]=\{1, 2, \dots, n\}$.  For $0\leq k \leq n$, let ${[n]\choose k}$ be the family of all $k$-element subsets of $[n]$.  A family  $\mathcal{A}$ of subsets of $[n]$ is  $k$-uniform if $\mathcal{A}\subseteq {[n]\choose k}$. A family $\mathcal{A}$ of sets is  {\em intersecting} if $A\cap B\ne \emptyset$ for any $A,B\in \mathcal{A}$. Half a century ago, Erd\H{o}s, Ko and Rado \cite{EKR1961} showed that  if $n> 2k$ and $\mathcal{A} \subseteq {[n] \choose k}$ is intersecting, then
$  |\mathcal{A}| \le {n-1 \choose k-1}$,
  with equality if and only if $\mathcal{A}$ consists of all $k$-subsets containing a fixed element.
Various intersection theorems were proved in the literature, and some of them were applied to other fields, e.g., the theoretical computer science and discrete geometry. For instance, the famous Ahlswede--Khachatrian theorem \cite{AK} that determines the maximum size of  a $k$-uniform family $\mathcal{A}$ where any two sets intersect in at least $t$ elements, which plays a crucial role in the hardness-of-approximation theorem for the `vertex cover' problem, proved by Dinur and Safra \cite{DS}.

Two families $\mathcal{A}$ and $\mathcal{B}$  are called {\em cross intersecting} if $A\cap B\ne \emptyset$ for any $A\in \mathcal{A}$ and $B\in \mathcal{B}$. In the case $\mathcal{A}=\mathcal{B}$, the problem for  cross intersecting families  reduces to that for intersecting families.
The families  $\mathcal{F}_1, \mathcal{F}_2,\dots, \mathcal{F}_t$  are called {\it non-empty pairwise cross intersecting} if every $\mathcal{F}_i$ is non-empty, and $\mathcal{F}_i$ and $\mathcal{F}_j$ are cross intersecting for any $ i\neq j $.
In 1967, Hilton and Milner  \cite{HM1967} determined the maximum sum of sizes of two cross intersecting families.

 \begin{theorem}[Hilton--Milner \cite{HM1967}]\label{HM}
Let $n$ and $k$ be positive integers with $n\geq 2k$.
 If $\mathcal{A}, \mathcal{B}\subseteq {[n]\choose k}$ are non-empty cross intersecting families, then
$$
|\mathcal{A}|+|\mathcal{B}|\leq {n\choose k} -{n-k\choose k}+1.
$$
\end{theorem}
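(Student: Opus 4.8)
The plan is to prove the bound by the compression (shifting) method, reducing to highly structured families and then exploiting that structure directly. First I would recall the $(i,j)$-shift $S_{ij}$ for $1\le i<j\le n$, together with the standard facts that it preserves the cross-intersecting property and leaves $|\mathcal{A}|$ and $|\mathcal{B}|$ unchanged. Applying all shifts repeatedly, I may assume both $\mathcal{A}$ and $\mathcal{B}$ are left-compressed (shifted); this costs nothing for the inequality, and non-emptiness is preserved. The key structural payoff is that a non-empty shifted $k$-uniform family necessarily contains $[k]=\{1,\dots,k\}$, the minimum $k$-set in the shifting order. Hence $[k]\in\mathcal{B}$, and since every $A\in\mathcal{A}$ must meet $[k]$, we get $\mathcal{A}\subseteq\{A\in\binom{[n]}{k}:A\cap[k]\neq\emptyset\}$, so that
$$|\mathcal{A}|\le\binom{n}{k}-\binom{n-k}{k}.$$
By symmetry $[k]\in\mathcal{A}$ and the same bound holds for $|\mathcal{B}|$.

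This immediately settles the boundary case: if $|\mathcal{B}|=1$ then $|\mathcal{A}|+|\mathcal{B}|\le\binom{n}{k}-\binom{n-k}{k}+1$, and symmetrically if $|\mathcal{A}|=1$. The real content of the theorem is therefore the \emph{trade-off} when $\min(|\mathcal{A}|,|\mathcal{B}|)\ge 2$: every set we add to $\mathcal{B}$ beyond $[k]$ must shrink the admissible $\mathcal{A}$, and the loss has to outpace the gain. To capture this exactly I would induct on $n$, splitting each family on the element $1$. Writing $\mathcal{A}(1)=\{A\setminus\{1\}:A\in\mathcal{A},\,1\in A\}$ and $\mathcal{A}(\bar 1)=\{A\in\mathcal{A}:1\notin A\}$ (and similarly for $\mathcal{B}$), all viewed over $[2,n]=\{2,\dots,n\}$, shiftedness yields the shadow containments $\partial\mathcal{A}(\bar 1)\subseteq\mathcal{A}(1)$ and $\partial\mathcal{B}(\bar 1)\subseteq\mathcal{B}(1)$, while the cross-intersecting property on $[n]$ descends to three subsystems on $[2,n]$: the pair $(\mathcal{A}(\bar 1),\mathcal{B}(\bar 1))$ of uniformity $(k,k)$, and the two mixed pairs $(\mathcal{A}(\bar 1),\mathcal{B}(1))$ and $(\mathcal{A}(1),\mathcal{B}(\bar 1))$ of uniformity $(k,k-1)$. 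Splitting
$$|\mathcal{A}|+|\mathcal{B}|=\bigl(|\mathcal{A}(1)|+|\mathcal{B}(1)|\bigr)+\bigl(|\mathcal{A}(\bar 1)|+|\mathcal{B}(\bar 1)|\bigr),$$
I would bound the two parenthesized sums by feeding the mixed subsystems into the induction hypothesis and combining with the shadow containments.

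Because the induction forces cross-intersecting families of \emph{different} uniformities, the honest move is to prove a more general two-parameter statement from the outset: for non-empty cross-intersecting $\mathcal{A}\subseteq\binom{[m]}{a}$, $\mathcal{B}\subseteq\binom{[m]}{b}$ with $m\ge a+b$, bound $|\mathcal{A}|+|\mathcal{B}|$, with Hilton--Milner the diagonal case $m=n$, $a=b=k$. The same compression-plus-splitting scheme should run, with the Theorem~\ref{HM} bound emerging as the diagonal specialization.

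The hard part will be the bookkeeping in the inductive step, not any single clever idea. I must track which of the sub-families $\mathcal{A}(1),\mathcal{A}(\bar 1),\mathcal{B}(1),\mathcal{B}(\bar 1)$ are allowed to be empty—the induction hypothesis needs non-emptiness, so each degenerate configuration has to be disposed of by hand—and then verify that the boundary arithmetic combining the $(k,k)$ and $(k,k-1)$ contributions reassembles exactly into $\binom{n}{k}-\binom{n-k}{k}+1$. I would also flag, as a cautionary point, that a one-shot application of the Kruskal--Katona theorem to the complement family $\{[n]\setminus B:B\in\mathcal{B}\}$ is \emph{not} tight here: its real-variable relaxation overshoots the target bound precisely because it fails to see the non-emptiness of $\mathcal{A}$, which is why the combinatorial splitting above—rather than a single shadow estimate—is needed to pin down the constant $+1$.
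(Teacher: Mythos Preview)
The paper does not prove this theorem: Theorem~\ref{HM} is stated as a classical result of Hilton and Milner and is cited from~\cite{HM1967} without proof, serving only as historical background for the cross-intersecting problem. There is therefore no proof in the paper to compare your proposal against.

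As a standalone sketch your approach is reasonable and standard: shifting to left-compressed families so that $[k]$ lies in both, then inducting on $n$ via the split on element~$1$, is one of the well-known routes to Hilton--Milner-type bounds. Your caveat that the induction forces a two-parameter statement (different uniformities $a,b$) is correct and is exactly the content of the Frankl--Tokushige extension (Theorem~\ref{FT1992} in the paper), so in effect you would be re-deriving that stronger result. The degenerate-subfamily bookkeeping you flag is genuinely where the work lies; nothing in your plan is wrong, but it remains a plan rather than a proof.
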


Extending Theorem \ref{HM},
 Frankl and Tokushige \cite{FT} proved the following result.

 \begin{theorem}[Frankl--Tokushige \cite{FT}]\label{FT1992}
Let $\mathcal{A}\subseteq {[n]\choose k}$ and $\mathcal{B}\subseteq {[n]\choose \ell}$ be non-empty cross intersecting families with $n\geq k+ \ell$ and $k\geq \ell $. Then
$$|\mathcal{A}|+|\mathcal{B}|\leq{n\choose k}-{n-\ell \choose k}+1. $$
\end{theorem}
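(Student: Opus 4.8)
The plan is to prove the Frankl–Tokushige theorem via the shifting (compression) technique, reducing the general case to so-called L-initial (lexicographic) families, and then to optimize directly.

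**Step 1: Reduce to compressed families.** First I would apply the standard left-compression operator $S_{ij}$ (for $i<j$) to both families simultaneously. The key fact is that simultaneous left-compression preserves the cross intersecting property and does not decrease $|\mathcal{A}|+|\mathcal{B}|$ (indeed it fixes both sizes). Iterating until stability, I may assume both $\mathcal{A}$ and $\mathcal{B}$ are left-compressed, and in fact I would push toward the stronger goal of assuming each is an \emph{initial segment} in the colex/lex order, i.e.\ an L-initial family. The reason this is legitimate is the classical result that for cross intersecting problems one can reduce to the case where $\mathcal{A}$ consists of the first $|\mathcal{A}|$ sets of ${[n]\choose k}$ and $\mathcal{B}$ the first $|\mathcal{B}|$ sets of ${[n]\choose \ell}$ in the lexicographic order, since the lex order is the extremal configuration for the cross intersecting constraint (this is essentially a Kruskal–Katona type statement).

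**Step 2: Translate the cross intersecting condition into a combinatorial constraint on the two initial segments.** Once both families are L-initial, cross intersection becomes a condition relating their "last" (largest) elements in lex order. Concretely, if $\mathcal{A}$ is the initial segment of size $a$ with maximal element $A_0$ and $\mathcal{B}$ the initial segment of size $b$ with maximal element $B_0$, then cross intersection holds iff $A_0\cap B_0\ne\emptyset$, and more usefully iff $B_0$ is disjoint from no member of $\mathcal{A}$. I would phrase this as: $\mathcal{B}$ can be as large as the number of $\ell$-sets meeting \emph{every} set in $\mathcal{A}$. This lets me set up the optimization: for a fixed $\mathcal{A}$, the maximum admissible $|\mathcal{B}|$ equals the number of $\ell$-sets intersecting all of $\mathcal{A}$, and I want to maximize the sum over all choices of $\mathcal{A}$.

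**Step 3: Optimize.** I expect the extremal configuration to be $\mathcal{A}=\{A: 1\in A\}$ (all $k$-sets through a fixed point) paired with $\mathcal{B}=\{1\}$-type minimal family, \emph{or} the reverse extreme where $\mathcal{A}$ is a single set. The bound ${n\choose k}-{n-\ell\choose k}+1$ suggests the extremal pair is: $\mathcal{A}$ all $k$-sets meeting a fixed $\ell$-set $L_0$ (there are ${n\choose k}-{n-\ell\choose k}$ of these) together with $\mathcal{B}=\{L_0\}$ a single $\ell$-set, giving the "$+1$". I would verify that among all L-initial pairs this single-variable tradeoff — as $|\mathcal{B}|$ grows from $1$, the forced shrinkage of $|\mathcal{A}|$ outpaces the gain — is monotone, so the optimum sits at the boundary $|\mathcal{B}|=1$. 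The condition $n\ge k+\ell$ is exactly what guarantees ${n-\ell\choose k}$ is a genuine count (nonempty complement regime) and that the monotonicity goes the right way.

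The main obstacle I anticipate is Step 2–3: rigorously justifying that the lexicographic initial segments are extremal (the reduction is clean for the property but the \emph{optimization} over sizes requires a careful convexity/monotonicity argument), and then controlling the tradeoff between $|\mathcal{A}|$ and $|\mathcal{B}|$ precisely enough to pin down that the boundary case $|\mathcal{B}|=1$ (equivalently $\mathcal{A}$ a full star relative to a fixed $\ell$-set) is the unique maximizer. Handling the equality/uniqueness claim—showing no other L-initial configuration ties the bound when $n>k+\ell$—will be the delicate part, since when $n=k+\ell$ degeneracies can appear and extra extremal families may emerge.
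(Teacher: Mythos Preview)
The paper does not prove this theorem; it is quoted from Frankl and Tokushige \cite{FT} as background and later invoked as a black box (in the $t=3$ subcase of the proof of Theorem~\ref{main1}). So there is no ``paper's own proof'' to compare against.

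Your overall strategy is the standard one and matches the original Frankl--Tokushige argument: reduce to L-initial families via Kruskal--Katona (this is exactly Theorem~\ref{kk} in the present paper), then optimize a one-parameter tradeoff, with the extremum at $|\mathcal{B}|=1$ and $\mathcal{A}$ equal to all $k$-sets meeting a fixed $\ell$-set.

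One concrete inaccuracy: in Step~2 you write that for L-initial $\mathcal{A},\mathcal{B}$ with last members $A_0,B_0$, ``cross intersection holds iff $A_0\cap B_0\ne\emptyset$.'' This is false. For example, with $n=5$, $k=\ell=2$, take $\mathcal{A}$ to be the first six $2$-sets in lex (so $A_0=\{2,4\}$) and $\mathcal{B}$ the first three (so $B_0=\{1,4\}$); then $A_0\cap B_0=\{4\}$ but $\{2,4\}\cap\{1,3\}=\emptyset$. The correct characterization is the partner/last-element-intersection criterion the paper formalizes in Proposition~\ref{p2.7} and Lemma~\ref{fact2.5}: $\mathcal{L}(Q,\ell)$ is maximal cross intersecting to $\mathcal{L}(P,k)$ precisely when $P\setminus P^{\mathrm t}$ and $Q\setminus Q^{\mathrm t}$ are partners (i.e., strongly intersect at their common maximum). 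Once you replace your ad hoc criterion with this, your Step~3 optimization goes through: fixing $\mathcal{B}=\mathcal{L}(T,\ell)$ and taking $\mathcal{A}$ maximal, the sum $|\mathcal{A}|+|\mathcal{B}|$ is a function of $T$ alone, and the monotonicity you anticipate (each extra $\ell$-set added to $\mathcal{B}$ forces the removal of at least one $k$-set from $\mathcal{A}$, with equality only in degenerate cases) is exactly the content of Proposition~\ref{clm23} specialized to $t=2$.
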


We refer the interested readers to \cite{BF2022, Fra2024, FT1998, FW2023, FW2024-EUJC, WFL2025} for more related results for cross intersecting families.
In 2020, Shi, Frankl and Qian \cite{SFQ2020} proved a weighted extension of Theorem \ref{FT1992} by bounding the maximum of $|\mathcal{A}| + c|\mathcal{B}|$ for a constant $c>0$.
 It is worth noting that an analogous weighted conclusion was independently obtained by Kupavskii \cite{Kupavskii2018}. Invoking the weighted version, Shi, Frankl and Qian \cite{SFQ2020} showed the following result for multi-families.

  \begin{theorem}[Shi--Frankl--Qian \cite{SFQ2020}] \label{SFQ}
Let $n,k$ and $t$ be positive integers with $n\ge 2k$ and $t\ge 2$.
If $\mathcal{F}_1, \mathcal{F}_2, \ldots ,\mathcal{F}_t \subseteq {[n] \choose k}$ are non-empty pairwise cross intersecting families, then
\[ \sum_{i=1}^t |\mathcal{F}_i|  \le
\max\left\{ {n \choose k} - {n-k \choose k} + t-1, t{n-1 \choose k-1}  \right\}. \]
\end{theorem}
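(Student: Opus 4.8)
The plan is to deduce this multi-family bound from the two-family setting by a weighting-and-averaging argument, exactly as the phrase ``Invoking the weighted version'' in the excerpt suggests. First I would relabel the families so that $\mathcal{F}_1$ has maximum size, i.e. $f_1 := |\mathcal{F}_1| \ge f_i := |\mathcal{F}_i|$ for every $i$. The completely degenerate case $f_1 = 1$ (every family is a single $k$-set) gives $\sum_i f_i = t$, which lies below both candidate terms, so I may assume $f_1 \ge 2$. The essential input is the weighted extension of Theorem~\ref{FT1992} due to Shi--Frankl--Qian (and, independently, Kupavskii): for non-empty cross intersecting $\mathcal{A},\mathcal{B}\subseteq\binom{[n]}{k}$ with $n\ge 2k$ and a weight $c>0$, the quantity $|\mathcal{A}|+c|\mathcal{B}|$ is maximized by one of the standard extremal pairs, so it is governed by an explicit maximum.

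Next I would run the averaging. For each $j \in \{2,\dots,t\}$ the pair $(\mathcal{F}_1,\mathcal{F}_j)$ is cross intersecting, so I apply the weighted inequality with weight $1$ on $\mathcal{F}_1$ and weight $c = t-1$ on $\mathcal{F}_j$. The two relevant optimizers are $\mathcal{F}_1=\{A:A\cap K\ne\emptyset\}$ together with $\mathcal{F}_j=\{K\}$, of value $\binom{n}{k}-\binom{n-k}{k}+(t-1)$, and $\mathcal{F}_1,\mathcal{F}_j$ both equal to the star through a common point, of value $(1+c)\binom{n-1}{k-1}=t\binom{n-1}{k-1}$. Writing $M$ for the maximum of these two numbers, this yields $f_1+(t-1)f_j\le M$ for every such $j$. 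Summing the $t-1$ inequalities, the left-hand side collapses:
\[
\sum_{j=2}^{t}\bigl(f_1+(t-1)f_j\bigr)=(t-1)f_1+(t-1)\sum_{j=2}^{t}f_j=(t-1)\sum_{i=1}^{t}f_i\le (t-1)M,
\]
and dividing by $t-1$ gives $\sum_{i=1}^{t}f_i\le M$, which is precisely the claimed bound. The decisive choice is $c=t-1$: it is the unique weight that equalizes the coefficients of $f_1$ and of $\sum_{j\ge 2}f_j$ after summation, and simultaneously it is the weight at which the two extremal pairs reproduce exactly the two terms inside the $\max$.

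The hard part will be justifying that $f_1+(t-1)f_j$ is bounded by the maximum of just those two numbers. Since $c=t-1$ may be much larger than $1$, the heavier-weighted family is $\mathcal{F}_j$, so a third optimizer a priori competes: $\mathcal{F}_j$ near-maximal (the family of all $k$-sets meeting a fixed $K$) while $\mathcal{F}_1$ is forced down to the single set $\{K\}$, of value $1+(t-1)\bigl(\binom{n}{k}-\binom{n-k}{k}\bigr)$. This branch must be ruled out, and it is ruled out precisely by the maximality of $\mathcal{F}_1$: because $f_1\ge f_j$, the family $\mathcal{F}_1$ cannot collapse to a singleton while $\mathcal{F}_j$ is near-maximal. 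Making this exclusion rigorous---equivalently, verifying that throughout the admissible range the weighted inequality stays governed by the first two extremal pairs once the side constraint $f_1\ge f_j$ is imposed---is the main technical hurdle, and it is exactly what the compression/shifting analysis behind the weighted theorem delivers; after that, the averaging above finishes the argument. The few boundary situations ($n=2k$, $k=1$, or $\binom{n-1}{k-1}=1$) I would dispatch separately, each being immediate.
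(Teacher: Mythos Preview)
The paper does not give its own proof of Theorem~\ref{SFQ}; it is quoted from \cite{SFQ2020}, with only the one-line description that Shi, Frankl and Qian obtained it by ``invoking the weighted version'' of the two-family inequality. So there is no argument in the paper to compare against beyond that description, and your outline---pick the largest family, apply a weighted two-family bound with $c=t-1$ to each pair $(\mathcal{F}_1,\mathcal{F}_j)$, and average---matches it exactly.

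The one place where your write-up is not quite tight is the dismissal of the third extremal branch. The bare constraint $f_1\ge f_j$ does not plug directly into any stated form of the weighted theorem; what those theorems actually assume is a lower bound of the type $f_1\ge\binom{n-1}{k-1}$ (see Theorem~\ref{thm1.9} in this paper for the version from \cite{HP+}). The clean repair is a one-line case split: if $f_1<\binom{n-1}{k-1}$ then every $f_j\le f_1<\binom{n-1}{k-1}$ and $\sum_i f_i<t\binom{n-1}{k-1}\le M$ trivially; if $f_1\ge\binom{n-1}{k-1}$ then Theorem~\ref{thm1.9} with two families, $d_1=1$, $d_2=t-1$, $i=1$ gives precisely $f_1+(t-1)f_j\le M$ with only the two desired terms in the maximum, and your averaging finishes. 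With that adjustment the proposal is complete and faithful to the SFQ approach.
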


For given integers $n, t$ and  $k_1\geq \dots\geq k_t$,
 we say that $(\mathcal{F}_1, \dots, \mathcal{F}_t)$ is an {\it $(n, k_1, \dots, k_t)$-cross intersecting system} if  $\mathcal{F}_1\subseteq {[n]\choose k_1}, \dots, \mathcal{F}_t\subseteq {[n]\choose k_t}$ are non-empty pairwise cross intersecting families.
Let $ M(n, k_1, \dots, k_t)$ be the maximum of $\sum_{i=1}^t |\mathcal{F}_i|$ over all $(n,k_1,\ldots ,k_t)$-cross intersecting systems.
Moreover, a system $(\mathcal{F}_1, \dots, \mathcal{F}_t)$  is {\em extremal} if $\sum_{i=1}^t|\mathcal{F}_i|=M(n, k_1, \dots, k_t)$.

A unified extension of Theorems \ref{FT1992} and \ref{SFQ} is to determine the value of $M(n, k_1,  \dots, k_t)$ for any integers $k_1\geq \dots \geq k_t$.
In the range $n\geq k_1+k_2$, the value of $M(n, k_1,  \dots, k_t)$ was determined  by Huang, Peng and Wang \cite{HPW},  and independently by Zhang and Feng \cite{ZF2023}.
\begin{theorem}[See \cite{HPW}, \cite{ZF2023}]\label{HP}
Let $\mathcal{F}_1\subseteq{[n]\choose k_1}, \mathcal{F}_2\subseteq{[n]\choose k_2}, \dots, \mathcal{F}_t\subseteq{[n]\choose k_t}$ be non-empty pairwise cross intersecting families with $t\geq 2$, $k_1\geq k_2\geq \cdots \geq k_t$. If $n\geq k_1+k_2$, then 
\begin{equation*}\label{value}
\sum_{i=1}^t{|\mathcal{F}_i|}\leq \textup{max} \left\{ 
\sum_{i=1}^t{n-1\choose k_i-1},\,\,  
{n\choose k_1}-{n-k_t\choose k_1}+\sum_{i=2}^t{{n-k_t\choose k_i-k_t}}\right\}.
\end{equation*}
Moreover, the extremal families are characterized as follows. 
\begin{itemize}
  \item[(i)] For $n>k_1+k_2$ or $n=k_1+k_2>k_1+k_t$, the equality holds if and only if
 either there is a set $T\in {[n] \choose k_t}$ such that $\mathcal{F}_1=\{F\in {[n]\choose k_1}\colon  F\cap T\ne\emptyset\}$ and $\mathcal{F}_j=\{F\in {[n]\choose k_j}: T\subseteq F\}$ for every $j\in [2,t]$; or  there exists $a\in [n]$ such that $\mathcal{F}_j=\left\{F\in{[n]\choose k_j}\colon  a\in F\right\}$ for every $j\in [t]$.

  \item[(ii)] For $t=2$ and $n=k_1+k_2$,  the equality holds if and only if $\mathcal{F}_1={[n]\choose k_1}\setminus \{[n]\setminus A\colon A\in \mathcal{F}_2\}$.

  \item[(iii)] For $t\geq 3$ and $n=k_1+k_2=k_1+k_t$, if $k_1>k_2$, then the equality holds if and only if $\mathcal{F}_1, \mathcal{F}_2,\ldots, \mathcal{F}_t$ are full stars with the same center; if $k_1=k_2$,  then the equality holds if and only if $\mathcal{F}_1=\mathcal{F}_2=\dots=\mathcal{F}_t$, and $\mathcal{F}_1$ is any intersecting family with size $\binom{n-1}{k_1-1}$.
\end{itemize}
\end{theorem}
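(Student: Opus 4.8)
The plan is to lean on the two standard pillars of such extremal problems, compression and reduction to the two-family case, but to organise them around a single-family optimisation. First I would invoke the usual shifting machinery: applying a shift $S_{ab}$ simultaneously to all of $\mathcal{F}_1,\dots,\mathcal{F}_t$ preserves each cardinality $|\mathcal{F}_i|$, preserves non-emptiness, and preserves the pairwise cross-intersecting property, so I may assume every $\mathcal{F}_i$ is shifted (i.e. L-initial). The payoff is that for a shifted $\mathcal{F}_1$ the ``best response'' at each uniformity $k_j$ becomes transparent.

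The key reduction is then to collapse everything onto $\mathcal{F}_1$. For fixed $\mathcal{F}_1$ write $N_j(\mathcal{F}_1)=\{B\in\binom{[n]}{k_j}\colon B\cap A\neq\emptyset \text{ for all } A\in\mathcal{F}_1\}$ for the largest $k_j$-family cross-intersecting $\mathcal{F}_1$. Since each $\mathcal{F}_j$ cross-intersects $\mathcal{F}_1$ we have $\mathcal{F}_j\subseteq N_j(\mathcal{F}_1)$, whence $\sum_{i=1}^t|\mathcal{F}_i|\le g(\mathcal{F}_1):=|\mathcal{F}_1|+\sum_{j=2}^t|N_j(\mathcal{F}_1)|$, while the non-emptiness of $\mathcal{F}_t$ imposes the constraint $N_t(\mathcal{F}_1)\neq\emptyset$. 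This reduces the theorem to the single-family statement $g(\mathcal{F}_1)\le\max\{\dots\}$ over all shifted non-empty $\mathcal{F}_1$ with $N_t(\mathcal{F}_1)\neq\emptyset$. Crucially, the two terms in the theorem are exactly $g$ evaluated on the two principal families $\mathcal{M}_s:=\{A\colon A\cap[s]\neq\emptyset\}$: the star $\mathcal{M}_1$ yields $\sum_i\binom{n-1}{k_i-1}$, while $\mathcal{M}_{k_t}$ (using $n\ge k_1+k_j$ to compute $N_j(\mathcal{M}_s)=\{B\colon[s]\subseteq B\}$) yields the kernel term.

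The optimisation then splits into two sub-claims. First, on principal families $g(\mathcal{M}_s)=\binom{n}{k_1}-\binom{n-s}{k_1}+\sum_{j=2}^t\binom{n-s}{k_j-s}$ for $1\le s\le k_t$, and I would show this is maximised at an endpoint $s\in\{1,k_t\}$ via a second-difference (convexity-in-$s$) estimate; this is where $n\ge k_1+k_2$ is used decisively, and it is also where the method must break for $n<k_1+k_2$, consistent with the paper's framing. Second, the genuinely hard step: every shifted $\mathcal{F}_1$ admits an $s$ with $g(\mathcal{F}_1)\le g(\mathcal{M}_s)$, so that no shifted family beats the principal ones. The natural tool here is the weighted Frankl--Tokushige inequality of Shi--Frankl--Qian \cite{SFQ2020} and Kupavskii \cite{Kupavskii2018} applied to the pairs $(\mathcal{F}_1,N_j(\mathcal{F}_1))$; the obstruction is that $\mathcal{F}_1$ is shared across all $t-1$ terms and the $N_j$ live at different uniformities, so a single weight does not suffice and one must instead run a compression directly on $\mathcal{F}_1$ toward some $\mathcal{M}_s$ while monitoring $g$, or else combine the weighted bounds by hand. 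I expect this reduction to principal families to be the main obstacle.

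Finally, the characterisation. In the generic range ($n>k_1+k_2$, or $n=k_1+k_2$ with $k_2>k_t$) the inequalities above are strict except at $s\in\{1,k_t\}$, and tracing equality back through $\mathcal{F}_j=N_j(\mathcal{F}_1)$ recovers precisely the common-star and kernel-$T$ families of (i). The delicate part is the boundary $n=k_1+k_2$, where the second difference in the first sub-claim degenerates and many intermediate configurations tie; there cross-intersection reduces to a statement about complements ($B\cap A\neq\emptyset \iff B\neq[n]\setminus A$ when $|A|+|B|=n$), which for $t=2$ produces the full complementary family of (ii), and for $t\ge 3$ with $k_2=\cdots=k_t$ forces the equal-uniformity analysis of (iii), splitting on $k_1>k_2$ versus $k_1=k_2$ (the latter reducing to the Erd\H{o}s--Ko--Rado \cite{EKR1961} description of maximum intersecting families). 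Handling this degeneracy, rather than merely the generic strict case, is the second source of difficulty.
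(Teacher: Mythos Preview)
The paper does not prove this theorem; Theorem~\ref{HP} is quoted from \cite{HPW} and \cite{ZF2023}. What the paper does expose (in Sections~1.1 and~\ref{sec4}, citing \cite{HP+}) is the skeleton of those proofs, and it differs from your plan in a way that reveals a genuine gap in yours.

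Your reduction to the single-family functional
\[
g(\mathcal{F}_1)=|\mathcal{F}_1|+\sum_{j=2}^t|N_j(\mathcal{F}_1)|
\]
discards the pairwise cross-intersection constraints among $\mathcal{F}_2,\dots,\mathcal{F}_t$, and this loss is fatal: $g$ can exceed the claimed maximum. Take $t=3$, $n=8$, $(k_1,k_2,k_3)=(5,3,2)$, and $\mathcal{F}_1=\{[5]\}$, which is L-initial and has $N_3(\mathcal{F}_1)\neq\emptyset$. Then $|N_2(\mathcal{F}_1)|=\binom{8}{3}-\binom{3}{3}=55$ and $|N_3(\mathcal{F}_1)|=\binom{8}{2}-\binom{3}{2}=25$, so $g(\mathcal{F}_1)=81$. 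But $\sum_i\binom{7}{k_i-1}=35+21+7=63$ and $\binom{8}{5}-\binom{6}{5}+\binom{6}{1}+\binom{6}{0}=57$, so the theorem's right-hand side is $63<81$. Your ``genuinely hard step'' (every shifted $\mathcal{F}_1$ is dominated by some $\mathcal{M}_s$) is therefore simply false as stated; the issue is precisely that $N_2(\mathcal{F}_1)$ and $N_3(\mathcal{F}_1)$ are \emph{not} cross-intersecting here (e.g.\ $\{1,6,7\}$ and $\{2,8\}$), so $g(\mathcal{F}_1)$ is an upper bound on nothing real.

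The published argument neutralises this by first establishing that in an extremal system one may assume $|\mathcal{F}_1|\geq\binom{n-1}{k_1-1}$: if every $|\mathcal{F}_i|<\binom{n-1}{k_i-1}$ the sum is already below $\lambda_1$, and a separate comparison (Proposition~\ref{7-17-2} / Theorem~\ref{thm1.9}) lets one take the large family to be the top-uniformity one. Under this restriction the ID $R$ of $\mathcal{F}_1$ lies in $\mathbb{R}_1=\{R:\{1,n-k_1+2,\dots,n\}\prec R\prec\{k_t,n-k_1+2,\dots,n\}\}$, every $N_j(\mathcal{F}_1)$ then consists only of sets containing $1$ (Remark~\ref{remark2.20+}), so the $N_j$ are automatically pairwise cross-intersecting and $g(\mathcal{F}_1)$ \emph{is} the achievable optimum for that $\mathcal{F}_1$. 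On this restricted domain one proves directly that the local increments $\alpha(R,R')-\beta(R,R')$ (Definition~\ref{ab-}, Proposition~\ref{clm23}) make $-g$ unimodal in $R$, pinning the maximum to the two endpoints of $\mathbb{R}_1$, which are exactly your $\mathcal{M}_1$ and $\mathcal{M}_{k_t}$. There is no separate ``reduce to principal families'' step: once the domain is cut down correctly, the endpoint principal families emerge from the unimodality analysis, not from a prior comparison. Your plan can be repaired by inserting this range restriction before you pass to $g$, but you then also need the side argument showing that the large family can be taken to be $\mathcal{F}_1$ rather than some $\mathcal{F}_i$ with $i\ge2$.
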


This resolves an open problem of Shi, Frankl and Qian \cite[Problem 4.3]{SFQ2020}. More general,  a weighted version for multi-families was recently investigated by Huang and Peng \cite{HP+}.

Actually, the most natural constraint for this problem is $n\geq k_1+k_t$, instead of $n\ge k_1 + k_2$.  Indeed, if $n< k_1+k_t$, then for any extremal  $(n, k_1, \dots, k_t)$-cross intersecting system $(\mathcal{F}_1, \dots, \mathcal{F}_t)$, we must have $\mathcal{F}_1={[n]\choose k_1}$.
In this case, the problem of determining $M(n,k_1,k_2,\ldots ,k_t)$ reduces to that of determining $M(n, k_2, \dots, k_t)$.
However, when the constraint $n \ge k_1 + k_2$ is relaxed to $n \ge k_1 + k_t$, the methods previously established in \cite{HPW, ZF2023, HP+, SFQ2020} cease to be applicable.

For cross intersecting families $\mathcal{A}\subseteq {[n]\choose k}$ and
$\mathcal{B}\subseteq {[n]\choose \ell}$,
 we say that $\mathcal{A}$ and $\mathcal{B}$ are  {\it freely cross intersecting} whenever $n<k+\ell$; otherwise, we say that $\mathcal{A}$ and $\mathcal{B}$ are  {\it non-freely cross intersecting}.
 In addition, we say that the cross intersecting system $(\mathcal{F}_1,\ldots , \mathcal{F}_t)$  is of {\em mixed type} if it contains at least two families that are freely cross intersecting, and also contains at least two families that are non-freely cross intersecting; otherwise, we say that $(\mathcal{F}_1,\ldots , \mathcal{F}_t)$  is of {\em non-mixed type}.

The aforementioned references considered the intersection problem only for the non-mixed type systems, which require the condition $ n \ge k_1+k_2$.
It would be interesting  to study this problem for mixed type systems admitting the condition $n< k_1 + k_2$.
In this paper, we give the first mixed type result in the range $k_1+k_3\leq n <k_1+k_2$.
Given families $\mathcal{F}_1\subseteq {[n] \choose k_1}, \ldots , \mathcal{F}_t \subseteq {[n] \choose k_t}$ with
$k_1\geq k_2 \ge \cdots \geq k_t$, if $k_1+k_3\leq n <k_1+k_2$, then only
$\mathcal{F}_1$ and $\mathcal{F}_2$ are freely cross intersecting in the mixed type system $(\mathcal{F}_1,\ldots ,\mathcal{F}_t)$.
We begin with the following examples.

\begin{example}\label{con1}
Let $k_1\geq k_2\geq\dots\geq k_t$ and $k_1+k_3\leq n <k_1+k_2$. For each $i\in [t]$,  let
$$\mathcal{G}_i=\left\{ G\in {[n]\choose k_i}: 1\in G \right\}.$$
\end{example}

\begin{example}\label{con2}
Let $k_1\geq k_2\geq\dots\geq k_t$ and $k_1+k_3\leq n <k_1+k_2$.  For $i\in \{1,2\}$, let
$$\mathcal{H}_i=\left\{ H\in {[n]\choose k_i}: H\cap [k_t]\ne\emptyset \right\},$$
and for every $i\in[3, t]$, let
$$\mathcal{H}_i=\left\{ H\in {[n]\choose k_i}: [k_t]\subseteq H\right\}.$$
\end{example}

In the sequel, we shall show that under the constraint $k_1+k_3\leq n<k_1+k_2$, then an  $(n, k_1, \dots, k_t)$-cross intersecting system $(\mathcal{F}_1,\ldots ,\mathcal{F}_t)$ is extremal  if and only if it is isomorphic to the systems in Examples \ref{con1} or  \ref{con2}, with one more exception possibly.

\begin{theorem}[Main result]  \label{main1}
Let $t\geq 3$, $k_1\geq k_2\geq \cdots \geq k_t$ and $k_1+k_3\leq n<k_1+k_2$. If $\mathcal{F}_1\subseteq{[n]\choose k_1}, \mathcal{F}_2\subseteq{[n]\choose k_2}, \dots, \mathcal{F}_t\subseteq{[n]\choose k_t}$ are non-empty pairwise cross intersecting families,  then
$$
\sum_{i=1}^t{|\mathcal{F}_i|}\leq \textup{max} \left\{\sum_{i=1}^t{n-1\choose k_i-1}, \,\,\sum_{i=1}^2\left({n\choose k_i}-{n-k_t\choose k_i}\right)+\sum_{i=3}^t{{n-k_t\choose k_i-k_t}}\right\}.
$$
If $t=4$, $k_1=k_2$, $k_3=k_4$ and $n=k_1+k_3$, then the equality holds if and only if $\mathcal{F}_3=\mathcal{F}_4$ is an intersecting family, and $\mathcal{F}_1=\mathcal{F}_2= {[n]\choose k_1}\setminus \overline{\mathcal{F}_3}$. Otherwise,
the equality holds if and only if $(\mathcal{F}_1, \dots, \mathcal{F}_t)$ is isomorphic to $(\mathcal{G}_1, \dots, \mathcal{G}_t)$ in Example \ref{con1} or $(\mathcal{H}_1, \dots, \mathcal{H}_t)$ in Example \ref{con2}.
\end{theorem}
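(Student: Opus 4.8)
The plan is to compress every family to an L-initial family, to prove that on the ``maximal frontier'' the objective becomes a convex function of a \emph{single} parameter, and then to optimize that function and read off the extremal configurations.

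First I would record the structure forced by $k_1+k_3\le n<k_1+k_2$. Since $k_1+k_2>n$, any $A\in\binom{[n]}{k_1}$ and $B\in\binom{[n]}{k_2}$ satisfy $|A|+|B|>n$ and hence meet, so $\mathcal{F}_1$ and $\mathcal{F}_2$ are automatically cross intersecting (the unique freely cross intersecting pair), whereas every other pair $\{i,j\}$ obeys $k_i+k_j\le k_1+k_3\le n$ and is genuinely constrained. The range is moreover nonempty only when $k_3<k_2$, so $\mathcal{F}_1,\mathcal{F}_2$ are the two ``large'' families and $\mathcal{F}_3,\dots,\mathcal{F}_t$ the ``small'' ones, the latter forming an $(n,k_3,\dots,k_t)$-system with $n\ge k_3+k_4$ that is governed by Theorem \ref{HP}. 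Crucially, the largest admissible $\mathcal{F}_1,\mathcal{F}_2$ depend only on $\bigcup_{j\ge 3}\mathcal{F}_j$ (each must meet every small set), so the whole extremal system is driven by the small families. I would then apply the simultaneous left-compression $S_{ij}$ to all $t$ families, which preserves each $|\mathcal{F}_i|$ and every pairwise cross-intersection, and reduce via the standard Frankl-type L-compression for cross intersecting pairs to the case where each $\mathcal{F}_i$ is the L-initial family $\mathcal{L}(k_i,m_i)$ with $m_i=|\mathcal{F}_i|$.

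With all families L-initial, the core of the argument is to understand, for two L-initial families, exactly when they are cross intersecting and when they are \emph{maximal to each other}, i.e.\ when neither size can be raised without destroying cross-intersection. Here I would use the paper's devices: to the lex-largest member of one family one associates its \emph{corresponding $k$-set} (its \emph{$k$-partner}), the critical set whose presence dictates which sets the partner family must contain, while \emph{parity} governs a case split in the resulting size thresholds. These yield, for each constrained pair $\{i,j\}$, an explicit relation between $m_i$ and $m_j$ holding with equality precisely on the maximal frontier. I would then prove the single-variable reduction announced in the abstract: along this frontier the whole tuple $(m_1,\dots,m_t)$ is controlled by one parameter, and the extremal L-initial systems are the \emph{core systems}
$$
\mathcal{F}_1,\mathcal{F}_2=\bigl\{H:H\cap[s]\ne\emptyset\bigr\},\qquad \mathcal{F}_i=\bigl\{H:[s]\subseteq H\bigr\}\ (3\le i\le t),\qquad 1\le s\le k_t ,
$$
whose objective is the single-variable function
$$
\sum_{i=1}^{t}|\mathcal{F}_i|=\Phi(s):=\sum_{i=1}^{2}\Bigl(\binom{n}{k_i}-\binom{n-s}{k_i}\Bigr)+\sum_{i=3}^{t}\binom{n-s}{k_i-s}.
$$
Next I would treat $\Phi$ as a function of the integer variable $s\in\{1,\dots,k_t\}$ and show it is convex, by verifying the second difference $\Phi(s+1)-2\Phi(s)+\Phi(s-1)\ge 0$ (a routine binomial computation using $n\ge k_1+k_3$), so that its maximum occurs at an endpoint. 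At $s=1$ the ``meeting'' and ``containing'' descriptions coincide and $\Phi(1)=\sum_{i=1}^{t}\binom{n-1}{k_i-1}$, which is Example \ref{con1}; at $s=k_t$ one gets the value of Example \ref{con2}. Hence $\sum_{i}|\mathcal{F}_i|\le\max\{\Phi(1),\Phi(k_t)\}$, the stated bound.

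Finally, for the equality characterization I would analyze when convexity is strict. When $\Phi$ is strictly convex the global maximum is attained only at $s\in\{1,k_t\}$, so the extremal L-initial systems are exactly Examples \ref{con1} and \ref{con2}; reversing the compression steps and using that $S_{ij}$ strictly decreases the objective on any non-compressed configuration shows every extremal system is isomorphic to one of them. The exceptional regime $t=4,\ k_1=k_2,\ k_3=k_4,\ n=k_1+k_3$ is precisely where the two endpoints of $\Phi$ tie, so compression is no longer strictly objective-decreasing; there the boundary sub-system $(\mathcal{F}_3,\mathcal{F}_4)$ sits at $n=k_1+k_3$ with $k_3=k_4$, forcing $\mathcal{F}_1=\mathcal{F}_2=\binom{[n]}{k_1}\setminus\overline{\mathcal{F}_3}$ while $\mathcal{F}_3=\mathcal{F}_4$ may be \emph{any} intersecting family of size $\binom{n-1}{k_3-1}$ (not merely a star), exactly paralleling the boundary phenomena of Theorem \ref{HP}(ii)--(iii); this non-L-initial family has to be identified by hand. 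I expect the principal obstacle to be the single-variable reduction of the third paragraph: proving that the maximal frontier of a pairwise L-initial system is genuinely one-dimensional and that its objective is the clean convex $\Phi$. This is where the $k$-partner, corresponding $k$-set, and parity machinery is indispensable, since the exact threshold relating $m_i$ and $m_j$ is a nonlinear binomial expression that must be controlled simultaneously across all $\binom{t}{2}-1$ constrained pairs, and separating the strictly convex case from the tie that produces the exception is the technical crux.
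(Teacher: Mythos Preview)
Your plan contains a genuine gap at the ``single-variable reduction'' step, and the convexity claim that follows is in fact false.

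After the L-initial reduction and the maximal-frontier analysis, the correct single parameter is \emph{not} an integer $s\in[1,k_t]$ but the lex-ID $I_1\in\mathcal{R}_1=\{R\in\binom{[n]}{k_1}:\{1,n-k_1+2,\dots,n\}\prec R\prec\{k_t,n-k_1+2,\dots,n\}\}$; this is the content of the paper's Proposition~\ref{7-18-1}. Your ``core systems'' are only the $k_t$ special points $I_1=\{s,n-k_1+2,\dots,n\}$ inside this much larger set. For instance $I_1=\{1,3\}\cup[n-k_1+3,n]$ lies on the maximal frontier (it is the $k_1$-partner of its own $k_3$-partner) but is not a core system; nothing in your outline excludes it. The paper handles the full range $\mathcal{R}_1$ via Lemma~\ref{claim4} and Theorem~\ref{7-19-2}, a delicate step-by-step comparison of $\gamma$ and $\delta$ as $I_1$ moves through $\mathcal{R}_1$ in lex order---this is the real work, and it is not a convexity argument.

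Moreover, $\Phi$ is not convex even on its restricted domain. Take $n=10$, $k_1=k_2=6$, $k_3=k_4=3$ (so $k_1+k_3=9\le 10<12=k_1+k_2$, and $n>k_1+k_3$ avoids the exceptional case). Then $\Phi(1)=324$, $\Phi(2)=380$, $\Phi(3)=408$, with second difference $-28<0$; the function is concave here, and the maximum is at an endpoint only because $\Phi$ happens to be monotone on $[1,3]$, a fact your proposed second-difference computation would not establish. Two smaller points: shift operators $S_{ij}$ preserve all sizes, so they never ``strictly decrease the objective''---the equality characterization requires a separate argument (the paper feeds the sizes back into Theorem~\ref{HP}); and in the exceptional case $t=4$, $k_1=k_2$, $k_3=k_4$, $n=k_1+k_3$, the intersecting family $\mathcal{F}_3=\mathcal{F}_4$ may have \emph{any} size, not necessarily $\binom{n-1}{k_3-1}$.
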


It is worth mentioning that the contribution of this paper not only lies in studying the case where $n<k_1+k_2$, but also provides a feasible approach toward ultimately resolving the most natural constraint $n\geq k_1+k_t$.
The key ingredient in our method is based on
utilizing the results involving the `$k$-partner', `parity' and `corresponding $k$-set', which are of independent interest.

\subsection{Sketch of our strategy}

To determine $M(n, k_1, \dots, k_t)$, a result of Kruskal and Katona (Theorem \ref{kk})  allows us to consider only families $\mathcal{F}_i$ whose members are the first $|\mathcal{F}_i|$  in lexicographic order (we call them  L-initial families) in an extremal $(n, k_1, \dots, k_t)$-cross intersecting system $(\mathcal{F}_1, \dots, \mathcal{F}_t)$.
   For an L-initial family $\mathcal{F}$, we call the last member (under lexicographic order) of $\mathcal{F}$ the {\em ID} of $\mathcal{F}$.
   In \cite{HP+}, we  came up with a new method different from \cite{HPW, ZF2023} and  bounded $\sum_{i=1}^t{c_i|\mathcal{F}_i|}$ (where $c_i$, $1\le i\le t$ are positive constants) by a function  $f_i(R)$, where $R$ is the ID of  $\mathcal{F}_i$,  and showed that $-f_i(R)$ has   `unimodality'. The method and results in our previous work \cite{HP+} provide some foundation for this paper, and we need new ingredients to overcome the difficulty raised by weakening the condition on $n$.  Since the condition on $n$ is relaxed to $k_1+k_t\leq n <k_1+k_2$, and at least two of $\mathcal{F}_1, \dots, \mathcal{F}_t$  are freely cross intersecting.
 When we try to  bound $\sum_{i=1}^t{|\mathcal{F}_i|}$ by a function, there are at least two free variables $I_1$ (the ID of $\mathcal{F}_1$) and $I_2$ (the ID of $\mathcal{F}_2$). This causes more difficulty to analyze properties of the corresponding function, comparing to the problem in \cite{HP+}. To overcome this difficulty,  we introduce new concepts `$k$-partner' , `parity' and `corresponding $k$-set', develop some rules to determine whether  a pair of L-initial cross intersecting families are maximal to each other, and
  prove one crucial property that
  if ($\mathcal{F}_1, \mathcal{F}_2, \dots, \mathcal{F}_t$) is  an extremal L-initial with IDs $I_1, I_2$ of $\mathcal{F}_1, \mathcal{F}_2$, respectively, then $I_2$  is the corresponding $k_2$-set of $I_1$. This discovery allows us to bound $\sum_{i=1}^t{|\mathcal{F}_i|}$ by a single variable function $f(I_1)$. Another crucial and challenge part is  to bound $f(I_1)$. Comparing to the function in  \cite{HP+}, we need to overcome more difficulties in dealing with  function $f(I_1)$ since there are more `mysterious' terms to be taken care of.
  We take advantage of some properties of function $f_i(R)$  obtained in \cite{HP+} and come up with some new strategies in estimating the change $f(I'_1)-f(I_1)$ as the ID of $\mathcal{F}_1$ increases from $I_1$ to $I'_1$.
The new concepts introduced in this paper—including the `$k$-partner', `parity', and `corresponding $k$-set'—along with the developed rules for determining whether a pair of $L$-initial cross intersecting families are maximal with respect to each other, will serve as a crucial foundation for the solution to the most general constraint $n\ge k_1+k_t$ . Key results, such as Lemma \ref{claim4},
are also essential to this framework.

\subsection{Organization of the paper}

In Section \ref{sec2}, we introduce the new concepts `$k$-partner' and `parity', and give some results  to determine whether  a pair of L-initial cross intersecting families are maximal to each other.
These results form a crucial foundation for addressing the general case where $n \ge k_1 + k_t$.
Section \ref{sec4} summarizes key results from our earlier work on the non-mixed type, which will be utilized in the present paper.
 In Section \ref{sec3}, we present the proof of Theorem \ref{main1}, assuming the validity of Lemma \ref{claim4}.
 Finally, in the last section, we provide the proof of Lemma \ref{claim4}, which will also play an essential role  in subsequent work \cite{hpgeneral}.

\section{Preliminaries}  \label{sec2}
In this section, we introduce new concepts `$k$-partner' and `parity', develop some rules to determine whether a pair of  L-initial cross intersecting families are maximal to each other.

When we write a set $A=\{a_1, a_2, \ldots, a_s\}\subseteq [n]$, we always assume that $a_1<a_2<\ldots<a_s$. We define the interval $[a, b]$ as follows:
 If $a< b$, then $[a, b]=\{a, a+1, \dots, b\}$.
If $a=b$, then $[a, b]=\{a\}=\{b\}$. If $a>b$, then $[a, b]=\emptyset$. For $A\subseteq [n]$, let $\max A$  and $\min A$ denote  the largest element and the smallest element of $A$, respectively.
 Let $A$ and $B$ be finite subsets of the set of positive integers. We say that $A$ is no more than $B$ in {\it lexicographic (lex for short) order}, denoted by $A\prec B$,
 if either $A\supseteq B$ or $\min(A\setminus B) < \min(B\setminus A)$. In particular, $A\prec A$. 
Let $\mathcal{L}([n], r, k)$ denote the first $r$ subsets in ${[n]\choose k}$ in the lex order. Given a set $R$, denote
\begin{equation}\label{NEW1}
\mathcal{L}([n], R, k)=\Big\{F\in {[n]\choose k}: F\prec R\Big\}.
\end{equation}
Whenever the underlying set $[n]$ is clear, we shall ignore it and write $\mathcal{L}(R, k)$, $\mathcal{L}(r, k)$ for short.  Let $\mathcal{F}\subseteq {[n]\choose k}$ be a family, we say $\mathcal{F}$ is {\it L-initial} if $\mathcal{F}=\mathcal{L}([n], |\mathcal{F}|, k)$. We call the last member of   $\mathcal{F}$ {\it ID} of $\mathcal{F}$.

The well-known Kruskal--Katona theorem \cite{KK1, KK2} will allow us to consider only L-initial families. (If we only consider the size of an $(n, k_1, \dots, k_t)$-cross intersecting system.) An equivalent formulation of this result was given in \cite{KK3, KK4} as follows.

\begin{theorem}[Kruskal--Katona theorem \cite{KK3, KK4}]\label{kk}
For $\mathcal{A}\subseteq {[n]\choose k}$ and $\mathcal{B}\subseteq {[n]\choose l}$, if $\mathcal{A}$ and $\mathcal{B}$ are cross intersecting, then $\mathcal{L}(|\mathcal{A}|, k)$ and $\mathcal{L}(|\mathcal{B}|, l)$ are cross intersecting as well.
\end{theorem}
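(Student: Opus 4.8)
The plan is to deduce this cross intersecting formulation from the classical \emph{shadow} form of the Kruskal--Katona theorem \cite{KK1, KK2} via a complementation argument. First I would dispose of the trivial regime: if $k+\ell>n$, then $|A\cap B|\ge k+\ell-n>0$ for every $A\in\binom{[n]}{k}$ and $B\in\binom{[n]}{\ell}$, so \emph{all} pairs of families are cross intersecting, and the statement is vacuous. Hence assume $k+\ell\le n$. The main reformulation is to pass to complements: writing $\overline{\mathcal{B}}=\{[n]\setminus B:B\in\mathcal{B}\}\subseteq\binom{[n]}{n-\ell}$, a $k$-set $A$ is disjoint from some $B\in\mathcal{B}$ exactly when $A\subseteq\overline{B}$ for some $\overline B\in\overline{\mathcal B}$. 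Thus $\mathcal A$ and $\mathcal B$ are cross intersecting if and only if $\mathcal A$ avoids the family $\partial_k(\overline{\mathcal B})$ consisting of all $k$-element subsets of members of $\overline{\mathcal B}$ (the down-shadow of $\overline{\mathcal B}$ to level $k$).

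The engine is then Kruskal--Katona in two guises. First (size minimization): among all $(n-\ell)$-uniform families of size $|\mathcal B|$, the down-shadow to level $k$ is smallest for a colex-initial segment. Second (structural): this minimizing shadow is itself an initial segment. To port both facts into the lex order $\prec$ used here, I would invoke the ground-set reversal $i\mapsto n+1-i$, which is a symmetry of $\binom{[n]}{\,\cdot\,}$ that preserves all shadow cardinalities and identifies colex-initial segments with lex-final segments of the same size. Setting $\mathcal{B}_0=\mathcal{L}(|\mathcal B|,\ell)$, one checks that $\overline{\mathcal B_0}$ is precisely the lex-final segment of $\binom{[n]}{n-\ell}$ of size $|\mathcal B|$ (complementation reverses $\prec$), so by the first fact $\overline{\mathcal B_0}$ minimizes $|\partial_k(\cdot)|$; and by the second fact $\partial_k(\overline{\mathcal B_0})$ is a lex-final segment of $\binom{[n]}{k}$, whence its complement $\mathcal A^{*}:=\binom{[n]}{k}\setminus\partial_k(\overline{\mathcal B_0})$ is a lex-initial segment.

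To finish, since $\mathcal A$ avoids $\partial_k(\overline{\mathcal B})$ and by minimality $|\partial_k(\overline{\mathcal B})|\ge|\partial_k(\overline{\mathcal B_0})|$, I get $|\mathcal A|\le\binom{n}{k}-|\partial_k(\overline{\mathcal B})|\le\binom{n}{k}-|\partial_k(\overline{\mathcal B_0})|=|\mathcal A^{*}|$. Because $\mathcal A^{*}$ is a lex-initial segment of size at least $|\mathcal A|$ and lex-initial segments are nested, $\mathcal A^{*}$ contains $\mathcal A_0=\mathcal L(|\mathcal A|,k)$; hence $\mathcal A_0\cap\partial_k(\overline{\mathcal B_0})=\emptyset$, which is exactly the statement that $\mathcal{L}(|\mathcal A|,k)$ and $\mathcal{L}(|\mathcal B|,\ell)$ are cross intersecting. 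The step I expect to be the main obstacle is the order bookkeeping: matching the paper's lex order $\prec$ with the colex order in which Kruskal--Katona is cleanest, and verifying that both the shadow-minimality and the ``shadow of an initial segment is again an initial segment'' properties transfer faithfully under complementation and reversal. Equivalently, the crux can be isolated as the single structural claim that \emph{the maximal family cross intersecting with a fixed L-initial family is itself L-initial} (i.e.\ $\mathcal A^{*}$ above is a lex-initial segment), after which the counting argument closes the proof immediately.
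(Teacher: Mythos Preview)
Your derivation is correct and follows the standard route: the complementation identity ``$\mathcal{A},\mathcal{B}$ cross intersecting $\Longleftrightarrow \mathcal{A}\cap\partial_k(\overline{\mathcal{B}})=\emptyset$'' together with the shadow form of Kruskal--Katona (colex-initial segments minimize shadows, and their shadows are again colex-initial), transported to the lex order by the ground-set reversal and complementation. The bookkeeping you flag as the obstacle is fine: complementation reverses $\prec$, and $i\mapsto n+1-i$ interchanges lex-final with colex-initial, so $\overline{\mathcal{L}(|\mathcal{B}|,\ell)}$ is indeed the extremal family for the shadow bound and its $k$-shadow is lex-final.

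There is nothing to compare against here, however: the paper does not prove Theorem~\ref{kk}. It is stated as a known reformulation of Kruskal--Katona, attributed to \cite{KK3,KK4}, and used as a black box throughout. Your argument is precisely the classical one behind those references, so if anything you have supplied the proof the paper omits.
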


Let $F$ and $H$ be two subsets of $[n]$ with size $|F|=f$ and $|H|=h$. We say that $F$ and $H$ {\it strongly intersect} at their last element if there is an element $q$ such that $F\cap H=\{q\}$ and $F\cup H=[q]$. We also say $F$ is the {\it partner} of $H$ (or $H$ is the {\it partner} of $F$).
In \cite{HP+}, the authors proved the following proposition.
\begin{proposition}\label{p2.7}
Let $a, b, n$ be positive integers and $a+b\leq n$. For $P\subseteq [n]$ with $|P|\leq a$, let $Q$ be the partner of $P$. Then $\mathcal{L}(Q, b)$ is the maximum  $b$-uniform family that is cross intersecting to $\mathcal{L}(P, a)$.
\end{proposition}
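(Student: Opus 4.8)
The plan is to split the statement into two goals: that $\mathcal{L}(P,a)$ and $\mathcal{L}(Q,b)$ are cross intersecting, and that no $b$-uniform family cross intersecting with $\mathcal{L}(P,a)$ can be larger. Throughout I would write $p=|P|$ and $q=\max P$, so that by the definition of the partner we have $P\cap Q=\{q\}$ and $P\cup Q=[q]$; in particular $[q]\setminus P=Q\setminus\{q\}$ and $[q]\setminus Q=P\setminus\{q\}$, i.e. every element of $[q]$ lies in exactly one of $P\setminus\{q\}$, $Q\setminus\{q\}$, or equals the shared element $q$. I would also single out $G_0$, the lexicographically smallest $b$-subset of $[n]\setminus P$ (the first $b$ elements of $[n]\setminus P$). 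The entire argument revolves around $G_0$: it will turn out to be the unique ``first forbidden'' $b$-set, sitting exactly one step beyond $\mathcal{L}(Q,b)$.

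First I would prove directly that $\mathcal{L}(P,a)$ and $\mathcal{L}(Q,b)$ are cross intersecting. Suppose not, and take $F\in\binom{[n]}{a}$ with $F\prec P$ and $G\in\binom{[n]}{b}$ with $G\prec Q$ such that $F\cap G=\emptyset$. I would case on how the two relations $F\prec P$ and $G\prec Q$ are realized, tracking the first-difference elements $x=\min(F\setminus P)$, $y=\min(P\setminus F)$, $u=\min(G\setminus Q)$, $v=\min(Q\setminus G)$. The decisive point is the partition $[q]=P\cup Q$: any small witness element lies in $P$ or in $Q$, hence (via $F\supseteq P$-type containments or the inequalities $x<y$, $u<v$) in $F$ or in $G$, while $F\cap G=\emptyset$ forces $q$, and then $x$ or $u$, onto the ``wrong side''. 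In each case this yields a common element of $F$ and $G$, the desired contradiction. This elementary but delicate case analysis is the technical heart of the proof.

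Next I would locate $Q$ relative to $G_0$, proving $\mathcal{L}(Q,b)=\{G\in\binom{[n]}{b}:G\prec G_0,\ G\neq G_0\}$, equivalently that $G_0$ is the immediate lex-successor of the ID of $\mathcal{L}(Q,b)$. Writing $[q]\setminus P=\{c_1<\dots<c_r\}$ with $r=q-p$, so that $Q=\{c_1,\dots,c_r,q\}$ and $[n]\setminus P=\{c_1,\dots,c_r,q+1,\dots,n\}$, a short comparison (splitting on $b\le r$ versus $b>r$) shows $Q\prec G_0$ while $G_0\not\prec Q$, and then that every $b$-set strictly below $G_0$ lies below $Q$. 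I would also record the easy fact that $G_0$ is forbidden: since $n\ge a+b$ we have $|[n]\setminus(P\cup G_0)|=n-p-b\ge a-p$, so $P$ can be extended by $a-p$ elements of $[n]\setminus(P\cup G_0)$ to an $a$-set $F_0\supseteq P$ with $F_0\prec P$ and $F_0\cap G_0=\emptyset$, whence $F_0\in\mathcal{L}(P,a)$ is disjoint from $G_0$.

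Finally I would deduce maximality. Let $\mathcal{B}\subseteq\binom{[n]}{b}$ be any family cross intersecting with $\mathcal{L}(P,a)$. By the Kruskal--Katona theorem (Theorem \ref{kk}), $\mathcal{L}(|\mathcal{B}|,b)$ is cross intersecting with $\mathcal{L}(P,a)$ as well. If $|\mathcal{B}|>|\mathcal{L}(Q,b)|$, then the initial segment $\mathcal{L}(|\mathcal{B}|,b)$ strictly contains $\mathcal{L}(Q,b)$ and therefore contains its successor $G_0$; but $G_0\cap F_0=\emptyset$ with $F_0\in\mathcal{L}(P,a)$ contradicts cross intersection. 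Hence $|\mathcal{B}|\le|\mathcal{L}(Q,b)|$, and together with the first step this shows $\mathcal{L}(Q,b)$ is the maximum $b$-uniform family cross intersecting with $\mathcal{L}(P,a)$. I expect the cross-intersecting case analysis to be the main obstacle, with the order computation identifying $G_0$ as the successor of $\mathcal{L}(Q,b)$ a close second; both are elementary but demand careful bookkeeping of the partition $[q]=P\cup Q$ and of the reflexive boundary cases $F\supseteq P$ and $G\supseteq Q$.
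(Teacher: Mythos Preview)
The paper does not give its own proof of this proposition; it is quoted verbatim from the authors' earlier work \cite{HP+}, so there is no in-paper argument to compare your attempt against. That said, your proposal is a correct self-contained proof. The case analysis for cross intersection goes through exactly as you outline once one exploits the partition $[q]=P\cup Q$ with $P\cap Q=\{q\}$: in the hardest sub-case $F\not\supseteq P$, $G\not\supseteq Q$ one finds $x\in Q$ and $u\in P$, and then $F\cap G=\emptyset$ forces $x\in Q\setminus G$ (so $x\ge v>u$) and $u\in P\setminus F$ (so $u\ge y>x$), a contradiction. Your identification of $G_0$, the lex-first $b$-subset of $[n]\setminus P$, as the immediate successor of the ID of $\mathcal{L}(Q,b)$ is also correct in both regimes $b\le r$ and $b>r$, and together with the disjoint pair $(F_0,G_0)$ and Theorem~\ref{kk} this gives the maximality. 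For context, the paper separately quotes the Frankl--Kupavskii result (Proposition~\ref{FK+}), which asserts two-sided maximality of $(\mathcal{L}(P,a),\mathcal{L}(Q,b))$ under the additional hypothesis $|Q|\le b$; combined with Kruskal--Katona that would also yield the proposition in that range, whereas your direct argument places no restriction on $|Q|$ and therefore covers the statement in full.
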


In \cite{HP+},  we worked on non-mixed type: Let $t\geq 2$, $k_1\geq k_2\geq \cdots \geq k_t$ and $n\geq k_1+k_2$  and families $\mathcal{A}_1\subseteq{[n]\choose k_1}, \mathcal{A}_2\subseteq{[n]\choose k_2}, \dots, \mathcal{A}_t\subseteq{[n]\choose k_t}$ be non-empty pairwise cross intersecting. Let $R$ be the ID of $\mathcal{A}_1$, and $T$  be the partner of $R$. In view of \cite{HP+}, one important ingredient is that by Proposition \ref{p2.7}, $\sum_{i=1}^t{|\mathcal{A}_i|}$ can be bounded by a function of $R$ as following.
\begin{align*}
f(R)=\sum_{j=1}^t|\mathcal{A}_j|\leq |\mathcal{L}(R, k_1)| +\sum_{j=2}^t |\mathcal{L}(T, k_j)| .
\end{align*}

Let $\mathcal{F}_1, \dots, \mathcal{F}_t$ be the families described in Theorem \ref{main1}.
By Theorem \ref{kk},  to prove the quantitative part of Theorem \ref{main1} we may  also assume that $\mathcal{F}_i$ is L-initial, that is, for each $i\in [t]$, we have $\mathcal{F}_i=\mathcal{L}(I_i, k_i)$, where $I_i$ is the ID of $\mathcal{F}_i$.
However,  the condition on $n$ is relaxed to $k_1+k_3\leq n <k_1+k_2$, so
 $\mathcal{F}_1\subseteq {[n]\choose k_1}$ and $\mathcal{F}_2\subseteq {[n]\choose k_2}$ are  freely cross intersecting. When we try to  bound $\sum_{i=1}^t{|\mathcal{F}_i|}$ by a function, there are two free variables $I_1$ (the ID of $\mathcal{F}_1$) and $I_2$ (the ID of $\mathcal{F}_2$). This causes more difficulty to analyze properties of the corresponding function, comparing to the problem in \cite{HP+}.  To overcome this difficulty,  some new ideas are needed.

   \subsection{Partner and Parity}

 Now, we introduce new concepts `$k$-partner' and `parity', and develop some criteria to determine whether  a pair of L-initial cross intersecting families are maximal to each other.

 To start with, we need to define the following concepts.

\begin{itemize}
\item {\bf $\mathcal{F}$ is maximal to $\mathcal{G}$.} Let $\mathcal{F}\subseteq {[n]\choose f}$ and  $\mathcal{G}\subseteq {[n]\choose g}$ be cross intersecting. We say that {\em $\mathcal{F}$ is maximal to $\mathcal{G}$} if $\mathcal{F}'\subseteq {[n]\choose f}$ and $\mathcal{G}$ are cross intersecting with $\mathcal{F}\subseteq \mathcal{F}'$, then $\mathcal{F}'= \mathcal{F}$.

\item {\bf $(\mathcal{F}, \mathcal{G})$ is maximal.}
The pair $(\mathcal{F}, \mathcal{G})$ is maximal if and only if $\mathcal{F}$ and $\mathcal{G}$ are maximal with respect to each other.

\item {\bf $F$ is maximal to $G$.} Let $F$ and $G$ be two subsets of $[n]$. We  say that {\em $F$ is maximal to $G$} if there are two L-initial families $\mathcal{F}\subseteq{[n]\choose |F|}$ and $\mathcal{G}\subseteq{[n]\choose |G|}$ with IDs $F$ and $G$ respectively such that $\mathcal{F}$ is maximal to $\mathcal{G}$.

\item {\bf $(F, G)$ is maximal.}
The pair $({F}, {G})$ is maximal if and only if ${F}$ and ${G}$ are maximal with respect to each other.

\item {\bf maximal pair families.} We say two families $\mathcal{A}_1$ and $\mathcal{A}_2$ are \emph{maximal pair families} if $|\mathcal{A}_1|=|\mathcal{A}_2|$ and for every $A_1\in \mathcal{A}_1$, there is a unique $A_2\in \mathcal{A}_2$ such that $(A_1, A_2)$ is maximal.

\item Let $F=\{x_1, x_2, \dots, x_k\}\subseteq [n]$. We denote
\begin{align*}
&\ell(F)=
\begin{cases}
\max \{x: [n-x+1, n]\subseteq F\}, & \text{if $\max F=n$};\\
0, & \text{if $\max F<n$}.
\end{cases}\\
&F^{\mathrm{t}}=[n-\ell(F)+1, n].
\end{align*}

\item {\bf $k$-partner.} Let $F$ and $H$ be two subsets of $[n]$ and partners to each other with  $|F|=f$ and $|H|=h$. Let $k\leq n-f$ be an integer, we define the {\it $k$-partner} $K$ of $F$ as follows. For $k=h$, let $K=H$.  If $k>h$, then let $K= H\cup \{n-k+h+1, \dots, n\}$.  If $k< h$, then let $K$ be the last $k$-set in ${[n]\choose k}$ such that $K \prec H$, in other words, there is no $k$-set $K'$ satisfying $K\precneqq K'\precneqq H$. We claim that $|K|=k$. To see this, we only need to check the case $k>h$. Indeed, since $F$ and $H$ intersect at their last element, $n'=\max H=f+h-1<n-k+h$. This implies $K^{\rm t}=\{n-k+h+1, \dots, n\}$ and $|K|=| H\cup \{n-k+h+1, \dots, n\}|=k$.

\item {\bf parity.}
Let $h_1$ and $h_2$ be positive integers with $h_1\leq h_2$, $H_1$ and $H_2$ be subsets of $[n]$ with sizes $h_1$ and $h_2$, respectively. We say $H_1$ is the {\it $h_1$-parity} of $H_2$ (or $H_2$ is the $h_2$-parity of $H_1$) if $H_1\setminus H_1^{\rm t}=H_2\setminus H_2^{\rm t}$ and $\ell(H_2)-\ell(H_1)=h_2-h_1$.
\end{itemize}

From the definition of $k$-partner, we have the following fact and remark.

\begin{fact}\label{f2.11}
Let $a, b, c, n$ be positive integers, $a\geq b$ and $n\geq a+c$, and let $C$ be a $c$-subset of $[n]$. Suppose that $A$ is the $a$-partner of $C$ and $B$ is the $b$-partner of $C$, then $B\prec A$ or $A$ is the $a$-parity of $B$.
\end{fact}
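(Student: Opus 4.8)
The plan is to reduce everything to a single clean comparison in the lex order. Let $D$ be the ordinary partner of $C$ and put $d=|D|$; since $C$ and $D$ strongly intersect at their last element we have $\max D=\max C=c+d-1=:q$. The first and main step is the unifying identity that, for each of $k=a$ and $k=b$, the $k$-partner $P_k$ of $C$ is exactly the lex-largest $k$-set $K$ with $K\prec D$. For $k\le d$ this is immediate from the definition of $k$-partner (and $P_d=D$). For $k>d$ one has $d\le k\le n-c$, which forces $q=c+d-1\le n-1$, so the appended block $\{n-k+d+1,\dots,n\}$ sits strictly above $\max D$ and is disjoint from $D$; hence $P_k=D\cup\{n-k+d+1,\dots,n\}$ is a superset of $D$, and it dominates every non-superset $F\prec D$, because for such an $F$ the element $m:=\min(F\setminus D)$ satisfies $m<\min(D\setminus F)\le q$ and is therefore too small to be a top element, so $m=\min(P_k\triangle F)\in F$ and thus $F\prec P_k$. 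This identifies $A=P_a$ and $B=P_b$ with the lex-largest $a$- and $b$-sets below $D$, and reduces the Fact to: for $a\ge b$, either $B\prec A$ or $A$ is the $a$-parity of $B$.

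Next I would record the explicit shape of the lex-largest $k$-set below $D$. Writing $D=\{d_1<\dots<d_d\}$ and $d_0=0$, it is built by matching the longest admissible prefix of $D$ and then deviating: for $k\ge d$ it is $D\cup\{n-(k-d)+1,\dots,n\}$, while for $k<d$ it is $\{d_1,\dots,d_{j_k},\,d_{j_k+1}-1\}\cup\{n-(k-j_k-1)+1,\dots,n\}$, where $j_k$ is the largest index $j\le k-1$ with $d_{j+1}>d_j+1$. In both regimes the hypothesis $n\ge a+c$ guarantees the appended block is disjoint from and lies above the rest, so it is exactly the top tail $P_k^{\mathrm t}$ (for $k<d$ this uses that $d_{j_k+1},\dots,d_k$ are consecutive in $D$ by maximality of $j_k$). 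Consequently the \emph{body} $P_k\setminus P_k^{\mathrm t}$ equals $D$ when $k\ge d$ and equals $\{d_1,\dots,d_{j_k},d_{j_k+1}-1\}$ when $k<d$. A short monotonicity observation is that $j_a\ge j_b$ whenever $a\ge b$, since enlarging $k$ only enlarges the range $\{0,\dots,k-1\}$ in which the matched prefix is sought.

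The dichotomy then falls out by comparing the two bodies. If $A$ and $B$ have the same body, they differ only in the length of their top tails and $\ell(A)-\ell(B)=a-b$, which is precisely the assertion that $A$ is the $a$-parity of $B$; this covers the regime $a\ge b\ge d$ (common body $D$) and the regime $a,b<d$ with $j_a=j_b$. If instead the bodies differ, I claim $B\prec A$: in each such configuration ($a\ge d>b$, or $a,b<d$ with $j_a>j_b$) the set $A$ reproduces the elements $d_1,\dots,d_{j_b+1}$ of $D$, whereas $B$ carries the strictly smaller element $d_{j_b+1}-1$ in that position while agreeing with $A$ below it; therefore $d_{j_b+1}-1$ is the least element of $A\triangle B$ and lies in $B$, giving $B\prec A$.

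The step I expect to be the main obstacle is the mixed regime $a>d>b$, where $A$ is obtained by appending top elements to all of $D$ while $B$ is obtained by deviating inside $D$; here one must verify that $A$ does not accidentally contain the gap element $d_{j_b+1}-1$ (it cannot, as $d_{j_b}<d_{j_b+1}-1<d_{j_b+1}$ are consecutive elements of $D$ that $A$ inherits) and that the comparison really is decided at that position. The remaining work is bookkeeping: checking throughout that the appended blocks are genuine top tails (this is exactly where $n\ge a+c$, via $c+k\le n$, is used), confirming the parity identity $\ell(A)-\ell(B)=a-b$ on the nose, and disposing of the degenerate endpoints $a=b$, $a=d$, and $b=d$ separately.
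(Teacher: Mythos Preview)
Your argument is correct. The paper itself provides no proof of this Fact; it is simply asserted to follow ``from the definition of $k$-partner.'' Your explicit body/tail decomposition of the $k$-partner---$D$ plus a top block when $k\ge d$, and a truncated-and-decremented prefix of $D$ plus a top block when $k<d$---together with the monotonicity of $j_k$ in $k$, is precisely the kind of unpacking the paper leaves implicit, and it yields the dichotomy cleanly. The only loose thread in your writeup is that, for $k>d$, you verify $P_k$ lex-dominates the non-supersets of $D$ but not the other $k$-supersets; this is easy (the lex-maximal $k$-superset of $D$ is $D$ together with the $k-d$ largest elements of $[n]\setminus D$, and since $\max D<n-k+d+1$ these are exactly the appended top block), and in any case your subsequent casework works directly from the defining formula for $P_k$ rather than from the unifying identity.
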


\begin{remark}\label{r2.6}
Let $F\subseteq [n]$ with $|F|=f$ and $k\leq n-f$. Suppose that  $H$ is the partner of $F$, and $K$ is the k-partner of $F$, then we have $\mathcal{L}(H, k)=\mathcal{L}(K, k)$.
\end{remark}

By Proposition \ref{p2.7} and Remark \ref{r2.6}, we have the following fact.
\begin{fact}\label{f2.8}
Let $a, b, n$ be positive integers and $n\geq a+b$. For  $A\subseteq [n]$ with $|A|=a$, let $B$ be the $b$-partner of $A$, then
 $\mathcal{L}(B, b)$ and $\mathcal{L}(A, a)$ are cross intersecting, moreover,
  $\mathcal{L}(B, b)$ is maximal to $\mathcal{L}(A, a)$.
\end{fact}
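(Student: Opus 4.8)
The plan is to obtain Fact \ref{f2.8} as an almost immediate consequence of Proposition \ref{p2.7} together with Remark \ref{r2.6}. First I would fix $A\subseteq [n]$ with $|A|=a$ and let $H$ denote the (ordinary) partner of $A$. This partner exists and is uniquely determined: setting $q=\max A$ forces $A\cap H=\{q\}$ and $A\cup H=[q]$, so that $H=([q]\setminus A)\cup\{q\}$. Since the hypothesis $n\geq a+b$ gives $b\leq n-a$, the $b$-partner $B$ of $A$ is likewise well defined by the construction in the definition of $k$-partner, and it is built from this same $H$.

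Next I would apply Proposition \ref{p2.7} with $P=A$, which is legitimate because $|P|=a\leq a$ and the required inequality $a+b\leq n$ holds. Its partner is exactly $Q=H$, so the proposition tells us that $\mathcal{L}(H,b)$ is the maximum $b$-uniform family cross intersecting to $\mathcal{L}(A,a)$; in particular $\mathcal{L}(H,b)$ and $\mathcal{L}(A,a)$ are cross intersecting. To re-express $\mathcal{L}(H,b)$ in the form demanded by the statement, I would invoke Remark \ref{r2.6} with $F=A$, so that $f=a$ and $k=b\leq n-a$: since $H$ is the partner of $A$ and $B$ is the $b$-partner of $A$, the remark yields $\mathcal{L}(H,b)=\mathcal{L}(B,b)$. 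Substituting this identity turns the conclusion of Proposition \ref{p2.7} into the statement that $\mathcal{L}(B,b)$ is the maximum $b$-uniform family cross intersecting to $\mathcal{L}(A,a)$, which already gives the first assertion of the fact.

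It then remains only to upgrade ``maximum (in size)'' to ``maximal (under inclusion)'', which is the sole genuinely new step. Suppose $\mathcal{F}'\subseteq {[n]\choose b}$ is cross intersecting to $\mathcal{L}(A,a)$ and satisfies $\mathcal{L}(B,b)\subseteq \mathcal{F}'$. The maximality in size furnished by Proposition \ref{p2.7} gives $|\mathcal{F}'|\leq |\mathcal{L}(B,b)|$, while the containment gives $|\mathcal{F}'|\geq |\mathcal{L}(B,b)|$; hence $|\mathcal{F}'|=|\mathcal{L}(B,b)|$ and therefore $\mathcal{F}'=\mathcal{L}(B,b)$, which is exactly the definition of $\mathcal{L}(B,b)$ being maximal to $\mathcal{L}(A,a)$. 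I expect the whole argument to be short, with no real obstacle beyond correctly matching the hypotheses of Proposition \ref{p2.7} and Remark \ref{r2.6} to the present setting and noting the trivial size-versus-inclusion comparison above.
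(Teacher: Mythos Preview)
Your proposal is correct and follows essentially the same route as the paper, which simply states that Fact \ref{f2.8} follows from Proposition \ref{p2.7} and Remark \ref{r2.6}. Your only addition is the explicit (and trivial) observation that ``maximum in size'' implies ``maximal under inclusion'', which the paper leaves implicit.
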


Note that  families $\mathcal{L}(A, a)$ and $\mathcal{L}(B, b)$ which mentioned in Fact \ref{f2.8} may not be maximal cross intersecting, since we don't know whether $\mathcal{L}(A, a)$ is maximal to $\mathcal{L}(B, b)$. For example, let $n=9$, $a=3$, $b=4$ and $A=\{2, 4, 7\}$. Then the $b$-partner of $A$ is $\{1, 3, 4, 9\}$. Although $\mathcal{L}(\{1, 3, 4, 9\}, 4)$ is maximal to $\mathcal{L}(\{2, 4, 7\}, 3)$, $\mathcal{L}(\{1, 3, 4, 9\}, 4)$ and $\mathcal{L}(\{2, 4, 7\}, 3)$ are not maximal cross intersecting families since $\mathcal{L}(\{2, 4, 7\}, 3)\subsetneq\mathcal{L}(\{2, 4, 9\}, 3)$, and $\mathcal{L}(\{2, 4, 9\}, 3)$ and $\mathcal{L}(\{1, 3, 4, 9\}, 4)$ are cross intersecting families.
\begin{fact}\label{7-21-2}
Let $a, b, n$ be positive integers and $n\geq a+b$.
Let $\mathcal{L}(A, a)$ and $\mathcal{L}(B, b)$ be cross intersecting families.
If $\{1, n-a+2, \dots, n\}\prec A$, then $B\prec \{1, n-b+2, \dots, n\}$.
\end{fact}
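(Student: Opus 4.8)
The plan is to translate both lexicographic hypotheses into structural statements about \emph{stars}, and then argue by contradiction. First I would observe that $\{1, n-a+2, \dots, n\}$ is the lexicographically largest $a$-set containing the element $1$: once $1$ is fixed as the minimum, the remaining $a-1$ coordinates are pushed as high as possible. Consequently $\mathcal{L}(\{1,n-a+2,\dots,n\}, a)$ is exactly the star $\mathcal{S}_a := \{F \in \binom{[n]}{a} : 1 \in F\}$, and under the order convention of Section \ref{sec2} the hypothesis $\{1,n-a+2,\dots,n\} \prec A$ says precisely that $\mathcal{S}_a \subseteq \mathcal{L}(A,a)$. Dually, the desired conclusion $B \prec \{1,n-b+2,\dots,n\}$ is equivalent to $\mathcal{L}(B,b) \subseteq \mathcal{S}_b := \{F \in \binom{[n]}{b} : 1 \in F\}$, because among $b$-sets those lying strictly above $\{1,n-b+2,\dots,n\}$ in lex order are exactly the ones omitting $1$ (any set containing $1$ precedes any set not containing $1$). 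This dictionary reduces the Fact to the intuitive claim: if $\mathcal{L}(A,a)$ contains the whole star $\mathcal{S}_a$ and is cross intersecting with $\mathcal{L}(B,b)$, then $\mathcal{L}(B,b)$ lies inside the star $\mathcal{S}_b$.

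With this in hand I would prove the contrapositive. Suppose $B \not\prec \{1,n-b+2,\dots,n\}$; then $B$ is strictly above the lex-largest $b$-set through $1$, so by the observation above $1 \notin B$. Since $B \in \mathcal{L}(B,b)$ and $\mathcal{S}_a \subseteq \mathcal{L}(A,a)$, it suffices to exhibit an $a$-set $A'$ with $1 \in A'$ and $A' \cap B = \emptyset$: such an $A'$ belongs to $\mathcal{S}_a \subseteq \mathcal{L}(A,a)$ yet is disjoint from $B \in \mathcal{L}(B,b)$, contradicting that $\mathcal{L}(A,a)$ and $\mathcal{L}(B,b)$ are cross intersecting. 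The construction of $A'$ is the crux, and it is precisely here that the bound $n \ge a+b$ is used: since $1 \notin B$ and $|B| = b$, the set $[2,n] \setminus B$ has $(n-1)-b \ge a-1$ elements, so I may pick any $(a-1)$-subset $S \subseteq [2,n]\setminus B$ and put $A' = \{1\} \cup S$. Then $|A'|=a$, $1\in A'$, and $A'\cap B = \emptyset$, as required, completing the contradiction.

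I expect no serious difficulty in the combinatorial heart—the counting step guaranteeing enough room to complete $\{1\}$ to an $a$-set disjoint from $B$ is elementary once $n \ge a+b$ is invoked. The only genuine care lies in the bookkeeping with the (inclusion-flavoured) order convention: one must verify rigorously that, for each $k$, the $k$-sets containing $1$ form an initial segment terminating exactly at $\{1,n-k+2,\dots,n\}$, and that strict lex-superiority over this set forces the omission of $1$. Both verifications follow directly from the definition of $\prec$ and the fact that $\min(F\setminus G)=1$ whenever $1\in F$ and $1\notin G$, so I would record them as a short preliminary observation before running the main contradiction.
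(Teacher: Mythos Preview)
Your argument is correct. The translation of the two lexicographic conditions into the star containments $\mathcal{S}_a \subseteq \mathcal{L}(A,a)$ and $\mathcal{L}(B,b)\subseteq\mathcal{S}_b$ is accurate under the paper's order convention, the deduction $1\notin B$ from $B\not\prec\{1,n-b+2,\dots,n\}$ is valid, and the counting step $|[2,n]\setminus B|=n-1-b\ge a-1$ is exactly where $n\ge a+b$ enters; the disjoint $A'=\{1\}\cup S$ then yields the desired contradiction.

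As for comparison: the paper states this result as a \emph{Fact} and provides no proof, so there is nothing to compare your approach against. Your write-up supplies precisely the kind of short verification the authors presumably had in mind and omitted as routine.
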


\begin{fact}\label{fact2.4+}
Let $F\subseteq [n]$ with $|F|=f$ and $k\leq n-f$. Suppose that $K$ and $K'$ are the $k$-partners of $F$ and $F\setminus F^{\rm t}$, respectively, then  $K=K'$.
\end{fact}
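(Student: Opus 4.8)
The plan is to reduce the set-equality $K=K'$ to an equality of two lex-initial \emph{families}, and then verify the latter by a first-difference analysis in the lex order in which the hypothesis $k\le n-f$ plays the decisive role. First I would dispose of the trivial case $\max F<n$: here $\ell(F)=0$, so $F^{\mathrm t}=\emptyset$, $F\setminus F^{\mathrm t}=F$, and $K=K'$ by definition. So assume $\max F=n$, write $\ell=\ell(F)\ge 1$, $F^{\mathrm t}=[n-\ell+1,n]$ and $G:=F\setminus F^{\mathrm t}$, so $|G|=f-\ell$. Let $H$ be the partner of $F$ and $H'$ the partner of $G$. By Remark \ref{r2.6}, $\mathcal L(K,k)=\mathcal L(H,k)$ and $\mathcal L(K',k)=\mathcal L(H',k)$; since $K$ (resp.\ $K'$) is the ID, i.e.\ the $\prec$-largest member, of $\mathcal L(K,k)$ (resp.\ $\mathcal L(K',k)$), it suffices to prove $\mathcal L(H,k)=\mathcal L(H',k)$, as then these two families share an ID, forcing $K=K'$.

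Next I would compute $H$ and $H'$ explicitly. Put $q'=\max G$; maximality of the tail gives $n-\ell\notin F$, so $q'\le n-\ell-1$, and set $S=[q'-1]\setminus G$. A direct computation from the definition of partner (using $F=G\cup[n-\ell+1,n]$, hence $[n]\setminus F=[n-\ell]\setminus G$) yields
\[
H'=S\cup\{q'\},\qquad H=S\cup[q'+1,n-\ell]\cup\{n\}.
\]
Since $H$ and $H'$ meet $[q'-1]$ in the same set $S$, while $q'\in H'\setminus H$ and the least element of $H\setminus H'$ is $q'+1$, the least element of the symmetric difference $H\triangle H'$ is $q'\in H'$; hence $H'\prec H$, and by transitivity of $\prec$ we get $\mathcal L(H',k)\subseteq\mathcal L(H,k)$.

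The crux is the reverse inclusion, which I would prove in contrapositive form: every $k$-set $A$ with $A\succ H'$ satisfies $A\succ H$. I use that $A\prec B$ holds iff the least element of $A\triangle B$ lies in $A$. Let $e$ be the least element of $A\triangle H'$; since $A\succ H'$ we have $e\in H'\setminus A$, and as $H'\subseteq[q']$, $e\le q'$. If $e\le q'-1$, then $e\in S\subseteq H$ while $A$ agrees with $H$ on $[e-1]$, so $e$ is the least element of $A\triangle H$ and lies in $H$, giving $A\succ H$. The essential case is $e=q'$: then $A\cap[q']=S=H\cap[q']$, so comparing $A$ with $H$ reduces to comparing $A':=A\cap[q'+1,n]$ with $R:=H\cap[q'+1,n]=[q'+1,n-\ell]\cup\{n\}$. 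Here $|A'|=k-|S|=k-q'+f-\ell$ and $|R|=(n-\ell-q')+1$, so that $|R|-|A'|=n-k-f+1\ge 1$ precisely because $k\le n-f$; thus $A'$ has strictly fewer elements than $R$ and in particular $|A'|\le |[q'+1,n-\ell]|$. If $A'\supseteq[q'+1,n-\ell]$, the size bound forces $A'=[q'+1,n-\ell]$, whence $R=A'\cup\{n\}\supsetneq A'$ and $A'\succ R$; otherwise the least element of $[q'+1,n-\ell]\setminus A'$ lies in $R\setminus A'$ and again $A'\succ R$. Either way $A\succ H$, completing the reverse inclusion and hence $\mathcal L(H,k)=\mathcal L(H',k)$, i.e.\ $K=K'$.

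The main obstacle is exactly this last step: a priori some $k$-set could slip strictly between $H'$ and $H$ in the lex order (recall $H'\prec H$ but the two are genuinely distinct), and ruling this out is where the uniformity bound $k\le n-f$ enters, through the counting $|R|-|A'|\ge 1$. The only remaining point needing care is the degenerate possibility $G=\emptyset$ (that is, $F=[n-\ell+1,n]$): here both $\mathcal L(H,k)$ and $\mathcal L(H',k)$ are empty, since $k\le n-f$ prevents any $k$-set from meeting every $f$-set, so $\mathcal L(F,f)$ (which is all of $\binom{[n]}{f}$) admits no nonempty cross-intersecting $k$-family; thus $K=K'$ holds vacuously in that case.
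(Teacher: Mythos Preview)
Your proof is correct and takes a somewhat different route from the paper's. The paper argues by a three-way case split on $k$ versus $|H'|$: for $k=|H'|$ and $k>|H'|$ it writes down $K$ and $K'$ explicitly (as $H\cap[x_y-1]\cup\{x_y\}$, possibly together with a tail $[n-k+|H'|+1,n]$) and checks they coincide, while for $k\le |H'|-1$ it simply asserts $K=K'$, relying implicitly on the fact that the last $k$-set $\prec H$ and the last $k$-set $\prec H'$ depend only on the common initial block $S=H\cap[q'-1]=H'\cap[q'-1]$. You instead work uniformly: via Remark~\ref{r2.6} you reduce to the family equality $\mathcal L(H,k)=\mathcal L(H',k)$ and establish it by a first-difference analysis, with the hypothesis $k\le n-f$ entering exactly through the count $|R|-|A'|=n-k-f+1\ge 1$. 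Your argument avoids the case split and makes the role of the bound $k\le n-f$ explicit, whereas the paper's computation is shorter once one accepts the description of the last $k$-set preceding a given set. One minor remark: in the degenerate case $G=\emptyset$ (i.e.\ $F=[n-f+1,n]$) the set $G$ has no partner, so $K'$ is undefined rather than the families being empty; the paper's proof also tacitly assumes $G\neq\emptyset$, and this edge case does not occur in the applications.
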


\begin{proof}
If $\ell(F)=0$, then we are done. Suppose $\ell(F)>0$ and $F=\{x_1, \dots, x_y\}\cup F^{\rm t}$.
Let $H$ and $H'$ be the partners of $F$ and $F\setminus F^{\rm t}$, respectively. Then $|H|=n-f+1>n-f$, consequently $k<|H|$,
$H=H\cap [x_y-1]\cup [x_y+1, n-\ell(F)]\cup \{n\}$ and
$H'=H\cap [x_y-1]\cup \{x_y\}$.
Recall that  $K$ and $K'$ are the $k$-partners of $F$ and $F\setminus F^{\rm t}$, respectively. By the definition of $k$-partner, we can see that
if $k\leq|H'|-1$,  then $K=K'$, as desired; if $k= |H'|$, then $K'=H'$ and $K=H\cap [x_y-1]\cup \{x_y\}=H'$, as desired; if $k>|H'|$, then $K'=H'\cup [n-k+|H'|+1, n]$ and $K=H\cap [x_y-1]\cup \{x_y\}\cup [n-k+|H'|+1, n]=K'$, as desired.
\end{proof}

Frankl and Kupavskii \cite{FK2018} gave a sufficient condition for a pair of maximal cross  intersecting families, and a necessary condition for a pair of maximal cross  intersecting families as stated below.
\begin{proposition}[Frankl--Kupavskii \cite{FK2018}]\label{FK+}
Let $a, b, n$ be positive integers with $n\geq a+b$. Let $P$ and $Q$ be non-empty subsets of $[n]$ with $|P|\leq a$ and $|Q|\leq b$. If $Q$ is the partner of $P$, then $\mathcal{L}(P, a)$ and $\mathcal{L}(Q, b)$ are maximal cross intersecting families.
Inversely, if $\mathcal{L}(A, a)$ and $\mathcal{L}(B, b)$ are maximal cross intersecting families with $|A|=a$ and $|B|=b$, let $j$ be the smallest element of $A\cap B$, $P=A\cap [j]$ and $Q=B\cap [j]$. Then $\mathcal{L}(P, a)=\mathcal{L}(A, a)$, $\mathcal{L}(Q, b)=\mathcal{L}(B, b)$ and $P$, $Q$ satisfy the following conditions:  $|P|\leq a$, $|Q|\leq b$, and $Q$ is the partner of $P$.
\end{proposition}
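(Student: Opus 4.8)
The plan is to prove the two implications separately: the forward direction will follow essentially for free from Proposition~\ref{p2.7} together with the symmetry of the partner relation, while the converse will be reduced first to a pair of mutual $k$-partner relations and then to a structural case analysis.

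\textbf{Forward direction.} Suppose $Q$ is the partner of $P$, so there is $q$ with $P\cap Q=\{q\}$ and $P\cup Q=[q]$, where $q=\max P=\max Q$. This relation is symmetric, so $P$ is also the partner of $Q$. First I would apply Proposition~\ref{p2.7} to the pair $(P,Q)$ (legitimate since $|P|\le a$ and $a+b\le n$): it gives that $\mathcal{L}(Q,b)$ is the maximum $b$-uniform family cross intersecting $\mathcal{L}(P,a)$; in particular the two are cross intersecting and $\mathcal{L}(Q,b)$ is maximal to $\mathcal{L}(P,a)$. Then I would apply Proposition~\ref{p2.7} again to the pair $(Q,P)$ (legitimate since $|Q|\le b$), obtaining that $\mathcal{L}(P,a)$ is maximal to $\mathcal{L}(Q,b)$. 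Hence the pair $(\mathcal{L}(P,a),\mathcal{L}(Q,b))$ is maximal.

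\textbf{Converse: reduction to mutual $k$-partners.} Now assume $\mathcal{L}(A,a)$ and $\mathcal{L}(B,b)$ are maximal cross intersecting with $|A|=a$, $|B|=b$. Since $A\in\mathcal{L}(A,a)$ and $B\in\mathcal{L}(B,b)$, cross intersection forces $A\cap B\ne\emptyset$, so $j=\min(A\cap B)$ is defined; put $P=A\cap[j]$, $Q=B\cap[j]$, and note immediately that $|P|\le a$, $|Q|\le b$, $\max P=\max Q=j$, and $P\cap Q=(A\cap B)\cap[j]=\{j\}$ by minimality of $j$. The crucial first step is to show $B$ is the $b$-partner of $A$ and $A$ is the $a$-partner of $B$. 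Let $B^{*}$ be the $b$-partner of $A$ and $H$ the partner of $A$; by Proposition~\ref{p2.7} and Remark~\ref{r2.6}, $\mathcal{L}(B^{*},b)=\mathcal{L}(H,b)$ is the maximum $b$-uniform family cross intersecting $\mathcal{L}(A,a)$, so the L-initial family $\mathcal{L}(B,b)$ satisfies $\mathcal{L}(B,b)\subseteq\mathcal{L}(B^{*},b)$. Since $\mathcal{L}(B^{*},b)$ is cross intersecting with $\mathcal{L}(A,a)$ while $\mathcal{L}(B,b)$ is maximal to $\mathcal{L}(A,a)$, this inclusion cannot be proper; thus $\mathcal{L}(B,b)=\mathcal{L}(B^{*},b)$ and hence $B=B^{*}$, as equal $b$-uniform L-initial families share the same ID. The symmetric argument gives that $A$ is the $a$-partner of $B$.

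\textbf{Converse: extracting the partners, and the main obstacle.} It remains to deduce from these two relations that $P,Q$ are partners and that truncating at $j$ changes nothing. I would analyze the partner $H$ of $A$ (so $A\cap H=\{\max A\}$, $A\cup H=[\max A]$, $|H|=\max A+1-a$) against the definition of the $b$-partner in the three cases $b=|H|$, $b>|H|$, $b<|H|$, using $n\ge a+b$ precisely to guarantee that the top interval produced in the case $b>|H|$ lies strictly above $\max A$ and therefore contributes nothing to $A\cap B$. Combining both mutual relations should show that the elements of $A$ (resp.\ $B$) exceeding $j$ form a top interval ending at $n$, i.e.\ $A=P\cup[n-a+|P|+1,\,n]$ and $B=Q\cup[n-b+|Q|+1,\,n]$; these identify $A$ as the ID of $\mathcal{L}(P,a)$ and $B$ as the ID of $\mathcal{L}(Q,b)$, giving $\mathcal{L}(P,a)=\mathcal{L}(A,a)$ and $\mathcal{L}(Q,b)=\mathcal{L}(B,b)$, while the same analysis rules out gaps in $[j-1]$ and yields $P\cup Q=[j]$, so that $Q$ is the partner of $P$. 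I expect this last case analysis to be the main obstacle: the example $A=\{2,4,7\}$ in the text shows that $B$ can be the $b$-partner of $A$ while $A$ is \emph{not} the $a$-partner of $B$, in which case the pair is not maximal, so the argument must use both mutual relations and the inequality $n\ge a+b$ simultaneously; I anticipate that Facts~\ref{f2.11} and \ref{fact2.4+} will streamline the resulting bookkeeping.
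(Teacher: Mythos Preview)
The paper does not prove Proposition~\ref{FK+}: it is quoted verbatim as a result of Frankl and Kupavskii \cite{FK2018} and then used as a black box to prove Lemma~\ref{fact2.5}. So there is no ``paper's proof'' to compare against here.

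As for your proposal itself: the forward direction is correct and clean, and your converse reduction---showing that maximality forces $B$ to be the $b$-partner of $A$ and $A$ to be the $a$-partner of $B$---is also correct. The gap is exactly where you flag it: the three-case analysis on $b$ versus $|H|$ is only sketched, and the case $b<|H|$ (where the $b$-partner is the last $b$-set $\prec H$) does not obviously yield the structure you want without further work.

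There is, however, a shortcut that avoids most of this case analysis. Once you know $B\prec H$ (which holds in all three cases of the $b$-partner definition), you can argue directly that $[j]\subseteq A\cup B$: if some $i<j$ lay outside $A\cup B$, then $i\in H\setminus B$, while every element of $B\setminus H$ lies in $(A\cap B)\cup[\max A+1,n]$ and hence is at least $j>i$; this contradicts $\min(B\setminus H)<\min(H\setminus B)$ (and the degenerate case $B\subseteq H$ is quickly ruled out). That single observation already gives $P\cup Q=[j]$, so $P$ and $Q$ are partners. Then the forward direction applied to $(P,Q)$ shows $(\mathcal{L}(P,a),\mathcal{L}(Q,b))$ is a maximal cross intersecting pair containing $(\mathcal{L}(A,a),\mathcal{L}(B,b))$, and maximality of the latter forces $\mathcal{L}(P,a)=\mathcal{L}(A,a)$ and $\mathcal{L}(Q,b)=\mathcal{L}(B,b)$. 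This bypasses the explicit top-interval description of $A$ and $B$ and the need to invoke the second mutual-partner relation.
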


Based on Proposition \ref{FK+}, we point out a necessary and sufficient condition for a pair of maximal cross intersecting families in terms of their IDs.

\begin{lemma}\label{fact2.5}
Let $A$ and $B$ be non-empty subsets of $[n]$ with $|A|+|B|\leq n$. Let $A'=A\setminus A^{\mathrm{t}}$ and $B'=B\setminus B^{\mathrm{t}}$. Then $(A, B)$ is maximal if and only if $A'$ is the partner of $B'$.
\end{lemma}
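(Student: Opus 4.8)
The plan is to first reduce Lemma \ref{fact2.5} to a symmetric condition on partners, and then to verify that condition by a short case analysis. Throughout write $a=|A|$ and $b=|B|$; since a $k$-uniform L-initial family is determined by its ID, the pair $(A,B)$ is maximal exactly when $\mathcal{L}(A,a)$ and $\mathcal{L}(B,b)$ are cross intersecting and each is maximal to the other. The first step is to establish the equivalence
\begin{equation*}
(A,B)\ \text{is maximal}\quad\Longleftrightarrow\quad B\ \text{is the}\ b\text{-partner of}\ A'\ \text{and}\ A\ \text{is the}\ a\text{-partner of}\ B'. \tag{$\ast$}
\end{equation*}
For a fixed family there is a unique largest $b$-uniform family cross intersecting it (namely all $b$-sets meeting every member), and by Fact \ref{f2.8} this family is $\mathcal{L}(\widehat{B},b)$, where $\widehat{B}$ is the $b$-partner of $A$. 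Hence $\mathcal{L}(B,b)$ is maximal to $\mathcal{L}(A,a)$ iff $\mathcal{L}(B,b)=\mathcal{L}(\widehat{B},b)$, i.e. iff $B=\widehat{B}$; and by Fact \ref{fact2.4+} the $b$-partner of $A$ coincides with the $b$-partner of $A'$. The symmetric argument treats the other family, and together they give $(\ast)$.

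It then remains to show that the right-hand side of $(\ast)$ is equivalent to ``$A'$ is the partner of $B'$''. Let $H_A$ be the partner of $A'$ and set $h_A=|H_A|$; note that $A'$ is tail-free, that $\max A'=\max H_A<n$, and that $|A'|\le a$, $|B'|\le b$. For the backward direction, assume $A'$ is the partner of $B'$, so $B'=H_A$ and $h_A=|B'|\le b$. By the definition of the $k$-partner in the case $b\ge h_A$, the $b$-partner of $A'$ equals $H_A\cup\{n-b+h_A+1,\dots,n\}=B'\cup B^{\rm t}=B$, and symmetrically $A$ is the $a$-partner of $B'$; thus the right side of $(\ast)$ holds.

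For the forward direction I would distinguish two cases according to $b$ versus $h_A$. If $b\ge h_A$, then by the definition of the $b$-partner its non-tail part is exactly $H_A$, so $B'=H_A$, which says precisely that $A'$ is the partner of $B'$. The crux is to rule out $b<h_A$. In that case $B$ is the lex-largest $b$-set with $B\prec H_A$, and I would first pin down its shape: letting $[p]$ be the maximal initial segment contained in $H_A$ and $z$ the least element of $H_A$ exceeding $p$ (the non-emptiness of $B$ forces $p<b$ and the existence of $z$), one obtains $B=[p]\cup\{z-1\}\cup\{n-(b-p-1)+1,\dots,n\}$, whence $B'=[p]\cup\{z-1\}$ and the partner $H_B$ of $B'$ has size $h_B=z-p-1$. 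The decisive observation is that the gap $\{p+1,\dots,z-1\}$, which has exactly $h_B$ elements and is disjoint from $H_A$, is contained in $A'$; therefore $h_B\le|A'|\le a$. Consequently the $a$-partner of $B'$ falls under the case $a\ge h_B$, so its non-tail part is $H_B$; since $A$ is this $a$-partner, $A'=H_B$ is the partner of $B'$, forcing $B'=H_A$ and the impossible inequality $|B'|=h_A>b\ge|B'|$. This contradiction excludes $b<h_A$ and completes the proof.

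I expect the main obstacle to be the case $b<h_A$ of the forward direction: determining the exact structure of the lex-largest $b$-set lying below $H_A$ — including the degenerate configurations in which the initial segment forces the truncation to ``back up'', precisely where the non-emptiness hypothesis is used, and where a merger with the top block can only decrease $h_B$ — and extracting the inequality $h_B\le|A'|$ from it. The backward direction and the reduction $(\ast)$ are routine once Facts \ref{f2.8} and \ref{fact2.4+} are available; note that Proposition \ref{FK+} furnishes an alternative route to the backward direction, applied with $P=A'$ and $Q=B'$ together with the fact that appending the top block does not change the generated L-initial family.
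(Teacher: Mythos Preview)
Your reduction to $(\ast)$ via Facts~\ref{f2.8} and~\ref{fact2.4+} is correct, as are the backward direction and the forward direction when $b\ge h_A$. The gap is in your explicit description of $B$ when $b<h_A$: the lex-largest $b$-set preceding $H_A$ is obtained by matching $H_A$ for as long as possible and then deviating at the \emph{last} gap among the first $b$ positions, not at the first gap (index $p+1$). Concretely, if $H_A=\{h_1<\cdots<h_{h_A}\}$ and $i^*$ is the largest index $\le b$ with $h_{i^*}>h_{i^*-1}+1$, then
\[
B=\{h_1,\dots,h_{i^*-1},\,h_{i^*}-1\}\cup[n-b+i^*+1,\,n];
\]
your formula is only the special case $i^*=p+1$. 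For instance, with $H_A=\{1,3,5,7\}$ (so $p=1$, $z=3$) and $b=3$, the lex-largest $3$-set preceding $H_A$ is $\{1,3,4\}$, not your $\{1,2,n\}$.

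Fortunately your strategy survives the correction: with the right $B$ one has $B'=\{h_1,\dots,h_{i^*-1},h_{i^*}-1\}$ and $h_B=h_{i^*}-i^*$, and since $h_{i^*},\dots,h_{h_A}$ are $h_A-i^*+1$ distinct integers, $h_B=h_{i^*}-i^*\le h_{h_A}-h_A<|A'|\le a$. So the $a$-partner of $B'$ still has non-tail part $H_B$, giving $A'=H_B$ and the same contradiction. The paper avoids all of this by invoking Proposition~\ref{FK+} in both directions: sufficiency is the first half of Frankl--Kupavskii applied to $P=A'$, $Q=B'$, and necessity comes from the second half by setting $j=\min(A\cap B)$, reading off $P=A\cap[j]$ and $Q=B\cap[j]$, and checking from $|P|+|Q|=j+1$ together with $n\ge a+b$ that $P=A'$ and $Q=B'$. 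That route is considerably shorter and never requires pinning down the shape of the $b$-partner.
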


\begin{proof}
Let $|A|=a$ and $|B|=b$.
Since $A'=A\setminus A^{\mathrm{t}}$ and $B'=B\setminus B^{\mathrm{t}}$, $\mathcal{L}(A, a)=\mathcal{L}(A', a)$ and $\mathcal{L}(B, b)=\mathcal{L}(B', b)$.
First we show the sufficiency.  Suppose that $A'$ is the partner of $B'$. Since $|A'|\leq |A|$ and $|B'|\leq |B|$, by Proposition \ref{FK+}, $\mathcal{L}(A', a)$ and $\mathcal{L}(B', b)$ are maximal cross intersecting families. Thus $\mathcal{L}(A, a)$ and $\mathcal{L}(B, b)$ are maximal cross intersecting families, in other words, $(A, B)$ is maximal.
Next, we show the necessity. Suppose that $(A, B)$ is maximal. Let $j$ be the smallest element of $A\cap B$, $P=A\cap [j]$ and $Q=B\cap [j]$. By Proposition \ref{FK+},
$\mathcal{L}(A, a)=\mathcal{L}(P, a)$, $\mathcal{L}(B, b)=\mathcal{L}(Q, b)$;
$|P|\leq a$, $|Q|\leq b$ and
 $P$ is the partner of $Q$.
Since $\mathcal{L}(A, a)=\mathcal{L}(P, a)$ and $P\subseteq A$,  $A=P\cup \{n-a+|P|+1, \dots, n\}$. Similarly,  $B=Q\cup \{n-b+|Q|+1, \dots, n\}$.
Note that $|P|\leq a$, $|Q|\leq b$ and $n\geq a+b$. Then $|Q|\leq n-a$ and $|P|\leq n-b$. Since $P$ is the partner of $Q$ and $\max P=\max Q=j$, $|P|+|Q|=j+1$.
So $j+1-|P|=|Q|\leq n-a$ and $j+1-|Q|=|P|\leq n-b$. Consequently, $j\leq n-a+|P|-1$ and $j\leq n-b+|Q|-1$. Hence, $P=A\setminus A^{\rm t}=A'$ and $Q=B\setminus B^{\rm t}=B'$.
 Since  $P$ is the partner of $Q$, $A'$ is the partner of $B'$.
\end{proof}

\begin{fact}\label{f2.10}
Let $a, b, n$ be positive integers and $n\geq a+b$. Suppose that $A$ is an $a$-subset of $[n]$, and $B$ is the $b$-partner of $A$. Let $A'$ be the  $a$-partner of $B$, then $(A', B)$ is maximal. Moreover, $A\prec A'$.
\end{fact}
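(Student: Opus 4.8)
The plan is to prove the two assertions separately, using only the partner/$k$-partner machinery already developed, in particular Fact~\ref{f2.8}, Proposition~\ref{p2.7}, Remark~\ref{r2.6} and Fact~\ref{fact2.4+}. The guiding idea is that $\mathcal{L}(A',a)$ is not merely \emph{maximal} but in fact the \emph{maximum} $a$-uniform family cross intersecting to $\mathcal{L}(B,b)$, and this maximality is exactly what drives both conclusions. Note throughout that $n\ge a+b$ guarantees $a\le n-b$ and $b\le n-a$, so all the $k$-partners involved are well defined and have the expected sizes.

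First I would establish $A\prec A'$. Since $B$ is the $b$-partner of $A$, Fact~\ref{f2.8} tells us that $\mathcal{L}(A,a)$ and $\mathcal{L}(B,b)$ are cross intersecting. Next I would identify $\mathcal{L}(A',a)$ as the maximum $a$-uniform family cross intersecting to $\mathcal{L}(B,b)$: writing $B'=B\setminus B^{\mathrm{t}}$, we have $|B'|\le b\le n-a$ and $\mathcal{L}(B,b)=\mathcal{L}(B',b)$; letting $Q$ be the partner of $B'$, Proposition~\ref{p2.7} (with the roles of $a$ and $b$ interchanged) says $\mathcal{L}(Q,a)$ is the maximum $a$-uniform family cross intersecting to $\mathcal{L}(B',b)=\mathcal{L}(B,b)$. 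By Fact~\ref{fact2.4+} the $a$-partner $A'$ of $B$ equals the $a$-partner of $B'$, so Remark~\ref{r2.6} gives $\mathcal{L}(Q,a)=\mathcal{L}(A',a)$. Hence $\mathcal{L}(A',a)$ is that maximum family. As $\mathcal{L}(A,a)$ is one $a$-uniform family cross intersecting to $\mathcal{L}(B,b)$, we get $|\mathcal{L}(A,a)|\le|\mathcal{L}(A',a)|$, and since both are L-initial (hence nested as initial segments of the lex order) this forces $\mathcal{L}(A,a)\subseteq\mathcal{L}(A',a)$, i.e. $A\prec A'$.

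For the maximality of $(A',B)$ I must verify both directions. One direction, that $A'$ is maximal to $B$, is immediate: since $A'$ is the $a$-partner of $B$, Fact~\ref{f2.8} yields that $\mathcal{L}(A',a)$ is maximal to $\mathcal{L}(B,b)$. The reverse direction, that $B$ is maximal to $A'$, is where the inclusion from the previous paragraph is crucial. Suppose some $b$-uniform family $\mathcal{B}'\supseteq\mathcal{L}(B,b)$ is cross intersecting to $\mathcal{L}(A',a)$. Because $\mathcal{L}(A,a)\subseteq\mathcal{L}(A',a)$, the family $\mathcal{B}'$ is then also cross intersecting to $\mathcal{L}(A,a)$; but Fact~\ref{f2.8} (applied to $B$ being the $b$-partner of $A$) says $\mathcal{L}(B,b)$ is maximal to $\mathcal{L}(A,a)$, forcing $\mathcal{B}'=\mathcal{L}(B,b)$. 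Thus $\mathcal{L}(B,b)$ is maximal to $\mathcal{L}(A',a)$, so $B$ is maximal to $A'$, and $(A',B)$ is maximal.

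The routine parts---the size bookkeeping for $k$-partners and the identity $\mathcal{L}(B,b)=\mathcal{L}(B',b)$---follow directly from the definitions and the cited facts. The only genuinely conceptual step, and the one I would treat most carefully, is the monotonicity argument in the last paragraph: the point is that enlarging the opposite family from $\mathcal{L}(A,a)$ to the larger $\mathcal{L}(A',a)$ only tightens the cross intersecting constraint, so maximality of $\mathcal{L}(B,b)$ against the smaller family automatically upgrades to maximality against the larger one. Establishing $A\prec A'$ first is therefore not an afterthought but the hinge on which the maximality of $(A',B)$ turns.
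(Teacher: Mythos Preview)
Your proof is correct and follows the same core idea as the paper's---both hinge on two applications of Fact~\ref{f2.8}. The paper's argument is terser: it simply notes that $\mathcal{L}(A,a)$ and $\mathcal{L}(B,b)$ are cross intersecting (first application of Fact~\ref{f2.8}) and that $\mathcal{L}(A',a)$ is maximal to $\mathcal{L}(B,b)$ (second application), then concludes $\mathcal{L}(A,a)\subseteq\mathcal{L}(A',a)$ directly from the total order on L-initial families. Your detour through Proposition~\ref{p2.7}, Fact~\ref{fact2.4+} and Remark~\ref{r2.6} to upgrade ``maximal'' to ``maximum'' is unnecessary here, since for L-initial families the two notions coincide.

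Where your proof genuinely adds something is in the second half: the paper's proof actually never verifies that $B$ is maximal to $A'$---it only establishes that $A'$ is maximal to $B$ and that $A\prec A'$, leaving the statement ``$(A',B)$ is maximal'' formally unjustified. Your monotonicity argument (enlarging $\mathcal{L}(A,a)$ to $\mathcal{L}(A',a)$ only tightens the constraint, so maximality of $\mathcal{L}(B,b)$ against the smaller family transfers to the larger one) is exactly the missing step, and you are right to flag it as the conceptual hinge.
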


\begin{proof}
By Fact \ref{f2.8},  $B$ is maximal to $A$. So $\mathcal{L}([n], A, a)$ and $\mathcal{L}([n], B, b)$ are cross intersecting. Since $A'$ is the  $a$-partner of $B$, by Fact \ref{f2.8} again,
$\mathcal{L}([n], A', a)$ and $\mathcal{L}([n], B, b)$ are cross intersecting and $A'$ is maximal to $B$. So $\mathcal{L}([n], A, a)\subseteq \mathcal{L}([n], A', a)$, i.e., $A\prec A'$.
\end{proof}

\begin{fact}\label{fact2.7}
Let $a, b, k, n$ be positive integers and $n\geq \max\{a+k, b+k\}$. Let $A$  and $B$ be two subsets of $[n]$ with sizes $|A|=a$ and $|B|=b$. Suppose that $K_a$ and $K_b$ are the $k$-partners of $A$ and $B$ respectively. If $A\prec B$, then $K_b\prec K_a$. In particular, if $A$ is the $a$-parity of $B$, then $K_a= K_b$.
\end{fact}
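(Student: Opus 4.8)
The plan is to reduce the inequality $K_b\prec K_a$ to a cross-intersection statement and then invoke the maximality that is built into the $k$-partner. By Fact~\ref{f2.8} together with Proposition~\ref{p2.7} and Remark~\ref{r2.6}, $\mathcal{L}(K_a,k)$ is the largest $k$-uniform family cross intersecting $\mathcal{L}(A,a)$, and likewise $\mathcal{L}(K_b,k)$ is the largest $k$-uniform family cross intersecting $\mathcal{L}(B,b)$. Since $K_a$ and $K_b$ are the IDs of these L-initial families, the conclusion $K_b\prec K_a$ is equivalent to the containment $\mathcal{L}(K_b,k)\subseteq\mathcal{L}(K_a,k)$. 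Hence it suffices to show that the family $\mathcal{L}(K_b,k)$, which cross-intersects $\mathcal{L}(B,b)$ by Fact~\ref{f2.8}, also cross-intersects $\mathcal{L}(A,a)$; the maximality of $\mathcal{L}(K_a,k)$ then forces the containment.

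To prove this cross-intersection claim I argue by contradiction. Suppose some $K\in\mathcal{L}(K_b,k)$ fails to meet some $F\in\mathcal{L}(A,a)$, i.e. $K\cap F=\emptyset$. Put $C=[n]\setminus K$, so that $F\subseteq C$, $|F|=a$, $|C|=n-k\ge\max\{a,b\}$, and $F\prec A\prec B$. The goal is to exhibit a $b$-subset $G\subseteq C$ with $G\prec B$: since $G\cap K=\emptyset$ this contradicts the fact that $\mathcal{L}(K_b,k)$ cross-intersects $\mathcal{L}(B,b)$ (note $B\in\mathcal{L}(B,b)$, so every member of $\mathcal{L}(K_b,k)$ meets every $b$-set $\preceq B$). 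When $a\le b$ this is immediate: any $b$-set $G$ with $F\subseteq G\subseteq C$ (which exists as $|C|\ge b$) is a superset of $F$, so $G\prec F$ by the first clause in the definition of $\prec$, and combined with $F\prec B$ this yields $G\prec B$.

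The main obstacle is the regime $a>b$, where we must \emph{shrink} $F$ to size $b$; under the superset convention for $\prec$, passing to a subset moves a set \emph{later} in lex order, so $G\prec B$ is not automatic and requires an element-level argument. I would split into two subcases. If $B\subseteq F$, take $G=B\subseteq F\subseteq C$, which trivially satisfies $G\preceq B$. Otherwise let $m=\min(F\setminus B)$; the hypothesis $F\prec B$ forces $m<\min(B\setminus F)$, from which one checks that $B$ and $F$ share the same elements below $m$, namely $B\cap[m-1]=F\cap[m-1]$, and that $m\notin B$. Taking $G$ to be the $b$ smallest elements of $F$ (equivalently $(F\cap[m-1])\cup\{m\}$ together with $b-1-|F\cap[m-1]|$ further elements of $F$ exceeding $m$, which exist since $a>b$), one verifies $\min(G\triangle B)=m\in G$, so $G\prec B$, while $G\subseteq F\subseteq C$. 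This supplies the required $G$ and completes the cross-intersection claim, hence the inequality $K_b\prec K_a$. I expect the verification that $\min(G\triangle B)=m$ (i.e. that $G$ and $B$ genuinely first differ at $m$) to be the delicate point, since it is exactly where the superset convention must be reconciled with element-wise lex comparison.

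Finally, the ``in particular'' statement is a direct consequence of Fact~\ref{fact2.4+} rather than of the inequality itself. If $A$ is the $a$-parity of $B$, then by the definition of parity $A$ and $B$ have the same body, i.e. $A\setminus A^{\mathrm t}=B\setminus B^{\mathrm t}$. Since Fact~\ref{fact2.4+} asserts that the $k$-partner of any set equals the $k$-partner of the set with its tail deleted, we obtain that $K_a$ equals the $k$-partner of $A\setminus A^{\mathrm t}$, which equals the $k$-partner of $B\setminus B^{\mathrm t}$, which equals $K_b$; thus $K_a=K_b$, as claimed.
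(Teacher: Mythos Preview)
Your proof is correct but takes a different route from the paper's. The paper argues in two quick steps: first, if $K_A,K_B$ denote the (unsized) partners of $A,B$, then $A\prec B$ implies $K_B\prec K_A$ (asserted as immediate from the partner construction, essentially because taking complements within an initial segment reverses lex order on characteristic vectors); second, passing from the partner to the $k$-partner---by either padding with a tail or truncating to the last $k$-set $\prec$ the partner---preserves this comparison. Both steps are stated as direct consequences of definitions and left unelaborated. You instead bypass the unsized partners entirely and work through cross-intersection: you show $\mathcal{L}(K_b,k)$ cross-intersects $\mathcal{L}(A,a)$ by an explicit shrink-or-extend argument on sets disjoint from a putative $K$, then invoke the maximality of $\mathcal{L}(K_a,k)$ from Proposition~\ref{p2.7}/Remark~\ref{r2.6}. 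Your approach is longer but more self-contained, since the paper's first step (order-reversal of partners for sets of different sizes) is not actually proved anywhere in the text. One small point you use implicitly is the transitivity of $\prec$ across sets of different cardinalities (to get $F\prec B$ from $F\prec A\prec B$); this holds because $A\prec B$ is equivalent to the characteristic vector of $A$ being lexicographically at least that of $B$, but it is worth a sentence. For the ``in particular'' clause, your argument via Fact~\ref{fact2.4+} is identical to the paper's.
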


\begin{proof}
Let $K_A$ and $K_B$ be the partners of $A$ and $B$, respectively.
Since $A\prec B$, $K_B\prec K_A$. Therefore, by the definition of $k$-partner directly, $K_b\prec K_a$, as required.
For the second claim, since $A$ is the $a$-parity of $B$, $A\setminus A^{\rm t}=B\setminus B^{\rm t}$. Let $F$ be the $k$-partner of $A\setminus A^{\rm t}$. So $F$ is the $k$-partner of $B\setminus B^{\rm t}$ as well. By Fact \ref{fact2.4+}, $K_a=F$ and $K_b=F$. So $K_a=K_b$, as required.
\end{proof}

\begin{definition}\label{def2.7}
Let $h_1\leq h_2$. For two families $\mathcal{H}_1\subseteq {[n]\choose h_1}$ and $\mathcal{H}_2\subseteq {[n]\choose h_2}$, we say that $\mathcal{H}_1$ is the $h_1$-parity of $\mathcal{H}_2$ (or $\mathcal{H}_2$ is the $h_2$-parity of $\mathcal{H}_1$) if
the following properties hold:

(i) for any $H_1\in \mathcal{H}_1$, the $h_2$-parity of $H_1$ exists and must be in $\mathcal{H}_2$;

(ii) for any $H_2\in \mathcal{H}_2$, either $H_2$ has no $h_1$-parity or its $h_1$-parity is in $\mathcal{H}_1$.
\end{definition}

From the definition of parity,
 if a set  has a $k$-parity, then it has the unique one.
\begin{proposition}\label{prop2.9}
Let $f, g, h, n$ be positive integers with $f\geq g$ and $n\geq f+h$. Let
\begin{align*}
&\mathcal{F}=\left\{ F\in{[n]\choose f}: \text{ there exists } H\in {[n]\choose h} \text{ such that } (F, H) \text{ is maximal }  \right\},\\
&\mathcal{G}=\left\{ G\in{[n]\choose g}: \text{ there exists } H\in {[n]\choose h} \text{ such that } (G, H) \text{ is maximal }  \right\}.
\end{align*}
Let $\mathcal{H}_{\mathcal{F}}\subseteq {[n]\choose h}$ and $\mathcal{H}_{\mathcal{G}}\subseteq {[n]\choose h}$ be the families
such that $\mathcal{F}$ and $\mathcal{H}_{\mathcal{F}}$ are maximal pair families and $\mathcal{G}$ and $\mathcal{H}_{\mathcal{G}}$ are  maximal pair families.
Then the following properties hold:

(i) $\mathcal{F}$ is the $f$-parity of $\mathcal{G}$;

(ii) $\mathcal{H}_{\mathcal{G}}\subseteq \mathcal{H}_{\mathcal{F}}$;

(iii) for any $G\in \mathcal{G}$, let $F\in \mathcal{F}$ be the $f$-parity of $G$ and let $H\in \mathcal{H}$ be such that $(F, H)$ is maximal, then $(G, H)$ is maximal.
\end{proposition}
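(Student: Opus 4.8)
The plan is to push everything through Lemma~\ref{fact2.5}, which reformulates maximality of a pair entirely in terms of the \emph{cores} $F':=F\setminus F^{\rm t}$ and $H':=H\setminus H^{\rm t}$: the pair $(F,H)$ is maximal if and only if $F'$ is the partner of $H'$. The structural heart of the proposition is the elementary observation that \emph{parity preserves the core}: if $F$ is the $f$-parity of $G$, then by definition $F\setminus F^{\rm t}=G\setminus G^{\rm t}$, i.e.\ $F'=G'$. Combining these two facts yields the key equivalence that drives all three parts: for a fixed $h$-set $H$,
\[
(F,H)\text{ is maximal}\iff F'\text{ is the partner of }H'\iff G'\text{ is the partner of }H'\iff (G,H)\text{ is maximal},
\]
so one and the same $H$ is maximal to $F$ precisely when it is maximal to $G$. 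I would isolate this equivalence first, since (ii) and (iii) are then almost immediate.

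For part (i) I must verify the two clauses of Definition~\ref{def2.7}. Take $G\in\mathcal{G}$ with witness $H$ (so $(G,H)$ is maximal, whence $G'$ is the partner of $H'$ and $|H'|\le h$). I first show the $f$-parity $F$ of $G$ \emph{exists}: writing $q=\max G'=\max H'$, the partner relation gives $q=|G'|+|H'|-1$, while the $f$-parity requires the tail $[\,n-\ell(F)+1,n\,]$ of length $\ell(F)=\ell(G)+(f-g)$ to avoid $G'$, i.e.\ $q<n-\ell(F)$. A short substitution collapses this to $|H'|+f\le n$, which holds because $|H'|\le h$ and $n\ge f+h$; the strictness also guarantees that the appended tail has length exactly $\ell(F)$. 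Since $F'=G'$, the same $H$ witnesses $(F,H)$ maximal by the key equivalence, so $F\in\mathcal{F}$. This is clause~(i) of Definition~\ref{def2.7}. For the second clause, take $F\in\mathcal{F}$ with witness $H$; if $F$ has a $g$-parity $G$ (which needs $\ell(F)\ge f-g$), then $G'=F'$ and the same $H$ gives $(G,H)$ maximal, so $G\in\mathcal{G}$; if no $g$-parity exists the clause is vacuous. Hence $\mathcal{F}$ is the $f$-parity of $\mathcal{G}$.

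Parts (ii) and (iii) then drop out. For (ii), let $H\in\mathcal{H}_{\mathcal{G}}$, so $(G,H)$ is maximal for some $G\in\mathcal{G}$; by part~(i) its $f$-parity $F\in\mathcal{F}$ satisfies $F'=G'$, so the key equivalence gives $(F,H)$ maximal and thus $H\in\mathcal{H}_{\mathcal{F}}$. For (iii), if $F$ is the $f$-parity of $G$ and $(F,H)$ is maximal, then $G'=F'$ and the key equivalence immediately yields $(G,H)$ maximal.

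The conceptual work is therefore concentrated in a single spot. The genuinely quantitative step---the only place the hypothesis $n\ge f+h$ is used---is the existence of the $f$-parity of a set $G\in\mathcal{G}$, where one must check that lengthening the tail from $\ell(G)$ to $\ell(G)+(f-g)$ neither overruns $[n]$ nor collides with the core $G'$, and that the resulting tail has exactly the prescribed length (this is the delicate strict inequality $q<n-\ell(F)$). I would also record, as a preliminary well-definedness remark, that the witness $H$ for a given $F\in\mathcal{F}$ is unique: maximality forces $H'$ to be the partner of $F'$ and forces $\ell(H)=h-|H'|$, so $\mathcal{H}_{\mathcal{F}}$ and $\mathcal{H}_{\mathcal{G}}$ are indeed the well-defined second coordinates of maximal pair families, and the statement of the proposition is meaningful.
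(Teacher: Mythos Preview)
Your proposal is correct and follows essentially the same approach as the paper's proof: both reduce maximality to the partner relation on the cores via Lemma~\ref{fact2.5}, use that parity preserves the core, and verify existence of the $f$-parity of $G\in\mathcal{G}$ by the same inequality $\max G'\le n-\ell(G)-(f-g)-1$, which unwinds to $|H'|+f\le n$. Your organization---isolating the core-preserving equivalence up front and deriving (ii), (iii) as corollaries---is a slightly cleaner packaging of exactly the paper's argument.
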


\begin{proof}
If $f=g$, then $\mathcal{F}=\mathcal{G}$ and $\mathcal{H}_{\mathcal{G}}=\mathcal{H}_{\mathcal{F}}$, we are done. We next consider the case that $f=g+s$ for some $s\geq 1$.

First, we are going to prove (i).
Let $G\in \mathcal{G}$. Then there is the unique $H\in \mathcal{H}$ such that $(G, H)$ is maximal. Let $G'=G\setminus G^{\rm t}=G\setminus \{n-\ell(G)+1, \dots, n\}$ and $H'=H\setminus H^{\rm t}=H\setminus \{n-\ell(H)+1, \dots, n\}$. Then, by Lemma \ref{fact2.5}, $G'$ and $H'$ are partners of each other.
So $\max G'=|G'|+|H'|-1=g-\ell(G)+|H'|-1=f-s-\ell(G)+|H'|-1\leq f+h-s-\ell(G)-1\leq n-\ell(G)-s-1$.
Let $F=G'\cup \{n-\ell(G)+1-s, \dots, n\}$. Then $\max G'\leq n-\ell(G)-s-1$ implies $F^{\rm t}=\{n-\ell(G)+1-s, \dots, n\}$.  Then $G\subseteq F$, $|F|=f$ and $\ell(F)-\ell(G)=f-g=s$. By the definition of parity, $F$ is the $f$-parity of $G$.  Let $F'=F\setminus F^{\rm t}$. So $F'=G'$. Furthermore, $F'$ and $H'$ are partners of each other. By Lemma \ref{fact2.5} again, $(F, H)$ is maximal. So $F\in \mathcal{F}$. Then to prove (i), we owe to confirm Definition \ref{def2.7} (ii). Let $F\in \mathcal{F}$. If $F$ dose not have $h_1$-parity, then we are done. Assume that $G$ is the $h_1$-parity of $F$. Then $G\setminus G^{\rm t}=F\setminus F^{\rm t}$. Since $F\in \mathcal{F}$, there exists $H$ in $\mathcal{H}_{\mathcal{F}}$ such that $(F, H)$ is maximal. By Lemma \ref{fact2.5}, $F\setminus F^{\rm t}$ is the partner of $H\setminus H^{\rm t}$. So $G\setminus G^{\rm t}$ is the partner of $H\setminus H^{\rm t}$ as well. By Lemma \ref{fact2.5} again, $(G, H)$ is maximal. So $G\in \mathcal{G}$. This implies that $\mathcal{F}$ is the $f$-parity of $\mathcal{G}$, as desired.

Next, we are going to prove (ii). Let $H\in \mathcal{H}_{\mathcal{G}}$. Then there exists $G\in \mathcal{G}$ such that $(G, H)$ is maximal. By (i), there exists $F\in \mathcal{F}$ such that $F$ is the $f$-parity of $G$.
By Lemma \ref{fact2.5}, $G\setminus G^{\rm t}$ is the partner of $H\setminus H^{\rm t}$. Since
$F$ is the $f$-parity of $G$, $G\setminus G^{\rm t}=F\setminus F^{\rm t}$. So
$F\setminus F^{\rm t}$ is the partner of $H\setminus H^{\rm t}$. By Lemma \ref{fact2.5} again, $(F, H)$ is maximal as well.
This implies $\mathcal{H}_{\mathcal{G}}\subseteq \mathcal{H}_{\mathcal{F}}$, as required.

At last, we are going to prove (iii).
Let $G\in \mathcal{G}$, and let $F$ be the $f$-parity of $G$. By (i), $F\in \mathcal{F}$. So there exists $H\in \mathcal{H}$  such that $(F, H)$ is maximal.
By Lemma \ref{fact2.5}, $F\setminus F^{\rm t}$ is the partner of $H\setminus H^{\rm t}$. Since
$F$ is the $f$-parity of $G$, $G\setminus G^{\rm t}=F\setminus F^{\rm t}$. So
$G\setminus G^{\rm t}$ is the partner of $H\setminus H^{\rm t}$. By Lemma \ref{fact2.5} again,
$(G, H)$ is maximal, as desired.
\end{proof}

\begin{proposition}\label{5-13-1}
Let $n=k+\ell$, $A$ be any $k$-subset of $[n]$. Then there is an $\ell$-set $B\subset [n]$ such that $(A, B)$ is maximal. Moreover, for any $A\in {[n]\choose k}$, let $B$ be the $\ell$-set such that $(A, B)$ is maximal, then we have $M(n, k, \ell)=|\mathcal{L}(A, k)|+|\mathcal{L}(B, \ell)|={n\choose \ell}$.
\end{proposition}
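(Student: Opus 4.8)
The plan is to prove the two assertions in turn, producing the $\ell$-set $B$ with the partner calculus of Lemma~\ref{fact2.5}, and then reading off the value $\binom{n}{\ell}$ from a complementation argument that becomes available precisely because $n=k+\ell$.

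For the existence of $B$, I would take a $k$-set $A$, set $A'=A\setminus A^{\mathrm{t}}$, and let $P$ be the partner of $A'$ (first assuming $A'\neq\emptyset$, i.e.\ $A\neq[n-k+1,n]$). Since $\ell(A)$ elements of $A$ form its tail, we have $|A'|=k-\ell(A)$ and $\max A'\leq n-\ell(A)-1$, so the partner-size formula $|P|=\max A'-|A'|+1$ gives $|P|\leq n-k=\ell$; moreover $\max P=\max A'=|P|+|A'|-1<n-\ell+|P|$ because $|A'|\leq k$. These are exactly the two inequalities needed to realize $P$ as the head of an $\ell$-set: define $B=P$ when $|P|=\ell$ and $B=P\cup[n-\ell+|P|+1,\,n]$ otherwise. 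As $\max P\leq n-1$, the set $P$ has no tail of its own, so in either case $B$ is an $\ell$-set with $B\setminus B^{\mathrm{t}}=P$. Because $P$ is the partner of $A'$, Lemma~\ref{fact2.5} immediately yields that $(A,B)$ is maximal.

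For the value, I would use that when $n=k+\ell$ an $\ell$-set $C$ and a $k$-set $D$ are disjoint if and only if $C=[n]\setminus D$ (both have size $\ell$). Hence for any cross intersecting $\mathcal{A}\subseteq\binom{[n]}{k}$ and $\mathcal{B}\subseteq\binom{[n]}{\ell}$, the families $\overline{\mathcal{A}}=\{[n]\setminus A:A\in\mathcal{A}\}$ and $\mathcal{B}$ are disjoint subfamilies of $\binom{[n]}{\ell}$, so $|\mathcal{A}|+|\mathcal{B}|=|\overline{\mathcal{A}}|+|\mathcal{B}|\leq\binom{n}{\ell}$, which already gives $M(n,k,\ell)\leq\binom{n}{\ell}$; equality holds exactly when $\mathcal{B}=\binom{[n]}{\ell}\setminus\overline{\mathcal{A}}$. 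When $(A,B)$ is maximal, $\mathcal{L}(B,\ell)$ is by definition the largest $\ell$-uniform family cross intersecting $\mathcal{L}(A,k)$, which is precisely $\binom{[n]}{\ell}\setminus\overline{\mathcal{L}(A,k)}$; substituting gives $|\mathcal{L}(A,k)|+|\mathcal{L}(B,\ell)|=\binom{n}{\ell}$. Finally, since this common value is attained by a non-empty system (e.g.\ $A=[k]$, for which $\mathcal{L}(A,k)=\{[k]\}$ and its maximal partner family is $\binom{[n]}{\ell}\setminus\{[n]\setminus[k]\}$, both non-empty since $\binom{n}{\ell}\geq 2$), it equals $M(n,k,\ell)$.

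The only delicate points are bookkeeping and one boundary instance. One must verify the two displayed inequalities $|P|\leq\ell$ and $\max P<n-\ell+|P|$ in both the $\ell(A)=0$ and $\ell(A)>0$ regimes, and confirm that the extension carries no spurious tail so that $B\setminus B^{\mathrm{t}}=P$ holds exactly; these are routine once the tail structure of $A$ is tracked carefully. The genuinely degenerate case is $A=[n-k+1,n]$, where $\mathcal{L}(A,k)=\binom{[n]}{k}$ is the whole family and no $\ell$-set can cross intersect all $k$-sets, so its maximal partner is the empty family; here the identity $|\mathcal{L}(A,k)|+0=\binom{n}{k}=\binom{n}{\ell}$ still holds under the convention that the empty family is admitted as L-initial. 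I expect this boundary instance, rather than the main construction, to be the step requiring the most explicit comment.
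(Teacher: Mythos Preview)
Your approach is essentially the same as the paper's: form $P$ (the paper calls it $B'$) as the partner of $A\setminus A^{\mathrm t}$, extend to an $\ell$-set $B$ by appending a tail, invoke Lemma~\ref{fact2.5}, and then read off $|\mathcal L(A,k)|+|\mathcal L(B,\ell)|=\binom{n}{\ell}$ from the complementation identity $\mathcal L(B,\ell)=\binom{[n]}{\ell}\setminus\overline{\mathcal L(A,k)}$. Your write-up is in fact more careful than the paper's, which silently skips both the verification that $\max P<n-\ell+|P|$ and the degenerate case $A=[n-k+1,n]$ where $A\setminus A^{\mathrm t}=\emptyset$.
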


\begin{proof}
Let $A\in {[n]\choose k}$ and let $B'$ be the partner of $A\setminus A^{\mathrm{t}}$.  Since $n=k+\ell$, $|B'|\leq \ell$ and $\max B'<n$, in other words, $\ell(B')=0$.  If $|B'|=\ell$, then let $B=B'$. If $|B'|<\ell$, then let $B=B'\cup [n-\ell+|B'|+1, n]$. By Fact \ref{fact2.5}, $(A, B)$ is maximal. This implies
$\mathcal{L}(B, \ell)={[n]\choose \ell}\setminus \overline{\mathcal{L}(A, k)}$
since $n=k+\ell$. Thus, $M(n, k, \ell)=|\mathcal{L}(A, k)|+|\mathcal{L}(B, \ell)|={n\choose \ell}$, we are done.
\end{proof}

\section{Results of non-mixed type}\label{sec4}
In this section, we summarize some  results from our earlier work \cite{HP+} on the non-mixed type, which will be utilized in the present paper.
Denote
\begin{equation*}
\mathbb{R}_1=\Big\{ R\in {[n]\choose k_1}:\{1, n-k_1+2, \dots, n\}\prec R\prec \{k_{t}, n-k_1+2, \dots, n\}\Big\}.
\end{equation*}

  Two related functions are defined in \cite{HP+} as follows.

\begin{definition}\label{ab-}
Let $t\geq 2$, $k_1, k_2, \dots, k_{t}$ be positive integers with $k_1\geq k_2\geq \dots \geq k_{t}$ and $n\geq k_1+k_2$.
Let $R$ and $R'$  be of  $\mathbb{R}_1$ such that $R\prec R'$.
For each $j\in [2, t]$,
 let $T_j$ and $T'_j$ be the $k_j$-partners of $R$ and $R'$, respectively. We  define\footnote{Taking $i=1$, $d_1=\dots=d_t=1$ and $m_i=k_t$  in reference \cite[eq. (16) and (17)]{HP+}.}
\begin{align}\label{eq6}
&\alpha(R, R')=|\mathcal{L}(R', k_1)|-|\mathcal{L}(R, k_1)|,\\ \label{eq7}
&\beta(R, R')=\sum_{j=2}^t\big(|\mathcal{L}(T_j, k_j)|-|\mathcal{L}(T'_j, k_j)|\big).
\end{align}
\end{definition}
Notably,   the original definition in \cite{HP+}  regarding $\beta(R, R')$ is
 $\sum_{j=2}^t\big(|\mathcal{L}(T, k_j)|-|\mathcal{L}(T', k_j)|\big)$, where $T, T'$ are the partners of $R, R'$, respectively. By Remark \ref{r2.6}, these two definitions are equivalent.

\begin{definition}[$F_1<F_2$]\label{<}
 We say that $F_1<F_2$
 if $F_1\precneqq F_2$ and there is no $F'$ such that $|F'|=|F_1|$  and $F_1\precneqq F' \precneqq F_2$. 

\end{definition}



 The authors   proved the  following  result which constitutes an essential part of the `unimodality method' in  \cite{HP+}. (In fact,  \cite{HP+} presents a more general form.)

\begin{proposition}[Proposition 2.12 in \cite{HP+}]\label{clm23}
Let $n\geq k_1+k_2$ and $k_1\geq k_2\geq \dots\geq k_t$.
Let $F, G \in \mathbb{R}_1$ with $F<G$ and $\max G=q$. Then \\
(i) $\alpha(F, G)=1$.\\
(ii) $\beta(F, G)=\sum_{j=2}^t{n-q\choose k_j-(q-k_1)}$.\\
 (iii) If $n= k_1+k_j$ holds for any $j\in [t]\setminus \{1\}$, then $\beta(F, G)=t-1$; otherwise, $\beta(F, G)$ decreases when $q$ strictly increases until $\beta(F, G)=0$.
\end{proposition}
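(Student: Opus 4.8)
The plan is to establish the three parts in the order stated; part~(ii) carries essentially all of the content, while (i) and (iii) are short consequences.

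For (i), I would simply unwind the definitions. Since $\mathcal{L}(R,k_1)=\{H\in\binom{[n]}{k_1}:H\prec R\}$, we have $\mathcal{L}(G,k_1)\setminus\mathcal{L}(F,k_1)=\{H\in\binom{[n]}{k_1}:F\precneqq H\prec G\}$. The hypothesis $F<G$ (Definition~\ref{<}) says precisely that no $k_1$-set lies strictly between $F$ and $G$, so this difference is exactly $\{G\}$, and therefore $\alpha(F,G)=|\mathcal{L}(G,k_1)|-|\mathcal{L}(F,k_1)|=1$.

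For (ii), the first move is to reinterpret $|\mathcal{L}(T_j,k_j)|$ combinatorially. By Proposition~\ref{p2.7} (equivalently Fact~\ref{f2.8}), $\mathcal{L}(T_j,k_j)$ is the \emph{largest} $k_j$-uniform family cross intersecting $\mathcal{L}(F,k_1)$; since the collection of \emph{all} $k_j$-sets meeting every member of $\mathcal{L}(F,k_1)$ is itself such a family and contains every such family, $|\mathcal{L}(T_j,k_j)|$ equals the number of $k_j$-sets that intersect every member of $\mathcal{L}(F,k_1)$ (here $n\geq k_1+k_2\geq k_1+k_j$ guarantees these complements are large enough). By part~(i), $\mathcal{L}(G,k_1)=\mathcal{L}(F,k_1)\cup\{G\}$, so $|\mathcal{L}(T_j,k_j)|-|\mathcal{L}(T'_j,k_j)|$ counts exactly those $k_j$-sets $B$ that meet every member of $\mathcal{L}(F,k_1)$ but are disjoint from $G$. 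The crucial reformulation is then the following: $B$ is disjoint from $G$ iff $G\subseteq[n]\setminus B$, and $B$ meets every $A\prec F$ iff no $k_1$-subset of $[n]\setminus B$ is $\prec F$; because $G$ is the immediate lex-successor of $F$, these two conditions together say exactly that $G$ is the lexicographically smallest $k_1$-subset of $[n]\setminus B$. Writing $q=\max G$, requiring the $k_1$ smallest elements of $[n]\setminus B$ to be $G$ is equivalent to $B\cap G=\emptyset$ together with $[q]\setminus G\subseteq B$. Hence $B$ must contain the $q-k_1$ forced elements of $[q]\setminus G$, avoid the $k_1$ elements of $G$, and choose its remaining $k_j-(q-k_1)$ elements freely from $\{q+1,\dots,n\}$, a set of size $n-q$. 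This yields $\binom{n-q}{k_j-(q-k_1)}$ choices, and summing over $j\in[2,t]$ gives (ii). I expect this reformulation to be the main obstacle: recognizing that ``cross intersecting $\mathcal{L}(F,k_1)$ but missing $G$'' collapses to the single clean condition that $G$ be the lex-minimal $k_1$-subset of the complement, while keeping the reflexive/strict conventions of $\prec$ and the order-reversal under partners straight.

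For (iii), I would substitute the formula from (ii) and rewrite each summand symmetrically as $\binom{n-q}{k_j-(q-k_1)}=\binom{n-q}{\,n-k_1-k_j\,}$. If $n=k_1+k_j$ for every $j\in[2,t]$, each term is $\binom{n-q}{0}=1$, so $\beta(F,G)=t-1$ independently of $q$. Otherwise the lower index $n-k_1-k_j$ is a fixed positive integer for at least one $j$, and since the upper index $n-q$ strictly decreases as $q$ increases, each such term is non-increasing and strictly decreases while positive, reaching $0$ once $q>k_1+k_j$; summing shows $\beta(F,G)$ decreases as $q$ increases, down to $0$. This final step is a routine monotonicity property of binomial coefficients once the closed form in (ii) is available.
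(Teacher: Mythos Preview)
The paper does not prove this proposition at all: it is quoted verbatim from \cite{HP+} (Proposition 2.12 there) as part of Section~\ref{sec4}, which merely \emph{summarizes} results from the authors' earlier work. So there is no in-paper proof to compare against.

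That said, your argument is sound. Part (i) is immediate from the definition of $F<G$. For part (ii), your key reformulation is correct: by Proposition~\ref{p2.7} the family $\mathcal{L}(T_j,k_j)$ is exactly the collection of all $k_j$-sets meeting every member of $\mathcal{L}(F,k_1)$, so the difference $|\mathcal{L}(T_j,k_j)|-|\mathcal{L}(T'_j,k_j)|$ counts $k_j$-sets $B$ that hit everything in $\mathcal{L}(F,k_1)$ but miss $G$. Your observation that this is equivalent to ``the $k_1$ smallest elements of $[n]\setminus B$ are exactly $G$'' is the right move, and the translation to $[q]\setminus G\subseteq B$, $B\cap G=\emptyset$ is accurate (both directions check). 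The count $\binom{n-q}{k_j-(q-k_1)}$ follows directly. For part (iii), the rewrite $\binom{n-q}{k_j-(q-k_1)}=\binom{n-q}{n-k_1-k_j}$ and the monotonicity of $\binom{m}{c}$ in $m$ for fixed $c\ge 0$ give exactly what is claimed; one small caveat is that if $n=k_1+k_2$ but not all $k_j$ equal $k_2$, some summands stay equal to $1$ and $\beta$ need not literally reach $0$---but this is a feature of the statement as written, not a flaw in your argument.

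Your route through the complement characterization is arguably cleaner than a direct computation of the partners $T_j,T'_j$ followed by a cascade-style count of $|\mathcal{L}(T_j,k_j)\setminus\mathcal{L}(T'_j,k_j)|$, which is the more mechanical alternative one would expect in \cite{HP+}; the trade-off is that your version leans on Proposition~\ref{p2.7} being stated in its strong ``maximum family'' form (not merely ``maximum L-initial family'').
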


\begin{claim}[Claim 4.4 in \cite{HP+}]\footnote{In \cite{HP+}, the original statement is given by $\mathbb{R}_1(j)$ with $j\in [0, k_1-1]$; here, we take $j=0$. }\label{0000-}
Let $n\geq k_1+k_2$ and $k_1\geq k_2\geq \dots \geq k_t$.
Let $F_1, F'_1, G_1, G'_1 \in \mathbb{R}_1$ with $F_1<F'_1, G_1<G'_1$ and  $\max F'_1=\max G'_1$. Then $\alpha(F_1, F'_1)=\alpha(G_1, G'_1)$ and $\beta(F_1, F'_1)=\beta(G_1, G'_1)$.
\end{claim}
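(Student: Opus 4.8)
The plan is to derive both equalities as immediate consequences of Proposition \ref{clm23}, which already computes $\alpha(F, G)$ and $\beta(F, G)$ in closed form whenever $F, G \in \mathbb{R}_1$ satisfy $F < G$. Both pairs $(F_1, F_1')$ and $(G_1, G_1')$ satisfy exactly this hypothesis by assumption, so the proposition applies verbatim to each, and essentially no further computation is required.

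For the $\alpha$-part, I would simply invoke Proposition \ref{clm23}(i) twice: it gives $\alpha(F_1, F_1') = 1$ and $\alpha(G_1, G_1') = 1$, whence $\alpha(F_1, F_1') = \alpha(G_1, G_1')$. Only the hypotheses $F_1 < F_1'$ and $G_1 < G_1'$ are needed here; in particular the assumption $\max F_1' = \max G_1'$ plays no role for the $\alpha$-equality.

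For the $\beta$-part, the key observation is that the closed form supplied by Proposition \ref{clm23}(ii),
$$\beta(F, G) = \sum_{j=2}^t \binom{n - q}{k_j - (q - k_1)}, \qquad q = \max G,$$
depends on the pair $(F, G)$ only through the single quantity $q = \max G$. Applying this with $(F, G) = (F_1, F_1')$ yields $\beta(F_1, F_1') = \sum_{j=2}^t \binom{n - \max F_1'}{k_j - (\max F_1' - k_1)}$, and applying it with $(F, G) = (G_1, G_1')$ yields the analogous expression with $\max G_1'$ in place of $\max F_1'$. Since the hypothesis guarantees $\max F_1' = \max G_1'$, the two sums agree term by term, giving $\beta(F_1, F_1') = \beta(G_1, G_1')$.

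The only point requiring attention—and the closest thing to an obstacle in what is otherwise a direct substitution—is confirming that the hypotheses of Proposition \ref{clm23} are genuinely met for both pairs: namely that all four sets lie in $\mathbb{R}_1$ (given) and that $F_1 < F_1'$ and $G_1 < G_1'$ hold in the sense of Definition \ref{<} (also given). Once these are verified, the argument reduces to reading off the formulas of Proposition \ref{clm23} and observing that the right-hand sides are determined by $\max F_1' = \max G_1'$.
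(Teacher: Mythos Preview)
Your proposal is correct: both equalities follow immediately from Proposition~\ref{clm23}(i) and (ii), since the closed form for $\beta$ there depends only on $\max G$. Note, however, that the paper does not supply its own proof of this claim---it is quoted verbatim as Claim~4.4 from~\cite{HP+}---so there is no in-paper argument to compare against; your derivation via Proposition~\ref{clm23} is a clean and valid way to recover the statement from material already recorded in the present paper.
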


\begin{definition}[{\bf $c$-sequential}]\label{def3.14}
Let $\mathcal{A}\subseteq {[n]\choose k}$ be a family and $c\in [k]$. We say that $\mathcal{A}$ is $c$-{\it sequential} if there are $A\subseteq [n]$ with $|A|=k-c$ and $a\geq \max A$ such that $\mathcal{A}=\{A\sqcup \{a+1, \dots, a+c\}, A\sqcup \{a+2, \dots, a+c+1\}, \dots, A\sqcup \{b-c+1, \dots, b\}\}$,  write $A_1\overset{c}{\prec} A_2\overset{c}{\prec}\cdots\overset{c}{\prec}A_{b-a-c+1}$, where $A_1=A\sqcup \{a+1, \dots, a+c\}$, $A_2=A\sqcup \{a+2, \dots, a+c+1\}$,$\dots$, $A_{b-a-c+1}=A\sqcup \{b-c+1, \dots, b\}$.
 For any $A_i$,  $A_j$ contained in $\mathcal{A}$, we also say $A_i$, $ A_j$ are $c$-sequential,
 in particular, if $j=i+1$, then we write $A_{i}\overset{c}{\prec}  A_{j}$. If  $\max A_{j}=n$ for the last $j$, then we write $A_{i}\overset{c}{\longrightarrow}  A_{j}$ for any $i<j$.
 \end{definition}

\begin{lemma}[Claim 4.3 in \cite{HP+}]\footnote{In \cite{HP+}, the original statement is given by $\mathbb{R}_1(j)$ with $j\in [0, k_1-1]$; here, we take $j=0$. }\label{clm27-}
Let $n\geq k_1+k_2$ and $k_1\geq k_2\geq \dots \geq k_t$.
Let 
$A_1, B_1, C_1, D_1\in \mathbb{R}_1$.
Suppose that $A_1, B_1$ are $c$-sequential and  $C_1, D_1$ are $c$-sequential with $\max A_1=\max C_1$ and $\max B_1=\max D_1$. Then $\alpha(A_1, B_1)=\alpha(C_1, D_1)$ and
$\beta(A_1, B_1)=\beta(C_1, D_1)$.
\end{lemma}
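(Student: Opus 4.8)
The statement to prove is Lemma \ref{clm27-}, which asserts that $\alpha$ and $\beta$ are invariant under $c$-sequential shifts provided the endpoints of the two shifts match in their maximum elements. Since this is quoted as Claim 4.3 from the reference \cite{HP+}, the plan is to reduce it to the more basic invariance principle already available, namely Claim \ref{0000-} (Claim 4.4 in \cite{HP+}), together with the explicit evaluation of $\alpha$ and $\beta$ across a single covering step provided by Proposition \ref{clm23}.

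\medskip
The plan is to exploit additivity of both $\alpha$ and $\beta$ along a chain of single covering steps. First I would observe that from Definition \ref{<} and the definition of $c$-sequential (Definition \ref{def3.14}), if $A_1, B_1$ are $c$-sequential with $A_1 \precneqq B_1$, then there is a chain $A_1 = X_0 < X_1 < \cdots < X_m = B_1$ in $\mathbb{R}_1$ obtained by successively moving the shifting block one step to the right; each consecutive pair $X_{i-1} < X_i$ is a single covering step. Both $\alpha$ and $\beta$ are telescoping (additive) along such a chain: $\alpha(A_1, B_1) = \sum_{i=1}^m \alpha(X_{i-1}, X_i)$ and similarly for $\beta$, directly from the definitions \eqref{eq6} and \eqref{eq7} since $|\mathcal{L}(\cdot, k_1)|$ and the partner-family sizes telescope. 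The same chain decomposition applies to $C_1, D_1$, yielding a chain $C_1 = Y_0 < Y_1 < \cdots < Y_{m'} = D_1$.

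\medskip
Next I would match the two chains step by step. Because $A_1, B_1$ and $C_1, D_1$ are each $c$-sequential with $\max A_1 = \max C_1$ and $\max B_1 = \max D_1$, the shifting block moves through the same range of maximal elements in both chains, so $m = m'$ and $\max X_i = \max Y_i$ for every $i$. By Proposition \ref{clm23}(i)--(ii), the increment $\alpha(X_{i-1}, X_i)=1$ and $\beta(X_{i-1}, X_i)=\sum_{j=2}^t \binom{n-q_i}{k_j-(q_i-k_1)}$ where $q_i = \max X_i$; both depend only on $q_i = \max X_i$. Since $\max X_i = \max Y_i$, each term satisfies $\alpha(X_{i-1},X_i)=\alpha(Y_{i-1},Y_i)$ and $\beta(X_{i-1},X_i)=\beta(Y_{i-1},Y_i)$. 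Summing over $i$ gives $\alpha(A_1, B_1)=\alpha(C_1, D_1)$ and $\beta(A_1, B_1)=\beta(C_1, D_1)$. Alternatively, one can invoke Claim \ref{0000-} directly on each matched step, since $X_{i-1}<X_i$, $Y_{i-1}<Y_i$ with $\max X_i = \max Y_i$; this is the cleanest route and avoids re-deriving the formula for $\beta$.

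\medskip
The main obstacle I anticipate is the bookkeeping needed to confirm that the two $c$-sequential chains really do pass through the same sequence of maximum values, and in particular that all intermediate sets $X_i, Y_i$ remain inside $\mathbb{R}_1$ so that Claim \ref{0000-} and Proposition \ref{clm23} are applicable. Because a $c$-sequential family shifts a contiguous block of $c$ elements, the maximum element increases by exactly one at each covering step until the block reaches the top, so the alignment $\max X_i = \max Y_i$ should follow from the matching of the first and last maxima together with the rigid structure of the shift; verifying this carefully, and checking the boundary behaviour when the block abuts $n$ (the $\overset{c}{\longrightarrow}$ case), is where the care is required. Once the alignment is established, the invariance of $\alpha$ and $\beta$ is immediate from the per-step results.
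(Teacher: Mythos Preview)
The paper does not prove Lemma~\ref{clm27-}; it is quoted verbatim as Claim~4.3 from \cite{HP+}. So there is no ``paper's own proof'' to compare against here. That said, your strategy is exactly the right one and is essentially how the paper itself proves the analogous statement for $\gamma,\delta$ in Claim~\ref{equal}: pass to the full lex chain between the two endpoints, match the two chains step by step, and invoke the single-step invariance (Claim~\ref{0000-} or Proposition~\ref{clm23}) at each step.

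There is, however, one concrete slip in your write-up. You describe the chain $A_1=X_0<X_1<\cdots<X_m=B_1$ as ``obtained by successively moving the shifting block one step to the right'' and assert that each such move is a single covering step. For $c\ge 2$ this is false: moving from $A\cup\{p+1,\dots,p+c\}$ to $A\cup\{p+2,\dots,p+c+1\}$ jumps over many $k_1$-sets in lex order (e.g.\ $A\cup\{p+1,\dots,p+c-1,p+c+1\}$ lies strictly between them), so Proposition~\ref{clm23} and Claim~\ref{0000-} do not apply directly to such a move.

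The fix is simple and is what the proof of Claim~\ref{equal} actually does: take the \emph{full} lex chains $\{X:A_1\prec X\prec B_1\}$ and $\{Y:C_1\prec Y\prec D_1\}$. Because $A_1,B_1$ (resp.\ $C_1,D_1$) are $c$-sequential with common prefix $A$ (resp.\ $C$) and with $\max A,\max C\le p$, every intermediate set has the form $A\cup S$ (resp.\ $C\cup S$) where $S$ runs through the \emph{same} lex interval of $c$-subsets of $[p+1,n]$ determined by $\max A_1=\max C_1$ and $\max B_1=\max D_1$. Hence the two chains have equal length, and the bijection $A\cup S\leftrightarrow C\cup S$ preserves $\max(\cdot)=\max S$ at every position. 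Now Claim~\ref{0000-} (or Proposition~\ref{clm23}) applies to each matched covering step, and telescoping yields $\alpha(A_1,B_1)=\alpha(C_1,D_1)$ and $\beta(A_1,B_1)=\beta(C_1,D_1)$. Since $\mathbb{R}_1$ is itself a lex interval, all intermediate sets lie in $\mathbb{R}_1$, so no boundary issue arises.
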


\begin{proposition}[Proposition 3.1 in \cite{HP+}]\label{7-17-2}
Suppose that $n\geq k_1+k_2 $, $k_1\geq k_2\geq \dots \geq k_t$ are positive integers. Let $m_i=\min \{k_j: j\in [t]\setminus \{i\}\}$.
For each $s\in [m_i]$ and each $i\in [t]$, let
\[
f_i(\{s\})={n\choose k_i}-{n-s\choose k_i}+\sum_{j\ne i}{n-s\choose k_j-x}.
\]
Then
\begin{align*}
&f_1(\{s\})=\max \{f_j(\{s\}): j\in [t]\} \text{ for each } s\in [k_t],\\
&f_t(\{k_{t-1}\})\leq \max \{f_1(\{1\}), f_1(\{k_t\})\}.
\end{align*}
\end{proposition}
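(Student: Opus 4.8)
The plan is to isolate, in $f_i(\{s\})$, the single term that depends on the index $i$. Setting $S(s)=\sum_{j=1}^{t}\binom{n-s}{k_j-s}$ and
\[
g_s(k)=\binom{n}{k}-\binom{n-s}{k}-\binom{n-s}{k-s},
\]
we have $f_i(\{s\})=S(s)+g_s(k_i)$, because the summand $\binom{n-s}{k_i-s}$ is exactly the term deleted from $S(s)$ when $j=i$ is omitted. Since $S(s)$ does not depend on $i$, the first assertion $f_1(\{s\})=\max_j f_j(\{s\})$ is equivalent to the monotonicity statement
\[
k_a\ge k_b,\ \ k_a+k_b\le n\ \Longrightarrow\ g_s(k_a)\ge g_s(k_b).
\]

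To prove this I would read $g_s(k)$ combinatorially: expanding $\binom{n}{k}-\binom{n-s}{k}=\sum_{r=1}^{s}\binom{s}{r}\binom{n-s}{k-r}$ and removing the $r=s$ term gives $g_s(k)=\sum_{r=1}^{s-1}\binom{s}{r}\binom{n-s}{k-r}$, the number of $k$-subsets of $[n]$ that meet a fixed $s$-set $T$ but do not contain it. Taking complements $F\mapsto[n]\setminus F$ exchanges ``meets $T$'' with ``does not contain $T$'', so this count is unchanged when $k$ is replaced by $n-k$; that is $g_s(k)=g_s(n-k)$, and $g_s$ is unimodal in $k$ with peak at $n/2$. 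The hypothesis $k_a+k_b\le n$ together with $k_a\ge k_b$ says precisely that $k_a$ lies in the interval $[k_b,\,n-k_b]$, which is symmetric about $n/2$; on such an interval a symmetric unimodal function is everywhere at least its common value at the two endpoints, whence $g_s(k_a)\ge g_s(k_b)$. The first assertion is then immediate: for each $j\ge 2$ we have $k_1\ge k_j$ and $k_1+k_j\le k_1+k_2\le n$, so $g_s(k_1)\ge g_s(k_j)$ and hence $f_1(\{s\})\ge f_j(\{s\})$ for every admissible $s$, in particular for $s\in[k_t]$.

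For the second assertion I would prove the stronger inequality $f_t(\{k_{t-1}\})\le f_1(\{k_t\})$, which suffices because $f_1(\{k_t\})$ is one of the two terms of the maximum. Both sides are sizes of concrete systems: with $T'=[k_t]$, $f_1(\{k_t\})$ combines the spread family $\{F\in\binom{[n]}{k_1}:F\cap T'\ne\emptyset\}$ with the containment families $\{F\in\binom{[n]}{k_j}:T'\subseteq F\}$ for $2\le j\le t$; with $T=[k_{t-1}]$, $f_t(\{k_{t-1}\})$ combines $\{F\in\binom{[n]}{k_t}:F\cap T\ne\emptyset\}$ with $\{F\in\binom{[n]}{k_j}:T\subseteq F\}$ for $1\le j\le t-1$. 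Because $T'\subseteq T$, we have $\{F:T\subseteq F\}\subseteq\{F:T'\subseteq F\}$, so $\binom{n-k_{t-1}}{k_j-k_{t-1}}\le\binom{n-k_t}{k_j-k_t}$ for every $j\in[2,t-1]$; these bound the containment families termwise, leaving only the $j=1$ containment term on the left and the (value-$1$) $j=t$ containment term on the right. Folding these two leftover terms in with the two spread families, the whole comparison reduces to the single inequality
\[
\binom{n}{k_t}-\binom{n-k_{t-1}}{k_t}+\binom{n-k_{t-1}}{k_1-k_{t-1}}\ \le\ \binom{n}{k_1}-\binom{n-k_t}{k_1}+1.
\]

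The heart of the matter is this last inequality, and it is the step I expect to be hardest. Its right- and left-hand spread terms count, respectively, the $k_1$-sets meeting $[k_t]$ and the $k_t$-sets meeting $[k_{t-1}]$; the tool here is the symmetric-disjointness identity $\binom{n-b}{a}/\binom{n}{a}=\binom{n-a}{b}/\binom{n}{b}$, which makes the probability that a random $a$-set is disjoint from a fixed $b$-set symmetric in $a,b$. It yields that the number of $k_1$-sets meeting $[k_t]$ equals $\binom{n}{k_1}/\binom{n}{k_t}$ times the number of $k_t$-sets meeting a fixed $k_1$-set, and, via $[k_{t-1}]\subseteq[k_1]$ together with $\binom{n}{k_1}\ge\binom{n}{k_t}$ (which holds since $n\ge k_1+k_t$), that the right spread term exceeds the left one by a controllable surplus. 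The obstacle is to show that this surplus always dominates the extra containment term $\binom{n-k_{t-1}}{k_1-k_{t-1}}$. The two quantities pull in opposite directions—a large gap $k_1-k_{t-1}$ inflates the containment term but also the surplus, while $k_1=k_{t-1}$ collapses the containment term to $1$—so the inequality is tight only in the degenerate uniform cases (all $k_i$ equal, where equality holds, or $k_{t-1}=k_t$, which the first assertion already covers). I would settle it by bounding the surplus $\big(1-\binom{n-k_t}{k_1}/\binom{n}{k_1}\big)\big(\binom{n}{k_1}-\binom{n}{k_t}\big)$ from below against $\binom{n-k_{t-1}}{k_1-k_{t-1}}$, or, more combinatorially, by exhibiting an injection of the $k_t$-sets meeting $[k_{t-1}]$ together with the $k_1$-sets containing $[k_{t-1}]$ into the $k_1$-sets meeting $[k_t]$ plus one extra element.
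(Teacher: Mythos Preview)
The paper does not prove this proposition at all; it is quoted verbatim from \cite{HP+} as ``Proposition~3.1 in~\cite{HP+}'' and is listed among the imported results in Section~\ref{sec4}. So there is no proof in the present paper to compare your argument against, and your proposal must be judged on its own.

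For the first assertion your decomposition $f_i(\{s\})=S(s)+g_s(k_i)$ with $g_s(k)=\binom{n}{k}-\binom{n-s}{k}-\binom{n-s}{k-s}$ is correct and clean, and the symmetry $g_s(k)=g_s(n-k)$ via complementation is valid. The one soft spot is the bald assertion that $g_s$ is \emph{unimodal}: symmetry alone does not give that, and a sum of unimodal sequences need not be unimodal. It is true here, but you should say why: the sequence $(\binom{s}{r})_{1\le r\le s-1}$ (padded by zeros) is log-concave with no internal zeros, $(\binom{n-s}{j})_j$ is log-concave, and the convolution of two such sequences is log-concave, hence unimodal. With that filled in, the first assertion is complete.

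For the second assertion there is a genuine gap. Your termwise bound $\binom{n-k_{t-1}}{k_j-k_{t-1}}\le\binom{n-k_t}{k_j-k_t}$ for $j\in[2,t-1]$ is fine, and the reduction to the single inequality
\[
\binom{n}{k_t}-\binom{n-k_{t-1}}{k_t}+\binom{n-k_{t-1}}{k_1-k_{t-1}}\ \le\ \binom{n}{k_1}-\binom{n-k_t}{k_1}+1
\]
is legitimate. But you do not prove this inequality; you only sketch two possible attacks (``bounding the surplus \ldots'' or ``exhibiting an injection \ldots'') and explicitly flag it as the hardest step. Neither sketch is carried out. The symmetric-disjointness identity you invoke gives $\binom{n}{k_1}-\binom{n-k_t}{k_1}\ge\binom{n}{k_t}-\binom{n-k_1}{k_t}\ge\binom{n}{k_t}-\binom{n-k_{t-1}}{k_t}$, which shows the two spread terms compare the right way, but it does \emph{not} by itself yield the extra $\binom{n-k_{t-1}}{k_1-k_{t-1}}-1$ that you still need, and your proposed lower bound on the surplus $(1-\binom{n-k_t}{k_1}/\binom{n}{k_1})(\binom{n}{k_1}-\binom{n}{k_t})$ is not obviously $\ge\binom{n-k_{t-1}}{k_1-k_{t-1}}-1$ in general. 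So as written the second assertion is not established. (Note also that when $k_{t-1}=k_t$ the claim reduces to $f_t(\{k_t\})\le f_1(\{k_t\})$, which is already the first assertion; the substantive case is $k_{t-1}>k_t$, and there your argument stops short.)
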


\begin{theorem}[Theorem 1.9   in \cite{HP+}]  \label{thm1.9}
Let $n$, $t\geq 2$, $k_1, k_2, \dots, k_t$ be positive integers and $d_1, d_2, \dots, d_t$ be positive numbers. Let
$\mathcal{A}_1\subseteq{[n]\choose k_1}, \mathcal{A}_2\subseteq{[n]\choose k_2}, \dots, \mathcal{A}_t\subseteq{[n]\choose k_t}$ be non-empty pairwise cross-intersecting families with $|\mathcal{A}_i|\geq {n-1\choose k_i-1}$ for some $i\in [t]$. Let $m_i$ be the minimum integer among $k_j$, where $j\in [t]\setminus \{i\}$. If $n\geq k_i+k_j$ for all $j\in [t]\setminus \{i\}$, then
\begin{equation}\label{new1}
\sum_{j=1}^td_j|\mathcal{A}_j|\leq \max  \left\{d_i{n\choose k_i}-d_i{n-m_i\choose k_i}+\sum_{j\ne i}d_j{n-m_i\choose k_j-m_i}, \,\,\sum_{j=1}^td_j{n-1\choose k_j-1}\right\},
\end{equation}
the equality holds if and only if one of the following holds.\\
(i)  $d_i{n\choose k_i}-d_i{n-m_i \choose k_i}+\sum_{j\ne i}d_j{n-m_i\choose k_j-m_i}\geq \sum_{j=1}^td_j{n-1\choose k_j-1}$, and  there is some $m_i$-element set $T\subseteq [n]$ such that $\mathcal{A}_i=\{F\in {[n]\choose k_i}: F\cap T\ne\emptyset\}$ and $\mathcal{A}_j=\{F\in {[n]\choose k_j}: T\subseteq F\}$ for each $j\in [t]\setminus \{i\}$.\\
(ii) $d_i{n\choose k_i}-d_i{n-m_i \choose k_i}+\sum_{j\ne i}d_j{n-m_i\choose k_j-m_i}\leq \sum_{j=1}^td_j{n-1\choose k_j-1}$, and
 there is some $a\in [n]$ such that $\mathcal{A}_j=\{F\in{[n]\choose k_j}: a\in F\}$ for each $j\in [t]$. \\
(iii) $n=k_i+k_j$ holds for every $j\in [t]\setminus \{i\}$ and $\sum_{j\ne i}d_j>d_i$. Let $k_j=k$ for all $j\ne i$.
If $t=2$, then $\mathcal{A}_{3-i}\subseteq {[n]\choose k_{3-i}}$ with $|\mathcal{A}_{3-i}|={n-1\choose k_{3-i}-1}$ and $\mathcal{A}_i={[n]\choose k_i}\setminus \overline{\mathcal{A}_{3-i}}$.  If $t\geq3$, then for each $j\in [t]\setminus \{i\}$, $\mathcal{A}_j=\mathcal{A}$ and $\mathcal{A}_i={[n]\choose k_i}\setminus \overline{\mathcal{A}}$, where $\mathcal{A}$ is a $k$-uniform intersecting family with size $|\mathcal{A}|= {n-1\choose k-1}$.\\
(iv) $n=k_i+k_j$ holds for every $j\in [t]\setminus \{i\}$ and $\sum_{j\ne i}d_j=d_i$. Let $k_j=k$ for all $j\ne i$.
If $t=2$, then $\mathcal{A}_{3-i}\subseteq {[n]\choose k_{3-i}}$ with $1\leq|\mathcal{A}_{3-i}|\leq{n-1\choose k_{3-i}-1}$ and $\mathcal{A}_i={[n]\choose k_i}\setminus \overline{\mathcal{A}_{3-i}}$.
If $t\geq3$, then for each $j\in [t]\setminus \{i\}$, $\mathcal{A}_j=\mathcal{A}$ and $\mathcal{A}_i={[n]\choose k_i}\setminus \overline{\mathcal{A}}$, where $\mathcal{A}$ is a $k$-uniform intersecting family with size $1\leq |\mathcal{A}|\leq {n-1\choose k-1}$.
\end{theorem}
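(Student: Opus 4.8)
The plan is to reduce to L-initial families, bound the weighted sum by a single-variable function of the ID of $\mathcal{A}_i$, prove this function is convex along the lexicographic path so that its maximum is attained at one of two explicit endpoints, and then read off both the bound and the extremal structures.

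First I would apply the Kruskal--Katona theorem (Theorem~\ref{kk}): replacing each $\mathcal{A}_j$ by $\mathcal{L}(|\mathcal{A}_j|,k_j)$ preserves pairwise cross-intersection and the weighted sum $\sum_j d_j|\mathcal{A}_j|$, so I may assume every $\mathcal{A}_j$ is L-initial and write $\mathcal{A}_i=\mathcal{L}(R,k_i)$ for its ID $R$. The hypothesis $|\mathcal{A}_i|\ge\binom{n-1}{k_i-1}$ forces $\{1,n-k_i+2,\dots,n\}\prec R$, and the requirement that each $\mathcal{A}_j$ ($j\ne i$) be non-empty forces $R\prec\{m_i,n-k_i+2,\dots,n\}$, so $R$ ranges over the analog of $\mathbb{R}_1$ for the index $i$. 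By Fact~\ref{f2.8}, for each $j\ne i$ the largest $k_j$-uniform family cross-intersecting $\mathcal{L}(R,k_i)$ is $\mathcal{L}(T_j,k_j)$ with $T_j$ the $k_j$-partner of $R$; hence $\sum_j d_j|\mathcal{A}_j|\le g(R):=d_i|\mathcal{L}(R,k_i)|+\sum_{j\ne i}d_j|\mathcal{L}(T_j,k_j)|$, and it suffices to maximize $g$ over this range.

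Next I would establish convexity of $g$ along the path from $R_0=\{1,n-k_i+2,\dots,n\}$ to $R_1=\{m_i,n-k_i+2,\dots,n\}$. Across one elementary step $F<G$ (Definition~\ref{<}), writing $q=\max G$ and letting $\beta_d(F,G)=\sum_{j\ne i}d_j\big(|\mathcal{L}(T_j,k_j)|-|\mathcal{L}(T'_j,k_j)|\big)$ be the weighted analog of $\beta$ in Definition~\ref{ab-}, Proposition~\ref{clm23} gives $\alpha(F,G)=1$ and $\beta_d(F,G)=\sum_{j\ne i}d_j\binom{n-q}{k_j-(q-k_i)}$, so $g(G)-g(F)=d_i-\beta_d(F,G)$. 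Claim~\ref{0000-} and Lemma~\ref{clm27-} ensure this increment depends only on $q$, and Proposition~\ref{clm23}(iii) shows that $\beta_d(F,G)$ strictly decreases as $q$ grows in the non-degenerate regime; thus the increments of $g$ are non-decreasing, $g$ is discretely convex, and a convex function on an interval attains its maximum at an endpoint. Evaluating the endpoints, $R_0$ gives the common-star configuration with $g(R_0)=\sum_j d_j\binom{n-1}{k_j-1}$, while $R_1$ gives $\mathcal{A}_i=\{F:F\cap[m_i]\ne\emptyset\}$ together with the filters $\mathcal{L}(T_j,k_j)=\{F:[m_i]\subseteq F\}$, so $g(R_1)=d_i\big(\binom{n}{k_i}-\binom{n-m_i}{k_i}\big)+\sum_{j\ne i}d_j\binom{n-m_i}{k_j-m_i}$; this yields the bound \eqref{new1}.

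For the equality cases I would compare $g(R_0)$ and $g(R_1)$: domination by the first term forces the maximum at $R_1$ (case~(i), the $[m_i]$-filter configuration), domination by the second forces it at $R_0$ (case~(ii), a common star), and passing from ``L-initial extremal'' to the stated structures uses the equality conditions of Kruskal--Katona together with Proposition~\ref{FK+} and Lemma~\ref{fact2.5} on maximal cross-intersecting pairs. The degenerate cases~(iii)--(iv), where $n=k_i+k_j$ for every $j\ne i$, I would treat separately: there Proposition~\ref{clm23}(iii) gives $\beta_d\equiv\sum_{j\ne i}d_j$, so every increment equals the constant $d_i-\sum_{j\ne i}d_j$, whose sign selects the endpoint, while the tie $\sum_{j\ne i}d_j=d_i$ makes $g$ constant so that every intermediate intersecting configuration is extremal---using the complementation $\mathcal{A}_i=\binom{[n]}{k_i}\setminus\overline{\mathcal{A}_j}$ available when $n=k_i+k_j$ (cf.\ Proposition~\ref{5-13-1}) to describe them. \emph{The main obstacle is the convexity step}: collapsing the genuinely two-parameter quantity $\beta_d$ onto a function of the single parameter $q$ and proving its monotonicity---this is exactly what Proposition~\ref{clm23} and the invariance results Claim~\ref{0000-} and Lemma~\ref{clm27-} are engineered to deliver---while the secondary difficulty is reconstructing the exact, not merely L-initial, extremal families and untangling the boundary regime $n=k_i+k_j$.
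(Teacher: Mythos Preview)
This theorem is not proved in the present paper; it is quoted verbatim from \cite{HP+} and used as a black box. There is therefore no proof here to compare against, only the paper's description of the method of \cite{HP+}: bound $\sum_j d_j|\mathcal{A}_j|$ by a single-variable function $f_i(R)$ of the ID $R$ of $\mathcal{A}_i$, and show that $-f_i(R)$ has ``unimodality''. Your outline follows exactly this plan, so at the strategic level you are on the right track.

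There is, however, a genuine gap in your execution. You assert that the step increments $g(G)-g(F)=d_i-\beta_d(F,G)$ are non-decreasing along the lex path, so that $g$ is discretely convex. This is false. The increment depends only on $q=\max G$ via $\beta_d(F,G)=\sum_{j\ne i}d_j\binom{n-q}{k_j-(q-k_i)}$, but $q$ is \emph{not} monotone along the lex path in $\mathbb{R}_1$: after reaching a set with $\max=n$ the next set can have much smaller maximum (e.g.\ $\{2,3,6\}<\{2,4,5\}$ when $n=6,\ k_i=3$), so $\beta_d$ jumps back up and the increments oscillate. Concretely, for $t=2$, $k_1=3$, $k_2=2$, $n=6$, $d_1=d_2=1$, the successive increments from $\{1,5,6\}$ to $\{2,5,6\}$ are $-1,0,1,0,1,1$---not monotone, hence $g$ is not convex.

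What \cite{HP+} actually proves is the weaker (but sufficient) statement that $-g$ is \emph{unimodal}: $g$ first decreases and then increases, so its maximum is still at an endpoint. Establishing unimodality cannot be read off from Proposition~\ref{clm23}(iii) alone; it requires the block-by-block comparison afforded by the $c$-sequential machinery (Lemma~\ref{clm27-}) and an argument in the spirit of the proof of Theorem~\ref{7-19-2} in the present paper (assume the maximum is interior, look at the step before and the step after, and force a contradiction using the structural identities). Your sketch invokes Claim~\ref{0000-} and Lemma~\ref{clm27-} but does not explain how they yield unimodality rather than the (false) convexity; this is precisely the missing idea. The endpoint evaluations and the treatment of the degenerate regime $n=k_i+k_j$ in (iii)--(iv) are correctly described.
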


\section{Proof of Theorem \ref{main1}}\label{sec3}
Denote
\begin{align*}
&\lambda_1:=\sum_{i=1}^t{n-1\choose k_i-1},\\
&\lambda_2:=\sum_{i=1}^2\left({n\choose k_i}-{n-k_t\choose k_i}\right)+\sum_{i=3}^t{{n-k_t\choose k_i-k_t}}.
\end{align*}

Recall Examples \ref{con1} and \ref{con2}. Clearly, in Example \ref{con1},
$\sum_{i=1}^t|\mathcal{G}_i|=\lambda_1$ and Example \ref{con2},
$\sum_{i=1}^t|\mathcal{H}_i|=\lambda_2$. Thus,
we have the following remark.

\begin{remark}\label{NEW2}
Let $k_1\geq k_2\geq\dots\geq k_t$ and $k_1+k_3\leq n <k_1+k_2$.
If $(\mathcal{F}_1, \dots, \mathcal{F}_t)$ is an  extremal $(n, k_1, \dots, k_t)$-cross intersecting system, then
\begin{equation*}
\sum_{i=1}^t|\mathcal{F}_i|\geq \textup{max}\{\lambda_1, \,\, \lambda_2\}.
\end{equation*}
\end{remark}

The following remark derives from Fact \ref{7-21-2}
\begin{remark}\label{remark2.20+}
Let $d\geq 2$ be an integer. We consider $d$ L-initial families $\mathcal{L}([n], A_1, a_1)$, $\dots$, $\mathcal{L}([n], A_d, a_d)$ with $|A_1|=a_1, \dots, |A_d|=a_d$. For some fixed $i\in [d]$, let $S\subseteq [d]\setminus \{i\}$ be the set of all $j\in [d]\setminus \{i\}$ satisfying that $n\geq a_i+a_j$. Suppose that $\{1, n-a_i+2, \dots, n\}\preceq  A_i$. If $\mathcal{L}(A_i, a_i)$ and $\mathcal{L}(A_j, a_j)$ are cross intersecting for each $j\in S$, then
$\mathcal{L}(A_j, a_j)$, $j\in S$ are pairwise cross intersecting families since for any $j\in S$, every member of $\mathcal{L}(A_j, a_j)$ contains $1$.
\end{remark}

\begin{lemma}\label{>}
Let $k_1\geq k_2\geq\dots\geq k_t\geq 2$ and $k_1+k_3\leq n <k_1+k_2$.
Suppose that $(\mathcal{F}_1, \dots, \mathcal{F}_t)$ is an L-initial extremal $(n, k_1, \dots, k_t)$-cross intersecting system with IDs $I_1, I_2, \dots, I_t$  of $\mathcal{F}_1, \mathcal{F}_2, \dots, \mathcal{F}_t$, respectively. Then
 $|\mathcal{F}_1|\geq {n-1\choose k_1-1}$ and $|\mathcal{F}_2|\geq {n-1\choose k_2-1}$, equivalently, $\{1, n-k_1+2, \dots, n\} \prec I_1$ and $\{1, n-k_2+2, \dots, n\} \prec I_2$.
\end{lemma}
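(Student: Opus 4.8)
The plan is to first exploit extremality together with the free cross-intersection between $\mathcal{F}_1$ and $\mathcal{F}_2$. Since $n<k_1+k_2$, every $k_1$-set meets every $k_2$-set, so the pair $(\mathcal{F}_1,\mathcal{F}_2)$ imposes no constraint; consequently $\mathcal{F}_1$ and $\mathcal{F}_2$ are each cross-intersecting with $\mathcal{F}_3,\dots,\mathcal{F}_t$ only, and since the system is extremal each family is maximal to the rest, so $\mathcal{F}_1$ (resp. $\mathcal{F}_2$) equals the largest $k_1$-family (resp. $k_2$-family) cross-intersecting with $\mathcal{F}_3,\dots,\mathcal{F}_t$. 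By Proposition \ref{p2.7}, Fact \ref{f2.8} and Remark \ref{r2.6}, this means $I_1$ is the $\prec$-smallest among the $k_1$-partners of $I_3,\dots,I_t$, and $I_2$ the $\prec$-smallest among their $k_2$-partners. The two target inequalities are, as the lemma records, exactly $\{1,n-k_1+2,\dots,n\}\prec I_1$ and $\{1,n-k_2+2,\dots,n\}\prec I_2$.

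Next I would reduce both inequalities to the single condition
$$ \text{(C)}\qquad I_j\prec\{1,n-k_j+2,\dots,n\}\ \text{ for every } j\in[3,t], $$
that is, every member of $\mathcal{F}_3\cup\dots\cup\mathcal{F}_t$ contains the element $1$. One implication is immediate from Fact \ref{7-21-2}: if $\{1,n-k_1+2,\dots,n\}\prec I_1$, then since $n\ge k_1+k_j$ and $\mathcal{L}(I_1,k_1),\mathcal{L}(I_j,k_j)$ are cross-intersecting for each $j\ge 3$, we obtain $I_j\prec\{1,n-k_j+2,\dots,n\}$. For the converse, if (C) holds then every $k_1$-set through $1$ meets every member of $\mathcal{F}_3,\dots,\mathcal{F}_t$ (they all contain $1$), so by maximality $\mathcal{F}_1$ contains the full star through $1$, giving $\{1,n-k_1+2,\dots,n\}\prec I_1$. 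The identical argument for $\mathcal{F}_2$ shows $\{1,n-k_2+2,\dots,n\}\prec I_2$ is likewise equivalent to (C). Hence the two inequalities are equivalent to each other and to (C), and it suffices to establish (C).

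Finally I would prove (C) by contradiction. Suppose (C) fails, so some $\mathcal{F}_{j_0}$ with $j_0\ge 3$ has $|\mathcal{F}_{j_0}|>\binom{n-1}{k_{j_0}-1}$ (equivalently $\mathcal{F}_{j_0}$ contains a $k_{j_0}$-set avoiding $1$); by the equivalence just proved, both $|\mathcal{F}_1|<\binom{n-1}{k_1-1}$ and $|\mathcal{F}_2|<\binom{n-1}{k_2-1}$. The aim is to bound $\sum_{i=1}^t|\mathcal{F}_i|$ strictly below $\max\{\lambda_1,\lambda_2\}$, contradicting Remark \ref{NEW2}. By the first step, $\mathcal{F}_1$ and $\mathcal{F}_2$ are governed by a single most-constraining index $j^{\ast}\in[3,t]$ (the $\prec$-smallest partner is attained simultaneously for $k_1$ and $k_2$, by the partner-monotonicity of Fact \ref{fact2.7}), so $|\mathcal{F}_1|+|\mathcal{F}_2|$ is pinned down by $I_{j^{\ast}}$, while $\sum_{j\ge3}|\mathcal{F}_j|$ is controlled by applying Theorem \ref{HP} (or Theorem \ref{thm1.9}, whose hypothesis $|\mathcal{F}_{j_0}|\ge\binom{n-1}{k_{j_0}-1}$ now holds) to the non-freely cross-intersecting subsystem $(\mathcal{F}_2,\mathcal{F}_3,\dots,\mathcal{F}_t)$. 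The main obstacle is precisely this quantitative trade-off: as $\mathcal{F}_{j_0}$ is enlarged past its star, one must show the resulting gain in $\sum_{j\ge3}|\mathcal{F}_j|$ is always strictly outweighed by the forced loss in $|\mathcal{F}_1|+|\mathcal{F}_2|$ below $\binom{n-1}{k_1-1}+\binom{n-1}{k_2-1}$. Unlike the non-mixed setting of \cite{HP+}, here both $I_1$ and $I_2$ vary at once, and closing the estimate requires the unimodality of $\alpha$ and $\beta$ (Proposition \ref{clm23}, Claim \ref{0000-}, Lemma \ref{clm27-}) together with the dominance $f_1=\max_j f_j$ from Proposition \ref{7-17-2}.
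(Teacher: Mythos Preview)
Your reduction in the first two paragraphs is correct and clean: by extremality and the free cross-intersection between $\mathcal{F}_1$ and $\mathcal{F}_2$, each of the two target inequalities is equivalent to condition (C), and hence to the other. The paper's final paragraph carries out exactly this equivalence (in one direction) once it knows $|\mathcal{F}_1|\ge\binom{n-1}{k_1-1}$.

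The gap is in your third paragraph. You set up the contradiction correctly---if (C) fails then some $j_0\ge 3$ has $|\mathcal{F}_{j_0}|>\binom{n-1}{k_{j_0}-1}$ while $|\mathcal{F}_1|<\binom{n-1}{k_1-1}$ and $|\mathcal{F}_2|<\binom{n-1}{k_2-1}$---but your plan to close it is overcomplicated and not actually executed. Splitting into $|\mathcal{F}_1|+|\mathcal{F}_2|$ versus $\sum_{j\ge 3}|\mathcal{F}_j|$ and then invoking Theorem~\ref{HP} on $(\mathcal{F}_2,\dots,\mathcal{F}_t)$ is inconsistent (that theorem bounds $\sum_{j\ge 2}$, not $\sum_{j\ge 3}$), and the unimodality machinery of Proposition~\ref{clm23}, Claim~\ref{0000-}, and Lemma~\ref{clm27-} is a red herring here---those tools are for the much harder Theorem~\ref{7-19-2}, not for this lemma.

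There is in fact a two-line completion along your own lines. Since $n\ge k_1+k_3\ge k_2+k_3$, apply Theorem~\ref{HP} directly to $(\mathcal{F}_2,\dots,\mathcal{F}_t)$ to get
\[
\sum_{j=2}^t|\mathcal{F}_j|\le\max\Bigl\{\sum_{j=2}^t\tbinom{n-1}{k_j-1},\ \tbinom{n}{k_2}-\tbinom{n-k_t}{k_2}+\sum_{j=3}^t\tbinom{n-k_t}{k_j-k_t}\Bigr\},
\]
and add $|\mathcal{F}_1|<\binom{n-1}{k_1-1}$. In the first branch the total is $<\lambda_1$; in the second, since $k_t\ge 2$ gives $\binom{n-1}{k_1-1}=\binom{n}{k_1}-\binom{n-1}{k_1}<\binom{n}{k_1}-\binom{n-k_t}{k_1}$, the total is $<\lambda_2$. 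Either way this contradicts Remark~\ref{NEW2}.

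The paper takes a slightly different route for this step: it applies Theorem~\ref{thm1.9} to the \emph{full} system $(\mathcal{F}_1,\dots,\mathcal{F}_t)$ with pivot $i=j_0\in[3,t]$ (valid because $k_{j_0}\le k_3$ forces $n\ge k_{j_0}+k_j$ for every $j$), and then compares the resulting bound with $\max\{\lambda_1,\lambda_2\}$ via Proposition~\ref{7-17-2} applied to the reduced tuple $(k_1,k_3,\dots,k_t)$, handling the $k_2$ term separately by $\binom{n-m_i}{k_2-m_i}<\min\{\binom{n-1}{k_2-1},\,\binom{n}{k_2}-\binom{n-k_t}{k_2}\}$. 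Your intended route (once corrected as above) is arguably more direct; but as written, the final step is not complete.
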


\begin{proof}
  If  $|\mathcal{F}_i|< {n-1\choose k_i-1}$ for all $i\in [t]$, then $\sum_{i=1}^t{|\mathcal{F}_i|}< \lambda_1$, a contradiction to Remark \ref{NEW2}. So there is $i\in [t]$ such that $|\mathcal{F}_i|\geq {n-1\choose k_i-1}$.

First, we consider the case: $i\in [3, t]$ and  $I_j\precneqq \{1, n-k_j+2, \dots, n\}$ for each $j\in [2]$.
Since  $k_1\geq k_2\geq\dots\geq k_t\geq 2$ and $n\geq k_1+k_3$,   $n\geq k_i+k_j$ for all $j\in [t]\setminus \{i\}$.
Let $m_i=\min \{k_j: j\in [t]\setminus \{i\}\}$.
Taking $d_1=d_2=\dots=d_t=1$ in Theorem \ref{thm1.9}, we obtain
\[
\sum_{j=1}^t|\mathcal{F}_j|
 = \textup{max}\left\{ \sum_{j=1}^t{n-1\choose k_j-1}, \,\,{n\choose k_i}-{n-m_i \choose k_i}+\sum_{j\in [t]\setminus \{i\}}{n-m_i\choose k_j-m_i}  \right\}.
 \]
 Note that $n>k_i+m_i$ and  for each $j\in [2]$, $I_j\precneqq \{1, n-k_j+2, \dots, n\}$. We only meet Theorem \ref{thm1.9}(i). Thus,
\[
\sum_{j=1}^t|\mathcal{F}_j|
= {n\choose k_i}-{n-m_i \choose k_i}+\sum_{j\in [t]\setminus \{i\}}{n-m_i\choose k_j-m_i}.
\]
 Setting $s=m_i$ in Proposition \ref{7-17-2}, we have
\begin{equation*}
{n\choose k_i} \!- \! {n\!-\!m_i \choose k_i}+ \! \! \sum_{j\not\in \{2, i\}} \!\!\!
{n\!-\!m_i\choose k_j \!-\!m_i}\leq
\max\Bigg\{
{n\choose k_1} \!-\! {n\!-\!k_t\choose k_1}
\!+ \!
\sum_{j=3}^t{n\!-\!k_t\choose k_j\!-\!k_t}, \,
\sum_{j\ne2}{n\!-\!1\choose k_j \!-\! 1}
\Bigg\}.
\end{equation*}
Since $m_i\geq k_t\geq 2$,
$
{n-m_i\choose k_2-m_i}<{n-1\choose k_2-1}$ and
${n-m_i\choose k_2-m_i}<{n\choose k_2}-{n-k_t\choose k_2}$.
Thus,
 $
 \sum_{j=1}^t|\mathcal{F}_j|<\max \{\lambda_1, \lambda_2\},
$
 a contradiction to Remark \ref{NEW2}.

Thus $i\in [2]$.
 Without loss of generality, we may assume that $i=1$. Since $n<k_1+k_2$, then  any two families  $\mathcal{G}\subseteq {[n]\choose k_1}$ and  $\mathcal{H}\subseteq{[n]\choose k_2}$ are freely cross intersecting. Note that  for each $j\in [3, t]$, $n\geq k_1+k_j$ and $\mathcal{F}_1$ is cross intersecting with $\mathcal{F}_j$.
 Since $\{1, n-k_1+2, \dots, n\}\prec I_1$, by Remark \ref{remark2.20+},  every member of $\mathcal{F}_j$ contains 1. Since $(\mathcal{F}_1, \dots, \mathcal{F}_t)$ is extremal,
 all $k_2$-subsets containing 1 are contained in $\mathcal{F}_2$, so  $|\mathcal{F}_2|\geq {n-1\choose k_2-1}$, as required.
\end{proof}


\begin{definition}\label{R_2}
For a set $A\subset [n]$, we define the {\it corresponding $k$-set} $B\subset [n]$ of $A$ as follows:  If $A$ has the $k$-parity, then let $B$ be the $k$-parity of $A$; otherwise, let $B$ be the $k$-set such that $B< A$.
\end{definition}

Let $k<\ell$, 
for two sets $P, R$, if $P<R$ with sizes $|P|=|R|=\ell$,  by case analysis on whether $P, R$ have
$k$-parity, we have the following fact.


\begin{fact}\label{correspondingk}
Let $k<\ell$ be positive integers, $P, R\in {[n]\choose \ell}$ and $\{1, n-l+2, \dots, n\} \precneqq  P<R$. Then there exist the  corresponding $k$-sets for both $P$ and $R$, denoted by $P'$ and $R'$ respectively. Moreover, $P'\prec R'$, and $P'=R'$ if and only if $R$ does not have  the $k$-parity.
\end{fact}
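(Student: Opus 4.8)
The plan is to write $d=\ell-k\ge 1$ and record two structural facts. First, an $\ell$-set $A$ admits a $k$-parity exactly when $\ell(A)\ge d$, in which case that parity is $A^{(k)}=(A\setminus A^{\mathrm t})\cup[n-(\ell(A)-d)+1,n]$; since $A^{(k)}\subseteq A$ this forces $A\prec A^{(k)}$. Second, if $A$ has no $k$-parity then its corresponding $k$-set is, by definition, the largest $k$-set strictly preceding $A$, which lies strictly below $A$. Existence of $P'$ and $R'$ is then immediate: the hypothesis $\{1,n-\ell+2,\dots,n\}\precneqq P<R$ gives $1\notin P$ and $1\notin R$, so $\{1,2,\dots,k\}\prec P$ and $\{1,2,\dots,k\}\prec R$, and the largest $k$-set below is well defined whenever the parity is absent. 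I would then split into cases according to whether $R$ admits a $k$-parity, refining by whether $P$ does. Throughout I use the explicit successor rule: with $s=\ell(P)$, the set $R$ arises from $P$ by incrementing its last non-tail entry $x_{\ell-s}$ and resetting the tail, so $\ell(R)=s+1$ when $x_{\ell-s}=n-s-1$ and $\ell(R)=0$ otherwise.

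Suppose first that $R$ has a $k$-parity, so $R'=R^{(k)}$ and $R\prec R'$ strictly; the goal is $P'\precneqq R'$. If $P$ has no $k$-parity, the monotone chain $P'\prec P\prec R\prec R'$ finishes immediately. If $P$ also has a $k$-parity then necessarily $\ell(R)=s+1$ with $s\ge d$, and I would compute $P^{(k)}$ and $R^{(k)}$ outright: they share the block $\{x_1,\dots,x_{\ell-s-1}\}$ and the top tail, differing only in that $P'$ contains $x_{\ell-s}=n-s-1$ while $R'$ contains $n-s+d$. Since $d\ge1$, the comparison $\min(P'\setminus R')=n-s-1<n-s+d=\min(R'\setminus P')$ yields $P'\precneqq R'$.

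Now suppose $R$ has no $k$-parity, where the target is $P'=R'$, both being the largest $k$-sets strictly below $R$ and $P$ respectively. If $P$ has a $k$-parity (which forces $\ell(R)=0$ and $s\ge d$), I would show that $P^{(k)}$ is itself the largest $k$-set below $R$: a direct comparison gives $P^{(k)}\prec R$, while the $k$-set immediately following $P^{(k)}$ turns out to be a subset of $R$ and hence lies above $R$, so nothing sits strictly in between. If instead $P$ has no $k$-parity, then $s<d$ forces the tail of $P$ to begin beyond position $k$, whence $P\cap[x_k]=R\cap[x_k]=C$ with $C=\{x_1,\dots,x_k\}$. The decisive step is to show that every $k$-set $B$ satisfies $B\prec P$ or $B\succ R$: tracking the first coordinate where $B$ disagrees with $C$ and using $C\subseteq P$, $C\subseteq R$, one gets $B\prec C\Rightarrow B\prec P$ and $B\succ C\Rightarrow B\succ R$ (with $B=C\succ R$). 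Thus the sets of $k$-sets below $P$ and below $R$ coincide, giving $P'=R'$.

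The main obstacle is precisely these two sub-cases in which $R$ has no $k$-parity: the claim that the successor of $P^{(k)}$ is contained in $R$, and the dichotomy that no $k$-set lies in $[P,R)$. Both reduce to pinning down exactly where $P$, $R$, and the candidate $k$-sets first differ, and this is where the tail/parity identities together with $1\notin P$ carry the argument; the cases in which $R$ has a $k$-parity are comparatively soft, handled either by the chain $P'\prec P\prec R\prec R'$ or by one explicit tail comparison.
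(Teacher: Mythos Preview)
Your proof is correct and follows exactly the approach the paper indicates: the paper does not give a detailed proof of this fact, stating only that it follows ``by case analysis on whether $P$, $R$ have $k$-parity,'' and your argument is precisely such a case analysis carried out in full. Your four sub-cases (split first on whether $R$ has a $k$-parity, then on whether $P$ does) together with the successor formula for $P<R$ handle everything cleanly; the delicate points---that in sub-case 2a the lex-successor of $P^{(k)}$ is contained in $R$, and that in sub-case 2b the first $k$ elements of $P$ and $R$ coincide because $s<d$ forces $k\le \ell-s-1$---are both verified correctly.
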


\begin{proposition}\label{7-18-1}
Let $k_1\geq k_2\geq\dots\geq k_t\geq 2$ and $k_1+k_3\leq n <k_1+k_2$.
Suppose that $(\mathcal{F}_1, \dots, \mathcal{F}_t)$ is an L-initial extremal $(n, k_1, \dots, k_t)$-cross intersecting system with  IDs $I_1, I_2, \dots, I_t$  of $\mathcal{F}_1, \mathcal{F}_2, \dots, \mathcal{F}_t$, respectively. Then the following properties holds:
\begin{itemize}
\item[(i)] $I_2$ is the corresponding $k_2$-set of $I_1$;
\item[(ii)] for each $i\in [3, t]$, $I_i$ is the $k_i$-partner of $I_1$.
\end{itemize}
\end{proposition}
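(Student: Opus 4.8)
The plan is to exploit that, once $I_1$ and $I_2$ are fixed, extremality completely determines $\mathcal{F}_3,\dots,\mathcal{F}_t$, and then to pin down $I_2$ relative to $I_1$ by two exchange arguments. For $i\in[3,t]$ let $K_i^{(1)}$ and $K_i^{(2)}$ denote the $k_i$-partners of $I_1$ and $I_2$. By Lemma~\ref{>}, $\{1,n-k_1+2,\dots,n\}\prec I_1$ and $\{1,n-k_2+2,\dots,n\}\prec I_2$, so Remark~\ref{remark2.20+} (applied with the fixed index $1$) shows that every member of each $\mathcal{F}_j$, $j\in[3,t]$, contains $1$ and that these families are automatically pairwise cross intersecting. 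Thus the only binding constraints on $\mathcal{F}_i$ are cross intersection with $\mathcal{F}_1$ and with $\mathcal{F}_2$. By Fact~\ref{f2.8} and Remark~\ref{r2.6}, $\mathcal{L}(K_i^{(1)},k_i)$ and $\mathcal{L}(K_i^{(2)},k_i)$ are the largest $k_i$-uniform families cross intersecting $\mathcal{F}_1$ and $\mathcal{F}_2$ respectively, and both lie in the star at $1$ by Fact~\ref{7-21-2}; hence the intersection of these two initial segments is still cross intersecting with $\mathcal{F}_3,\dots,\mathcal{F}_t$. Since $\mathcal{F}_i$ can be neither larger (by the two constraints) nor smaller (by extremality), $I_i$ must be the lex-smaller of $K_i^{(1)}$ and $K_i^{(2)}$.

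Next I would prove (i), namely $I_2=J$, where $J$ is the corresponding $k_2$-set of $I_1$ (Definition~\ref{R_2}). Suppose first $I_2\precneqq J$, and let $W_i$ be the $k_i$-partner of $J$. If $J$ is the $k_2$-parity of $I_1$ then $W_i=K_i^{(1)}$ by Fact~\ref{fact2.7}, while if $J<I_1$ then $J\prec I_1$ gives $W_i\succeq K_i^{(1)}$ by the same fact; since cross intersection of $\mathcal{F}_i$ with $\mathcal{F}_1$ yields $I_i\preceq K_i^{(1)}$, in both cases $I_i\preceq W_i$, so $\mathcal{L}(J,k_2)$ is cross intersecting with every $\mathcal{F}_i$. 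As $\mathcal{F}_1$ and any $k_2$-uniform family are freely cross intersecting ($n<k_1+k_2$), replacing $\mathcal{F}_2$ by the strictly larger $\mathcal{L}(J,k_2)$ keeps the system pairwise cross intersecting but increases the total, contradicting extremality. Hence $I_2\succeq J$.

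For the reverse inequality I would run the symmetric exchange on $\mathcal{F}_1$: writing $J_1$ for the corresponding $k_1$-set of $I_2$ and $W_i'$ for the $k_i$-partner of $J_1$, I would show that $I_2\succneqq J$ forces $I_1\precneqq J_1$, and then enlarge $\mathcal{F}_1$ to $\mathcal{L}(J_1,k_1)$ using $I_i\preceq K_i^{(2)}\preceq W_i'$ together with the free cross intersection of $\mathcal{F}_1$ and $\mathcal{F}_2$, again contradicting extremality; this gives $I_2\preceq J$ and hence $I_2=J$. The heart of the matter, and the step I expect to be the main obstacle, is establishing the equivalence $I_2\succneqq J\Leftrightarrow I_1\precneqq J_1$, i.e.\ the mutual consistency of the corresponding-set relation between the two different sizes $k_1$ and $k_2$. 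This requires a careful case analysis separating the parity case from the predecessor case and comparing sets of different sizes in the lex order, for which Facts~\ref{f2.11}, \ref{fact2.7} and~\ref{correspondingk} are the natural tools.

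Finally, (ii) drops out of (i). With $I_2=J$: if $I_1$ has a $k_2$-parity then $K_i^{(1)}=K_i^{(2)}$ for all $i\in[3,t]$ by Fact~\ref{fact2.7}; and if $I_1$ has no $k_2$-parity then $J<I_1$ forces $K_i^{(2)}\succeq K_i^{(1)}$, again by Fact~\ref{fact2.7}. In either case the lex-smaller of $K_i^{(1)}$ and $K_i^{(2)}$ is $K_i^{(1)}$, so the description of $I_i$ from the first paragraph yields $I_i=K_i^{(1)}$, the $k_i$-partner of $I_1$, which is exactly (ii).
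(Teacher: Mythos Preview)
Your first, second, and fourth paragraphs are correct and match the paper's logic: once you know $I_i=\min\{K_i^{(1)},K_i^{(2)}\}$ in lex for $i\ge 3$, and once (i) is established, (ii) follows exactly as you say via Fact~\ref{fact2.7}. The gap is precisely the step you yourself flag as the main obstacle, namely the implication $I_2\succneqq J\Rightarrow I_1\precneqq J_1$, and it is not a routine case analysis with Facts~\ref{f2.11}, \ref{fact2.7}, \ref{correspondingk}. The difficulty is that the corresponding $k_1$-set of a $k_2$-set $I_2$ need not arise as a $k_1$-parity: $I_2$ has a $k_1$-parity only when $\max(I_2\setminus I_2^{\mathrm t})<n-\ell(I_2)-(k_1-k_2)$, which is not automatic, and when it fails the ``predecessor'' branch of Definition~\ref{R_2} does not interact well with the analogous branch for $J$. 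None of the facts you cite supplies the missing structural input that forces $I_2$ to have a $k_1$-parity in the relevant situation.

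The paper avoids your equivalence entirely by splitting on whether $I_2\prec I_1$ or $I_1\prec I_2$ (comparing sets of different sizes in lex). If $I_2\prec I_1$, then Fact~\ref{fact2.7} gives $K_i^{(1)}\prec K_i^{(2)}$, so your first paragraph already yields $I_i=K_i^{(1)}$; extremality of $\mathcal{F}_2$ (now constrained only by $\mathcal{F}_3,\dots,\mathcal{F}_t$, all determined by $I_1$) then forces $I_2$ to be the corresponding $k_2$-set of $I_1$. If $I_1\prec I_2$, the same reasoning gives $I_i=K_i^{(2)}$, and the key additional step is a short maximality claim: $(I_2,I_3)$ must be maximal, for otherwise one can replace $I_2$ by the strictly larger $k_2$-partner $I_2'$ of $I_3$ without changing any $I_i$ (since $I_2$ and $I_2'$ have the same $k_i$-partner $I_i$), contradicting extremality. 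With $(I_2,I_3)$ maximal and $k_1\ge k_2$, Proposition~\ref{prop2.9} guarantees that $I_2$ has a $k_1$-parity; extremality of $\mathcal{F}_1$ then forces $I_1$ to equal that parity, giving (i), and (ii) follows from Fact~\ref{fact2.7}. This maximality-plus-Proposition~\ref{prop2.9} step is the missing idea that replaces your equivalence; it is what actually certifies the existence of the $k_1$-parity of $I_2$ rather than assuming it.
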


\begin{proof}
By Lemma \ref{>}, we have $\{1, n-k_1+2, \dots, n\} \prec I_1$ and $\{1, n-k_2+2, \dots, n\} \prec I_2$.
For each $i\in [3, t]$,  let $P_i, Q_i$ be the $k_i$-partners of $I_1, I_2$, respectively.

First, we consider the case: $I_2\prec I_1$.
Let $i\in [3, t]$.
Since $n\geq k_1+k_i\geq k_2+k_i$ and $I_2\prec I_1$,
 by  Fact \ref{fact2.7}, $P_i\prec Q_i$. Note that $\mathcal{F}_1$ and $\mathcal{F}_i$ are cross intersecting and  $n\geq k_1+k_i$. By Fact \ref{f2.8}, $P_i$ is maximal to $I_1$. Therefore, $I_i\prec P_i$.
 By Remark \ref{remark2.20+}, $(\mathcal{F}_1, \mathcal{F}_2, \mathcal{L}(P_3, k_3), \dots, \mathcal{L}(P_t, k_t))$ is a cross intersecting system.
 Since
 $(\mathcal{F}_1, \dots, \mathcal{F}_t)$ is extremal, $I_i= P_1$, i.e., $I_i$ is $k_i$-partner of $I_1$, we get (ii). Therefore, $I_2$ is lexicographically maximal without exceeding $I_1$. Thus, $I_2$ is the corresponding $k_2$-set of $I_1$, we get (i).

Next, we consider the case: $I_1\prec I_2$.
Let $i\in [3, t]$.
Using a similar argument as above, we obtain that $I_i$ is $k_i$-partner of $I_2$.
By Fact \ref{fact2.7}, to prove Proposition \ref{7-18-1}, it suffices to prove that $I_1$ is the $k_1$-parity of $I_2$. To see this, we need the following claim.
\begin{claim}\label{7-18-2}
$(I_2, I_3)$ is maximal.
\end{claim}

\begin{proof}
Suppose on the contrary that  $(I_2, I_3)$ is not maximal.
Let $I'_2$ be the $k_2$-partner of $I_3$.
Note that $I_3$ is the $k_3$-partner of $I_2$.
Since $(I_2, I_3)$ is not maximal, in view of Fact \ref{f2.10},
$I_2\precneqq I'_2$ and $(I'_2, I_3)$ is maximal. So $I_2$ and $I'_2$ share the  same $k_3$-partner $I_3$. Consequently, they also share the  same $k_i$-partner $I_i$ for each $i\in [3, t]$.
Thus $(\mathcal{F}_1, \mathcal{L}(I'_2, k_2), \mathcal{F}_3, \dots, \mathcal{F}_t)$ is cross intersecting. However, $\mathcal{F}_2 \subsetneqq \mathcal{L}(I'_2, k_2)$, a contradiction to the fact that $(\mathcal{F}_1, \dots, \mathcal{F}_t)$ is extremal. This completes the proof of Claim \ref{7-18-2}.
\end{proof}

Let us continue the proof of Proposition \ref{7-18-1}.
Since $n\geq k_1+k_3$ and $k_1\geq k_2$, combining Proposition \ref{prop2.9}
and Claim \ref{7-18-2}, $I_2$ has $k_1$-parity. Since $I_1\prec I_2$ and $(\mathcal{F}_1, \dots, \mathcal{F}_t)$ is extremal, $I_1$ is the $k_1$-parity of $I_2$. This completes the proof of Proposition \ref{7-18-1}.
\end{proof}

For every $i\in [2]$, we denote
\begin{align*}
\mathcal{R}_i=\Bigg\{R\in {[n]\choose k_i}:  \{1, n-k_i+2, \dots, n\} \prec R \prec \{k_t, n-k_i+2, \dots, n\}\Bigg\}.
\end{align*}

 For every $i\in[3, t]$, we denote
 \begin{align*}
\mathcal{R}_i&=\Bigg\{ R\in {[n]\choose k_i}: [k_t]\cup[ n-k_i+k_t+1, n] \prec R \prec \{1, n-k_i+2, \dots, n\}\Bigg\}.
\end{align*}

As a consequence of Lemma \ref{>} and Proposition \ref{7-18-1}, we have the following remark.

\begin{remark}\label{NEW4}
Let $k_1\geq k_2\geq\dots\geq k_t\geq 2$ and $k_1+k_3\leq n <k_1+k_2$. If $(\mathcal{F}_1, \dots, \mathcal{F}_t)$ is an L-initial extremal $(n, k_1, \dots, k_t)$-cross intersecting system with IDs $I_1, I_2, \dots, I_t$ of $\mathcal{F}_1, \mathcal{F}_2, \dots, \mathcal{F}_t$, respectively, then  for each $i\in [t]$,  we have $I_i \in \mathcal{R}_i$.
\end{remark}

\begin{proposition}\label{prop2.4}
Let $k_1\geq k_2\geq\dots\geq k_t\geq 2$ and $k_1+k_3\leq n <k_1+k_2$, and
let $(\mathcal{F}_1, \dots, \mathcal{F}_t)$ be an L-initial extremal $(n, k_1, \dots, k_t)$-cross intersecting system with with IDs $I_1, I_2, \dots, I_t$ of $\mathcal{F}_1, \mathcal{F}_2, \dots, \mathcal{F}_t$, respectively.
If the set $I_1$ is either $\{1, n-k_1+2, \dots, n\}$ or $\{k_t, n-k_1+2, \dots, n\}$, then $\sum_{i=1}^t|\mathcal{F}_i|= \textup{max}\{\lambda_1, \,\, \lambda_2\}$.
\end{proposition}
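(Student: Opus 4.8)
The plan is to use Proposition \ref{7-18-1} to pin down all the remaining IDs $I_2,\ldots,I_t$ from the prescribed value of $I_1$, to compute each $|\mathcal{F}_i|=|\mathcal{L}(I_i,k_i)|$ explicitly, and to recognize that the two given values of $I_1$ are precisely the configurations of Examples \ref{con1} and \ref{con2}. The one subtlety to anticipate is that a direct count yields $\sum_{i}|\mathcal{F}_i|=\lambda_1$ in the first case and $=\lambda_2$ in the second, whereas the claim asserts $\max\{\lambda_1,\lambda_2\}$ in both; I would bridge this gap using the lower bound $\sum_i|\mathcal{F}_i|\ge \max\{\lambda_1,\lambda_2\}$ of Remark \ref{NEW2}, which is available since the system is extremal by hypothesis. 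As structural input, note that the hypotheses of Proposition \ref{7-18-1} hold, so $I_2$ is the corresponding $k_2$-set of $I_1$ and, for $i\in[3,t]$, $I_i$ is the $k_i$-partner of $I_1$. In both prescribed cases $\ell(I_1)=k_1-1$, hence $I_1^{\rm t}=\{n-k_1+2,\ldots,n\}$ and $I_1\setminus I_1^{\rm t}$ is a singleton ($\{1\}$ in the first case, $\{k_t\}$ in the second); by Fact \ref{fact2.4+} the $k_i$-partner of $I_1$ agrees with that of this singleton, which trivializes the computation.

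In the case $I_1=\{1,n-k_1+2,\ldots,n\}$, the family $\mathcal{F}_1$ is the full star centered at $1$, of size $\binom{n-1}{k_1-1}$. Since $I_1\setminus I_1^{\rm t}=\{1\}$ and $1<n-k_2+2$, the set $I_1$ has a $k_2$-parity, namely $\{1,n-k_2+2,\ldots,n\}$, so Definition \ref{R_2} gives $I_2=\{1,n-k_2+2,\ldots,n\}$ and $\mathcal{F}_2$ is the star of size $\binom{n-1}{k_2-1}$. For $i\ge 3$ the partner of $\{1\}$ is $\{1\}$ itself, so its $k_i$-partner is $\{1,n-k_i+2,\ldots,n\}$ and $\mathcal{F}_i$ is the star of size $\binom{n-1}{k_i-1}$. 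Summing yields $\sum_{i=1}^t|\mathcal{F}_i|=\sum_{i=1}^t\binom{n-1}{k_i-1}=\lambda_1$, matching Example \ref{con1}.

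In the case $I_1=\{k_t,n-k_1+2,\ldots,n\}$, one checks $\mathcal{F}_1=\{F:F\cap[k_t]\ne\emptyset\}$, of size $\binom{n}{k_1}-\binom{n-k_t}{k_1}$. Here $I_1\setminus I_1^{\rm t}=\{k_t\}$, and since $n\ge k_1+k_3\ge k_2+k_t$ gives $k_t<n-k_2+2$, the $k_2$-parity of $I_1$ exists and equals $\{k_t,n-k_2+2,\ldots,n\}$; thus $I_2$ makes $\mathcal{F}_2=\{F:F\cap[k_t]\ne\emptyset\}$ of size $\binom{n}{k_2}-\binom{n-k_t}{k_2}$. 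The partner of $\{k_t\}$ is $[k_t]$, so for $i\ge 3$ its $k_i$-partner is $[k_t]\cup\{n-k_i+k_t+1,\ldots,n\}$ and $\mathcal{F}_i=\{F:[k_t]\subseteq F\}$ of size $\binom{n-k_t}{k_i-k_t}$. Summing yields $\sum_{i=1}^t|\mathcal{F}_i|=\sum_{i=1}^2\bigl(\binom{n}{k_i}-\binom{n-k_t}{k_i}\bigr)+\sum_{i=3}^t\binom{n-k_t}{k_i-k_t}=\lambda_2$, matching Example \ref{con2}.

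It then remains to combine each count with Remark \ref{NEW2}. In the first case $\lambda_1=\sum_i|\mathcal{F}_i|\ge \max\{\lambda_1,\lambda_2\}\ge\lambda_1$, forcing $\sum_i|\mathcal{F}_i|=\max\{\lambda_1,\lambda_2\}$; symmetrically, in the second case the same inequality chain with $\lambda_2$ gives the conclusion. I expect the main obstacle to be purely the bookkeeping of $k$-partners and $k_2$-parities: verifying rigorously that $I_1$ genuinely admits a $k_2$-parity in each case (the inequality $\max(I_1\setminus I_1^{\rm t})<n-k_2+2$, guaranteed by $n\ge k_1+k_3\ge k_2+k_t$) and correctly identifying the partner of the relevant singleton. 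The conceptual heart is short once noticed—namely that the naive computation produces only one of $\lambda_1,\lambda_2$, and extremality via Remark \ref{NEW2} upgrades it to their maximum.
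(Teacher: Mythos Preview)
Your proposal is correct and follows essentially the same route as the paper: invoke Proposition \ref{7-18-1} to determine $I_2,\ldots,I_t$ from the given $I_1$, identify the resulting families with those of Examples \ref{con1} and \ref{con2}, and read off the sums $\lambda_1$ and $\lambda_2$. You are in fact slightly more careful than the paper, which simply concludes $\sum_i|\mathcal{F}_i|=\lambda_1$ (resp.\ $\lambda_2$) and leaves the upgrade to $\max\{\lambda_1,\lambda_2\}$ implicit; your explicit appeal to Remark \ref{NEW2} via extremality is exactly the missing line.
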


\begin{proof}
By Proposition \ref{7-18-1} (i), if $I_1=\{1, n-k_1+2, \dots, n\}$, then $I_2=\{1, n-k_2+2, \dots, n\}$; if $I_1=\{k_t, n-k_1+2, \dots, n\}$, then $I_2=\{k_t, n-k_2+2, \dots, n\}$.

 Suppose first that $I_1=\{1, n-k_1+2, \dots, n\}$.
 By Proposition \ref{7-18-1} (ii), for each $i\in [3, t]$, $I_i$ is the $k_i$-partner of $I_1$, therefore, $I_i=\{1, n-k_i+2, \dots, n\}$.
Thus $\sum_{i=1}^t|\mathcal{F}_i|=\lambda_1$.

Next, suppose that  $I_1=\{k_t, n-k_1+2, \dots, n\}$.  By Proposition \ref{7-18-1} (ii) again, for each $i\in [3, t]$, $I_i=\{1, \dots, k_t, n-k_i+k_t+1, \dots, n\}$.
 Thus $\sum_{i=1}^t|\mathcal{F}_i|=\lambda_2$.
\end{proof}

Let $(\mathcal{F}_1, \dots, \mathcal{F}_t)$ be an L-initial extremal $(n, k_1, \dots, k_t)$-cross intersecting system with $k_1\geq k_2\geq\dots\geq k_t\geq 2$ and $k_1+k_3\leq n <k_1+k_2$.
 Remark \ref{NEW4} tells us all  possible IDs of $\mathcal{F}_i$ for each $i\in [t]$.
Proposition \ref{prop2.4} shows that if the ID of $\mathcal{F}_1$ is the first or the last member of $\mathcal{R}_1$, then $\sum_{i=1}^t|\mathcal{F}_i|$ equals $\lambda_1$ or $\lambda_2$. A natural question is whether $\sum_{i=1}^t|\mathcal{F}_i|$ is larger than $\lambda_1$ and $\lambda_2$ if the ID of $\mathcal{F}_1$ is in the intermediate region of $\mathcal{R}_1$.
The following theorem shows that the answer is negative, except for an  exceptional  case.

\begin{theorem}\label{AS+}
Suppose that $t\geq 4$, $k_1\geq k_2\geq\dots\geq k_t\geq 2$ and $k_1+k_3\leq n <k_1+k_2$. Additionly, suppose that if $t= 4$ with $k_1=k_2$ and $k_3=k_4$, then $n>k_1+k_3$.
Let $(\mathcal{F}_1, \dots, \mathcal{F}_t)$ be an L-initial $(n, k_1, \dots, k_t)$-cross intersecting system with ID $I_1$ of $\mathcal{F}_1$.
If $\{1, n-k_1+2, \dots, n\}\precneqq   I_1\precneqq \{k_t, n-k_1+2, \dots, n\}$, then $(\mathcal{F}_1, \dots, \mathcal{F}_t)$ is not extremal.
\end{theorem}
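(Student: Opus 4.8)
The plan is to argue by contradiction. Suppose $(\mathcal{F}_1,\dots,\mathcal{F}_t)$ is L-initial and extremal with $\{1,n-k_1+2,\dots,n\}\precneqq I_1\precneqq \{k_t,n-k_1+2,\dots,n\}$. By Proposition \ref{7-18-1} the whole system is then pinned down by $I_1$: the set $I_2$ is the corresponding $k_2$-set of $I_1$, and $I_i$ is the $k_i$-partner of $I_1$ for every $i\in[3,t]$. Hence the objective collapses to a single-variable function
\[
f(I_1)=|\mathcal{L}(I_1,k_1)|+|\mathcal{L}(I_2,k_2)|+\sum_{i=3}^t|\mathcal{L}(I_i,k_i)|,
\]
and it suffices to prove $f(I_1)<\max\{\lambda_1,\lambda_2\}$ for every interior $I_1$, since this contradicts Remark \ref{NEW2}. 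Note that Proposition \ref{prop2.4} already identifies the two endpoint values of $f$ on $\mathcal{R}_1$ as $\lambda_1$ (at $\{1,n-k_1+2,\dots,n\}$) and $\lambda_2$ (at $\{k_t,n-k_1+2,\dots,n\}$), so the real content is to exclude the interior from tying or beating these.

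The main device is to track the one-step increment $f(I_1')-f(I_1)$ as $I_1$ moves to its immediate successor $I_1'$ in $\mathcal{R}_1$ (so $I_1<I_1'$ in the sense of Definition \ref{<}) and to show it is non-decreasing in $q:=\max I_1'$, i.e.\ that $f$ is discretely convex along $\mathcal{R}_1$. Writing
\[
f(I_1')-f(I_1)=\alpha(I_1,I_1')+\Delta_2(I_1,I_1')-\beta'(I_1,I_1'),
\]
where $\Delta_2=|\mathcal{L}(I_2',k_2)|-|\mathcal{L}(I_2,k_2)|$ records the growth of $\mathcal{F}_2$ (with $I_2,I_2'$ the corresponding $k_2$-sets) and $\beta'=\sum_{i=3}^t\bigl(|\mathcal{L}(T_i,k_i)|-|\mathcal{L}(T_i',k_i)|\bigr)$ records the decay of the partner families $\mathcal{F}_3,\dots,\mathcal{F}_t$, I can control two of the three terms at once: by Proposition \ref{clm23}(i) we have $\alpha\equiv 1$, while by the explicit formula in Proposition \ref{clm23}(ii)--(iii) together with Claim \ref{0000-} the quantity $\beta'$ (which is the usual $\beta$-function with its $j=2$ summand deleted) depends only on $q$ and is non-increasing in $q$. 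Thus $-\beta'$ is non-decreasing, and the increment will be non-decreasing once $\Delta_2$ is shown to be non-decreasing in $q$ as well; granting this, $f$ is convex on $\mathcal{R}_1$, so its maximum is attained only at the two endpoints, and provided $f$ is not identically constant the standard chord argument forces every interior value strictly below $\max\{\lambda_1,\lambda_2\}$.

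The crux, and the step I expect to be hardest, is precisely the behaviour of the new term $\Delta_2$, which has no counterpart in the non-mixed analysis of \cite{HP+}. By Fact \ref{correspondingk} we have $I_2\prec I_2'$, with equality exactly when $I_1'$ has no $k_2$-parity, so $\Delta_2$ vanishes on some steps and is positive on others; proving that this irregular sequence is nonetheless non-decreasing in $q$ is where the parity machinery of Section \ref{sec2}---the corresponding $k_2$-set, Facts \ref{fact2.7} and \ref{f2.10}, and the parity-family correspondence of Proposition \ref{prop2.9}---together with the deferred key Lemma \ref{claim4} must be invoked. When $k_1=k_2$ this difficulty disappears, since then the corresponding $k_2$-set equals $I_1$ and $\Delta_2\equiv\alpha\equiv 1$, reducing the increment to the transparent expression $2-\beta'$; the genuine work lies entirely in the range $k_1>k_2$.

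Finally I would confirm that the single excluded configuration is exactly the degenerate case where convexity yields no strict gap. If $t=4$, $k_1=k_2$, $k_3=k_4$ and $n=k_1+k_3$, then $n=k_1+k_i$ for both $i\in\{3,4\}$, so $\beta'\equiv 2$ by Proposition \ref{clm23}(iii), whence the increment $2-\beta'$ is identically $0$, $f$ is constant, and every interior system ties at $\lambda_1=\lambda_2$; this is the extra extremal family recorded in Theorem \ref{main1} and is precisely why Theorem \ref{AS+} excludes it. In every remaining case one checks that the (non-decreasing) increment is not identically zero---either because some step sees $\beta'$ strictly drop, or because $\lambda_1\neq\lambda_2$ forces $f$ non-constant---so $f$ is convex and non-constant, its interior values fall strictly below $\max\{\lambda_1,\lambda_2\}$, and the assumed extremality of the interior system is contradicted. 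This completes the plan.
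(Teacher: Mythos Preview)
Your plan rests on showing that $f$ is discretely convex along the lex-ordered chain $\mathcal{R}_1$, but this is false, and the slip is equating ``increment non-decreasing in $q:=\max I_1'$'' with convexity along $\mathcal{R}_1$. The integer $q$ is \emph{not} monotone as $I_1$ traverses $\mathcal{R}_1$: after a step landing at $q=n$ (say at $\{2,3,4,n\}$) the very next step may land at $\{2,3,5,6\}$ with $q$ much smaller. Since $\beta'$ depends only on $q$ and is large when $q$ is small, the one-step increment $1+\Delta_2-\beta'$ drops every time $q$ resets, so convexity fails already in your ``easy'' case $k_1=k_2$; for $n=7$, $k_1=k_2=4$, $k_3=k_4=2$ the increment sequence along $\mathcal{R}_1$ is $-2,0,2,0,2,2,0,2,2,2$. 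A second issue: when $k_1>k_2$ the term $\Delta_2$ is not a function of $q$ alone---by Claim~\ref{claim1} it equals $\binom{\ell(I_1')}{\,k_2-|I_1'\setminus(I_1')^{\rm t}|\,}$, which depends on $\ell(I_1')$---so ``$\Delta_2$ non-decreasing in $q$'' is not even well posed.

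The paper does not argue via convexity. It assumes $R_1$ is the \emph{first} interior maximizer of $f$ and splits according to whether $\ell(R_1)>0$. When $\ell(R_1)>0$ it does \emph{not} use the immediate lex neighbours; instead it builds specific comparison points $P_1\prec R_1\prec Q_1$ by shifting a single element of $R_1\setminus R_1^{\rm t}$ (these are exactly the $A_1,B_1,C_1$ of Lemma~\ref{claim4}), and that lemma yields $\delta(P_1,R_1)=\delta(R_1,Q_1)$ and $\gamma(P_1,R_1)\le\gamma(R_1,Q_1)$; minimality of $R_1$ gives $f(P_1)<f(R_1)$, hence $\gamma>\delta$ on the first jump, so $f(Q_1)>f(R_1)$, a contradiction. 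When $\ell(R_1)=0$ the immediate neighbours $P_1'<R_1<Q_1'$ do work, because now $\max R_1<\max Q_1'$ so Proposition~\ref{clm23}(iii) controls $\delta$, and the residual equality case collapses to $s'=t-s$ with $n=k_1+k_t$, i.e.\ (with $s=2$) to the excluded configuration $t=4$, $k_1=k_2$, $k_3=k_4$, $n=k_1+k_3$. Thus Lemma~\ref{claim4} is not a device for a global monotonicity of $\Delta_2$; it furnishes the matched pair of \emph{non-adjacent} jumps that makes the $\ell(R_1)>0$ case go through.
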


To prove Theorem \ref{AS+}, our idea is to find an $(n, k_1, \dots, k_t)$-cross intersecting system $(\mathcal{L}(K_1, k_1)$, $\dots, \mathcal{L}(K_t, k_t))$ with $K_i \in \mathcal{R}_i$ for each $i\in [t]$ such that $\sum_{i=1}^t|\mathcal{L}(K_i, k_i)|>\sum_{i=1}^t|\mathcal{F}_i|$.
For this purpose, we are going to show `unimodality'  of the forthcoming function $f(R_1)$, where $R_1\in \mathcal{R}_1$. For this purpose, we need to make some preparations first.

\begin{definition}\label{ab}
Let $s, s', t$ be integers with $s\geq s'\geq 1$, $t\geq s+1$, $k_1=\dots=k_{s'}> k_{s'+1}\geq\dots\geq k_t$, and $k_1+k_{s+1}\leq n <k_{s-1}+k_s$.
Suppose that $R_1, R'_1\in \mathcal{R}_1$ with $R_1\prec R'_1$. For each $i\in [s+1, t]$, let $R_i, R'_i$ be the $k_i$-partners of $R_1, R'_1$, respectively; for each $i\in [2, s]$, let $R_i, R'_i$ be the corresponding $k_i$-sets of $R_1, R'_1$, respectively.
For every $i\in [s]$, we denote
\begin{align*}
\alpha_i(R_1, R'_1)&=|\mathcal{L}(R'_i, k_i)|-|\mathcal{L}(R_i, k_i)|,\\
\gamma(R_1, R'_1)&=\sum_{i=1}^s\alpha_i(R_1, R'_1),\\
\delta(R_1, R'_1)&=\sum_{i=s+1}^t\big(|\mathcal{L}(R_i, k_i)|-|\mathcal{L}(R'_i, k_i)|\big).
\end{align*}
\end{definition}

Comparing two Definitions \ref{ab} and \ref{ab-}, we have the following remark.
\begin{remark}\label{remark4.14}
Let $s, s', t$ be integers with $s\geq s'\geq 1$, $t\geq s+1$, $k_1=\dots=k_{s'}> k_{s'+1}\geq\dots\geq k_t$ and $k_1+k_{s+1}\leq n <k_{s-1}+k_s$.
Let $R_1, R_1' \in \mathcal{R}_1$ with $R_1\prec R'_1$.
Replacing $k_1, k_2, \dots, k_t$ by $k_1, k_{s+1}, \dots, k_t$ in Definition \ref{ab-},
  we can see  that $\delta(R_1, R'_1)=\beta(R_1, R'_1)$, and  for each $i\in [s']$,
$
\alpha_i(R_1, R'_1)=\alpha(R_1, R'_1)$.
\end{remark}

\begin{definition}\label{deff}
Let $s, s', t$ be integers with $s\geq s'\geq 1$, $t\geq s+1$, $k_1=\dots=k_{s'}> k_{s'+1}\geq\dots\geq k_t$, and $k_1+k_{s+1}\leq n <k_{s-1}+k_s$.
Let $R_1\in \mathcal{R}_1$. For each $i\in [2, s]$, let $R_i$ be  the corresponding $k_i$-set of $R_1$, and for each $i\in [s+1, t]$, let $R_i$ be the $k_i$-partner of $R_1$. Denote
$$f(R_1)=\sum_{i=1}^t |\mathcal{L}(R_i, k_t)|.$$
\end{definition}

From Definitions \ref{ab} and \ref{add}, for two sets $R_1, R'_1\in \mathcal{R}_1$ with $R_1\prec R'_1$, we have
\begin{equation}\label{7-19-4}
f(R'_1)-f(R_1)=\gamma(R_1, R'_1)-\delta(R_1, R'_1).
\end{equation}

\begin{remark}\label{7-19-1}
Let $s, s', t$ be integers with $s\geq s'\geq 1$, $t\geq s+1$, $k_1=\dots=k_{s'}> k_{s'+1}\geq\dots\geq k_t$, and $k_1+k_{s+1}\leq n <k_{s-1}+k_s$.
Let $R_1\in \mathcal{R}_1$. For each $i\in [2, s]$, let $R_i$ be  the corresponding $k_i$-set of $R_1$, and for each $i\in [s+1, t]$, let $R_i$ be the $k_i$-partner of $R_1$.
Then $(\mathcal{L}(R_1, k_1), \dots, \mathcal{L}(R_t, k_t))$ is an $(n, k_1, \dots, k_t)$-cross intersecting system.
\end{remark}

\begin{proof}
Since $n <k_{s-1}+k_s$, $\mathcal{L}(R_{1}, k_{1}), \dots, \mathcal{L}(R_s, k_s)$ are pairwise cross intersecting.
Let $i\in [s]$,  $j\in [s+1, t]$, and let $R_{i, j}$ be the $k_j$-partner of $R_i$.
By Fact \ref{f2.8}, $\mathcal{L}(R_i, k_i)$ and $\mathcal{L}(R_{i, j}, k_j)$ are cross intersecting.
Note that $R_j$ is the $k_j$-partner of $R_1$, and $R_i\prec R_1$.
By Fact \ref{fact2.7}, $R_j \prec R_{i, j}$.
Thus $\mathcal{L}(R_i, k_i)$ and  $\mathcal{L}(R_j, k_j)$ are cross intersecting.
Since $R_1\in \mathcal{R}_1$ and $R_i$ is  the corresponding $k_i$-set of $R_1$, $\{1, n-k_i+2, \dots, n\}\prec R_i\prec R_1$. Thus, it follows from Remark \ref{remark2.20+}, that $\mathcal{L}(R_{s+1}, k_{s+1}), \dots, \mathcal{L}(R_t, k_t)$ are pairwise cross intersecting. Therefore, $(\mathcal{L}(R_1, k_1), \dots, \mathcal{L}(R_t, k_t))$ is an $(n, k_1, \dots, k_t)$-cross intersecting system.
\end{proof}

Combining  Proposition \ref{7-18-1} and Remark \ref{7-19-1}, and taking $s=2$ in Definition \ref{deff},  we can see that if
$(\mathcal{F}_1, \dots, \mathcal{F}_t)$ is an extremal L-initial $(n, k_1, \dots, k_t)$-cross intersecting with ID $I_1$ of $\mathcal{F}_1$,  then
$$\sum_{i=1}^t|\mathcal{F}_i|=f(I_1)=\max \{f(R_1): R_1\in \mathcal{R}_1\}.$$
As a consequence, to prove Theorem \ref{AS+}, it suffices to prove the following theorem.

\begin{theorem}\label{7-19-2}
Let $s, s', t$ be integers with $s\geq s'\geq 1$, $t\geq s+1$, $k_1=\dots=k_{s'}> k_{s'+1}\geq\dots\geq k_t$, and $k_1+k_{s+1}\leq n <k_{s-1}+k_s$.  Additionally, suppose that if $n=k_1+k_t$, then $s'\ne t-s$.
Let $R_1\in \mathcal{R}_1$. If $\{1, n-k_1+2, \dots, n\}\precneqq   R_1\precneqq \{k_t, n-k_1+2, \dots, n\}$,  then there is $K_1\in \mathcal{R}_1$ such that $f(K_1)>f(R_1)$.
\end{theorem}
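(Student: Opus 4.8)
The plan is to regard $f$ as a function on the chain $\mathcal{R}_1$ ordered by $\prec$ and to show that it is \emph{valley-unimodal}: non-increasing on an initial segment and non-decreasing thereafter, so that its maximum over $\mathcal{R}_1$ is attained only at the two endpoints $\{1,n-k_1+2,\dots,n\}$ and $\{k_t,n-k_1+2,\dots,n\}$. Granting this, every $R_1$ lying strictly between the endpoints is strictly dominated by one of them, and that endpoint supplies the required $K_1$. The starting point is the identity \eqref{7-19-4}, which for a covering pair $R_1<R'_1$ reads $f(R'_1)-f(R_1)=\gamma(R_1,R'_1)-\delta(R_1,R'_1)$, splitting the one-step change into the aggregate \emph{gain} $\gamma$ of the $s$ freely cross intersecting families and the aggregate \emph{loss} $\delta$ of the $t-s$ partner families. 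Thus the whole theorem reduces to controlling the sign of $g(R_1):=\gamma-\delta$ along the chain and showing it switches from non-positive to non-negative at most once.

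First I would pin down $\delta$. By Remark \ref{remark4.14} we have $\delta=\beta$, so Proposition \ref{clm23} gives its exact value $\sum_{j=s+1}^{t}\binom{n-q}{k_j-(q-k_1)}$ with $q=\max R'_1$, together with the dichotomy that it is the constant $t-s$ when $n=k_1+k_j$ for all $j>s$, and otherwise strictly decreases (until it vanishes) as $q$ grows. Claim \ref{0000-} and Lemma \ref{clm27-} let me compare one-step changes occurring at different places of $\mathcal{R}_1$ that share the same $\max$, so that $\delta$ can be treated as a single non-increasing profile in the relevant parameter rather than as an erratic step-by-step quantity.

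The core difficulty is the gain $\gamma=\sum_{i=1}^{s}\alpha_i$. The $s'$ top-size families each contribute $\alpha_i=\alpha=1$ by Remark \ref{remark4.14}, which is clean; but the middle families with $s'<i\le s$ are governed by the \emph{corresponding $k_i$-set}, whose lex-size increases only at the ``parity'' steps (Fact \ref{correspondingk}) and by irregular amounts. Showing that the total $\gamma$ is non-decreasing along the chain, and quantifying its jumps against those of $\delta$, is exactly the content of the deferred Lemma \ref{claim4}, which I would invoke here; the parity and corresponding-$k$-set calculus of Section \ref{sec2} (Fact \ref{fact2.7}, Fact \ref{correspondingk} and Lemma \ref{fact2.5}) is what makes these jumps computable. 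I expect this to be the main obstacle: unlike the top-size terms, the middle terms admit no uniform per-step description, and matching their cumulative growth against the decay of $\delta$ is where the real work lies. With $\gamma$ non-decreasing and $\delta$ non-increasing, $g=\gamma-\delta$ is non-decreasing, hence crosses zero at most once from below; equivalently $f$ is valley-unimodal.

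It remains to upgrade ``maximum at an endpoint'' to the strict domination the statement demands. A valley-unimodal $f$ satisfies $f(R_1)\le\max\{f(\text{first}),f(\text{last})\}$ for interior $R_1$, and equality could persist for an interior $R_1$ only if $f$ were flat at its maximal level across that point, which forces $g\equiv 0$ over a run of steps. Tracing when $\gamma=\delta$ can hold on such a run, the presence of any middle family ($s>s'$) makes $\gamma$ jump at parity steps and hence non-constant, while $\delta$ is either strictly decreasing or the constant $t-s$; a sustained equality therefore requires $s=s'$, $\delta\equiv t-s$ (so $n=k_1+k_t$), and $s'=t-s$. This is precisely the degenerate configuration excluded by the hypothesis ``if $n=k_1+k_t$ then $s'\ne t-s$'' (and it is exactly the exceptional family isolated in Theorem \ref{main1}). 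Hence under our assumptions $f$ is never flat at its maximum across an interior point, the inequality is strict for every $R_1$ with $\{1,n-k_1+2,\dots,n\}\precneqq R_1\precneqq\{k_t,n-k_1+2,\dots,n\}$, and the required $K_1$ exists.
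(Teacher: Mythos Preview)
Your high-level picture --- show that the one-step increment $g:=\gamma-\delta$ is monotone along the lex chain so that $f$ is valley-unimodal --- is attractive, but the monotonicity you need is simply false for covering pairs $R_1<R'_1$. The one-step $\delta$ depends only on $q=\max R'_1$ (Proposition~\ref{clm23}), and $q$ is wildly non-monotone in lex order: immediately after $\{1,n-k_1+2,\dots,n\}$ it drops from $n$ to $k_1+1$, then climbs back to $n$, then drops again, and so on. The one-step $\gamma$ equals $\sum_{i\le s}\binom{\ell(R'_1)}{\,k_i-|R'_1\setminus {R'_1}^{\rm t}|\,}$ (this is Claim~\ref{claim1}), which oscillates just as badly with $\ell(R'_1)$. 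Lemma~\ref{claim4} does \emph{not} assert that $\gamma$ is step-wise non-decreasing; it only compares two specific \emph{blocks} $A_1\!\to\! B_1$ and $B_1\!\to\! C_1$ of a very particular shape (same tail length for $B_1,C_1$, with $A_1$ obtained from $B_1$ by a single tail shortening). So the sentence ``with $\gamma$ non-decreasing and $\delta$ non-increasing, $g$ is non-decreasing'' has no proof, and the valley-unimodality of $f$ is never established.

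The paper does not prove unimodality. It argues by contradiction from the \emph{leftmost} interior maximiser $R_1$, and the idea missing from your plan is a case split on $\ell(R_1)$. When $\ell(R_1)>0$, the paper constructs specific points $P_1$ (before $R_1$) and $Q_1$ (after $R_1$) --- in general \emph{not} the covering neighbours --- so that the triple $(P_1,R_1,Q_1)$ has exactly the form required by Lemma~\ref{claim4}; leftmost-ness forces $f(P_1)<f(R_1)$, hence $\gamma(P_1,R_1)>\delta(P_1,R_1)$, and Lemma~\ref{claim4} transports this block inequality to $\gamma(R_1,Q_1)>\delta(R_1,Q_1)$, yielding $f(Q_1)>f(R_1)$. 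When $\ell(R_1)=0$, the covering neighbours $P'_1<R_1<Q'_1$ are used, but now $R_1$ has no $k_i$-parity for any $i>s'$, so by Fact~\ref{correspondingk} one gets $\gamma(P'_1,R_1)=s'$ exactly; combined with $\max R_1<\max Q'_1$ and Proposition~\ref{clm23} on $\delta$, this forces either $f(Q'_1)>f(R_1)$ or the degenerate equality $s'=t-s$ with $n=k_1+k_t$, which the hypothesis excludes. It is this choice of the right non-covering comparison when $\ell(R_1)>0$, together with the exact value $\gamma=s'$ when $\ell(R_1)=0$, that replaces the unavailable global monotonicity you rely on.
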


To prove the above theorem, we need  the following lemma, whose proof will be given in the next section.
We emphasize that this lemma will be crucial in our follow-up paper \cite{hpgeneral} for handing the most general case where $n\geq k_1+k_t$.

\begin{lemma}\label{claim4}
Let $s, s', t$ be integers with $s\geq s'\geq 1$, $t\geq s+1$, $k_1=\dots=k_{s'}> k_{s'+1}\geq\dots\geq k_t$, and $k_1+k_{s+1}\leq n <k_{s-1}+k_s$.
Let $A_1, B_1, C_1\in \mathcal{R}_1$ with  $A_1\setminus A_1^{\rm t}=A\sqcup \{a, a+1\}$, $B_1=A\sqcup \{a\}\sqcup [n-\ell(B_1)+1, n]$ and $C_1=A\sqcup \{a+1\}\sqcup [n-\ell(B_1)+1, n]$ (where $\max A<a$).
Then $\delta(A_1, B_1)=\delta(B_1, C_1)$ and $\gamma(A_1, B_1)\leq \gamma(B_1, C_1)$.
\end{lemma}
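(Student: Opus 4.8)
The plan is to reduce both quantities to explicit lexicographic rank counts and to observe that the ``second difference'' across the two steps $A_1\to B_1$ and $B_1\to C_1$ vanishes. Throughout I would write $p=|A|$ and $W=[a-1]\setminus A$, so that $\max A<a$ forces $W\subseteq[a-1]$, $|W|=a-1-p$, and a size count gives $\ell(B_1)=\ell(A_1)+1$ while $B_1$ and $C_1$ share the same trailing block. The cores are $P=A\sqcup\{a,a+1\}$, $Q=A\sqcup\{a\}$, $S=A\sqcup\{a+1\}$, and one checks $A_1\prec B_1\prec C_1$.

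For the statement on $\delta$, I would first use Fact \ref{fact2.4+} to replace each $k_i$-partner by the $k_i$-partner of the corresponding core, and Remark \ref{r2.6} to pass to the full partners. A direct computation of the partners of $P,Q,S$ yields the clean forms $H_P=W\cup\{a+1\}$, $H_Q=W\cup\{a\}$, $H_S=W\cup\{a,a+1\}$, so that $\delta(A_1,B_1)=\sum_{i=s+1}^t\bigl(|\mathcal{L}(H_P,k_i)|-|\mathcal{L}(H_Q,k_i)|\bigr)$ and $\delta(B_1,C_1)=\sum_{i=s+1}^t\bigl(|\mathcal{L}(H_Q,k_i)|-|\mathcal{L}(H_S,k_i)|\bigr)$. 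I would then evaluate each difference with the standard lex-rank expansion $|\mathcal{L}(H,k)|=\sum_j\sum_{c=h_{j-1}+1}^{h_j-1}\binom{n-c}{k-j}+\text{(superset term)}$: since $H_Q$ is the immediate predecessor of $H_P$ in $\binom{[n]}{|W|+1}$ and $H_S=H_Q\cup\{a+1\}$, all lower-order contributions cancel and a single Pascal step gives, for \emph{every} $k$, the identity $|\mathcal{L}(H_P,k)|-|\mathcal{L}(H_Q,k)|=|\mathcal{L}(H_Q,k)|-|\mathcal{L}(H_S,k)|=\binom{n-a-1}{\,k-|W|-1\,}$. Summing over $i=s+1,\dots,t$ yields $\delta(A_1,B_1)=\delta(B_1,C_1)$.

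For the statement on $\gamma$, the relevant objects are the corresponding $k_i$-sets (Definition \ref{R_2}) of $A_1,B_1,C_1$ for $i\in[s]$, and I would split on the size of $k_i$ relative to $p$. In case $k_i\ge p+2$, all three sets possess a $k_i$-parity, which preserves the core and only shortens the trailing block, so the three corresponding sets have exactly the same shape as $A_1,B_1,C_1$ with $k_1$ replaced by $k_i$, and the same lex-rank cancellation gives $\alpha_i(A_1,B_1)=\alpha_i(B_1,C_1)=\binom{n-a-1}{k_i-p-1}$. In the borderline case $k_i=p+1$, the sets $B_1,C_1$ have $k_i$-parities $A\sqcup\{a\}$ and $A\sqcup\{a+1\}$ while $A_1$ has none, so its corresponding set is the $k_i$-set immediately preceding $A\sqcup\{a\}$; checking (via the definition of ``$<$'' and Fact \ref{correspondingk}) that every $k_i$-set $\prec A\sqcup\{a\}$ is automatically $\prec A_1$ shows both gaps equal $1$. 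In case $k_i\le p$ none has a parity and, since all three sets share the first $p\ge k_i$ elements $A$, the three corresponding sets coincide and both gaps are $0$. Summing gives $\gamma(A_1,B_1)=\gamma(B_1,C_1)$, which in particular yields the asserted inequality $\gamma(A_1,B_1)\le\gamma(B_1,C_1)$.

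The main obstacle is the $\gamma$ computation rather than the $\delta$ one: unlike for partners, the ``corresponding $k_i$-set'' is not always a parity, and the borderline case $k_i=p+1$ is delicate. When $\max A=a-1$ the predecessor of $A\sqcup\{a\}$ must ``borrow'' and jumps to a set with a small first coordinate, so one must verify carefully that the predecessor of $A_1$ still coincides with the immediate predecessor of $A\sqcup\{a\}$ in order to keep the gap equal to $1$. I would therefore isolate these degenerate configurations at the outset and dispatch them directly, and only then run the uniform binomial cancellation; keeping the trailing-block bookkeeping consistent across all three sets is the step demanding the most care.
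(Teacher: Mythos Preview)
Your computation of $\delta$ is fine, but the argument for $\gamma$ has a genuine gap: you implicitly assume that $C_1\setminus C_1^{\rm t}=A\sqcup\{a+1\}$, and this fails precisely when $a+1=n-\ell(B_1)$. In that boundary configuration the element $a+1$ is adjacent to the tail $[n-\ell(B_1)+1,n]$, so $C_1=A\sqcup[a+1,n]$ and $C_1\setminus C_1^{\rm t}=A$, of size $p$ rather than $p+1$. Your ``same shape'' claim for the $k_i$-parities then breaks: the $k_i$-parity of $C_1$ has core $A$, not $A\sqcup\{a+1\}$, and your formula $\alpha_i(B_1,C_1)=\binom{n-a-1}{k_i-p-1}$ is no longer correct. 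For instance, with $i\in[s'+1,s]$ one checks (or reads off from Claim~\ref{claim1}, since in this situation $A_1<B_1<C_1$ are consecutive) that $\alpha_i(A_1,B_1)=\binom{n-a-1}{k_i-p-1}$ while $\alpha_i(B_1,C_1)=\binom{n-a}{k_i-p}$, which is \emph{strictly} larger whenever $k_i<k_1$. The same phenomenon hits your border case $k_i=p+1$ (where the parity of $C_1$ becomes $A\sqcup\{n\}$, not $A\sqcup\{a+1\}$, giving a gap of $n-a$ rather than $1$) and your case $k_i=p$ (where $C_1$ now \emph{does} possess a parity, contrary to your assertion). So your claimed equality $\gamma(A_1,B_1)=\gamma(B_1,C_1)$ is simply false in this regime; only the inequality holds, and your argument does not establish it.

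The paper's proof avoids this by splitting at the outset on whether $\ell(C_1)>\ell(B_1)$ or $\ell(C_1)=\ell(B_1)$. In the first (your missing) case one has $A_1<B_1<C_1$ consecutive with $\max B_1=\max C_1=n$ and $\ell(B_1)<\ell(C_1)$, so Claim~\ref{0000} gives $\delta(A_1,B_1)=\delta(B_1,C_1)$ and $\gamma(A_1,B_1)\le\gamma(B_1,C_1)$ directly. In the second case the paper inserts the immediate successors $A_1',B_1'$ of $A_1,B_1$, uses Claim~\ref{0000} on the first step and the $c$-sequential Claim~\ref{equal} on the remainder, and adds. Your direct lex-rank approach is perfectly viable for the non-degenerate case $\ell(C_1)=\ell(B_1)$ and even yields the stronger equality there, but you must isolate and treat the case $a+1=n-\ell(B_1)$ separately.
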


We are going to prove Theorem \ref{7-19-2} by applying Proposition \ref{clm23} and Lemma \ref{claim4}.

\begin{proof}[Proof of Theorem \ref{7-19-2}]
Suppose on the contrary that
\begin{equation}\label{7-19-3}
f(R_1)=\max \{f(F_1): F_1\in \mathcal{R}_1\}.
\end{equation}
We may assume that $R_1$ is the first member of $\mathcal{R}_1$ satisfying (\ref{7-19-3}) and $\{1, n-k_1+2, \dots, n\}\precneqq   R_1\precneqq \{k_t, n-k_1+2, \dots, n\}$. We have the following two cases.

{\bf Case 1.} $\ell(R_1)>0$. In this case, $R_1^t=[n-\ell(R_1)+1, n]$.
Denote
\begin{align*}
&P_1=R_1\setminus \{n-\ell(R_1)+1\}\cup \{\max (R_1\setminus R_1^t) +1\}\\
&Q_1=R_1\setminus \{\max (R_1\setminus R_1^t)\}\cup \{\max (R_1\setminus R_1^t) +1\}.
\end{align*}
Since $\{1, n-k_1+2, \dots, n\}\precneqq R_1\precneqq \{k_t, n-k_1+2, \dots, n\}$, $P_1, Q_1$ are contained in $\mathcal{R}_1$, and $\{1, n-k_1+2, \dots, n\}\precneqq P_1 \precneqq \{k_t, n-k_1+2, \dots, n\}$. By the choice of $R_1$, we have
$
f(P_1)<f(R_1).
$
In view of (\ref{7-19-4}), $f(R_1)=f(P_1)+\gamma(P_1, R_1)-\delta(P_1, R_1)$. Thus,
\begin{equation}\label{7-19-5}
\gamma(P_1, R_1)>\delta(P_1, R_1)
\end{equation}
Note that $P_1, R_1, Q_1$ satisfy the conditions of Lemma \ref{claim4},  corresponding to $A_1, B_1, C_1$, respectively.
B Lemma \ref{claim4}, we have
$\delta(P_1, R_1)=\delta(R_1, Q_1)$ and $\gamma(P_1, R_1)\leq \gamma(R_1, Q_1)$. Combining these with (\ref{7-19-5}), and in view of (\ref{7-19-4}), we have
\[
f(Q_1)=f(R_1)+\gamma(R_1, Q_1)-\delta(R_1, Q_1)\geq f(R_1)+\gamma(P_1, R_1)-\delta(P_1, R_1)>f(R_1),
\]
a contradiction to (\ref{7-19-3}).

{\bf Case 2.} $\ell(R_1)=0$. Let $P'_1, Q'_1\in \mathcal{R}_1$ be such that $P'_1<R_1<Q'_1$. (Recall that `$<$' is defined in Definition \ref{<}.) Since $\{1, n-k_1+2, \dots, n\}\precneqq R_1\precneqq \{k_t, n-k_1+2, \dots, n\}$, $P'_1, Q'_1$ are contained in $\mathcal{R}_1$. Thus $f(P'_1)\leq f(R_1)$. In view of (\ref{7-19-4}), $f(R_1)=f(P'_1)+\gamma(P'_1, R_1)-\delta(P'_1, R_1)$. Thus,
\begin{equation}\label{7-19-7}
\gamma(P'_1, R_1)\geq \delta(P'_1, R_1)
\end{equation}
For each $i\in [2, s]$, let $P'_i, R_i, Q'_i$ be the corresponding $k_i$-sets of $P'_1, R_1, Q'_1$, respectively;
for each $i\in [s+1, t]$, let $P'_i, R_i, Q'_i$ be the $k_i$-partners of $P'_1, R_1, Q'_1$, respectively.

Since  $\ell(R_1)=0$ and $k_i<k_1$ for $i\in [s'+1, s]$,  by Fact \ref{correspondingk}, we have the following claim. 
\begin{claim}\label{7-19-8}
For each $i\in [s'+1, s]$, we have $P'_i=R_i\prec Q'_i$.
\end{claim}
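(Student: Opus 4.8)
The plan is to unpack Claim \ref{7-19-8} directly from the definition of the corresponding $k_i$-set together with Fact \ref{correspondingk}. First I would recall the hypotheses of Case 2: we have $\ell(R_1)=0$, which means $\max R_1<n$, and we have chosen $P'_1<R_1<Q'_1$ in $\mathcal{R}_1$, where the relation $<$ from Definition \ref{<} means $P'_1$ is the immediate lex-predecessor of $R_1$ among $k_1$-sets, and $R_1$ the immediate predecessor of $Q'_1$. Since we are in the range $\{1,n-k_1+2,\dots,n\}\precneqq R_1\precneqq\{k_t,n-k_1+2,\dots,n\}$, these three sets all lie in $\mathcal{R}_1$.

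The heart of the argument is the equality $P'_i=R_i$ for $i\in[s'+1,s]$, where $k_i<k_1$. The key observation is that because $\ell(R_1)=0$, the set $R_1$ has no suffix of the form $[n-\ell+1,n]$, and hence (by Definition \ref{R_2}) $R_1$ does \emph{not} admit a $k_i$-parity when $k_i<k_1$; indeed a $k_i$-parity would require $R_1$ to have a nontrivial tail $R_1^{\rm t}$ to shorten. Therefore, by the second clause of Definition \ref{R_2}, the corresponding $k_i$-set $R_i$ of $R_1$ is precisely the $k_i$-set with $R_i<R_1$. Now I would apply Fact \ref{correspondingk} to the pair $P'_1<R_1$ (both in $\mathcal{R}_1$ with $k_i<k_1$): it guarantees that both have corresponding $k_i$-sets $P'_i,R_i$ with $P'_i\prec R_i$, and crucially that $P'_i=R_i$ holds \emph{if and only if} $R_1$ does not have the $k_i$-parity. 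Since we have just established that $\ell(R_1)=0$ forces $R_1$ to lack a $k_i$-parity, we conclude $P'_i=R_i$.

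Finally, to obtain the strict part $R_i\prec Q'_i$, I would apply Fact \ref{correspondingk} once more, this time to the pair $R_1<Q'_1$. This yields corresponding $k_i$-sets $R_i\prec Q'_i$, with equality iff $Q'_1$ fails to have a $k_i$-parity. Here, however, $Q'_1$ is the immediate lex-successor of $R_1$ (via the relation $<$), and since $R_1$ itself has $\ell(R_1)=0$, the set $Q'_1$ acquires a nonempty tail, so $Q'_1$ \emph{does} possess a $k_i$-parity; hence the inequality $R_i\prec Q'_i$ is strict. Combining the two steps gives $P'_i=R_i\prec Q'_i$ for each $i\in[s'+1,s]$, as claimed.

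I expect the main subtlety to lie in verifying carefully the parity bookkeeping — namely confirming that $\ell(R_1)=0$ genuinely precludes a $k_i$-parity for $R_1$ while the successor $Q'_1$ (which gains a tail when we increment past the top of $R_1$) does admit one. This hinges on exactly how the relation $<$ of Definition \ref{<} produces the tail $[n-\ell+1,n]$ in $Q'_1$, and on matching this against the $\ell(H_2)-\ell(H_1)=h_2-h_1$ condition in the definition of parity. Everything else is a direct invocation of Fact \ref{correspondingk}, so the only real work is making the parity/no-parity dichotomy precise for $R_1$ versus $Q'_1$.
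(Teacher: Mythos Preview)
Your approach matches the paper's: invoke Fact~\ref{correspondingk} together with the observation that $\ell(R_1)=0$ forces $R_1$ to lack a $k_i$-parity for $k_i<k_1$. The argument for $P'_i=R_i$ is correct.

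However, your treatment of the inequality $R_i\prec Q'_i$ contains an error. You assert that because $\ell(R_1)=0$, the successor $Q'_1$ ``acquires a nonempty tail'' and therefore possesses a $k_i$-parity. This is false in general: if $\max R_1<n-1$, then $Q'_1=(R_1\setminus\{\max R_1\})\cup\{\max R_1+1\}$ still satisfies $\max Q'_1<n$, hence $\ell(Q'_1)=0$. Even when $\max R_1=n-1$ so that $\ell(Q'_1)=1$, Remark~\ref{NEW12} shows that $Q'_1$ has a $k_i$-parity only when $k_1-k_i\le 1$, which need not hold for all $i\in[s'+1,s]$.

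Fortunately this entire step is unnecessary. In the paper's convention the relation $\prec$ is reflexive (it is explicitly noted that $A\prec A$), so the assertion $R_i\prec Q'_i$ is exactly what Fact~\ref{correspondingk} delivers when applied to $R_1<Q'_1$, with no parity analysis of $Q'_1$ required. The claim as stated does not demand strictness, and its subsequent use to derive $\gamma(P'_1,R_1)=s'\le\gamma(R_1,Q'_1)$ only needs $\alpha_i(R_1,Q'_1)\ge 0$, which is the reflexive statement. Simply drop the attempt to prove strictness and the proof is complete.
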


Note that $P'_1<R_1<Q'_1$.
Thus, Claim \ref{7-19-8} gives
\begin{equation}\label{7-20-2}
\gamma(P'_1, R_1)=s'\leq \gamma(R_1, Q'_1).
\end{equation}
Since $\max R_1<\max Q'_1$ and $P'_1<R_1<Q'_1$, combining Proposition \ref{clm23} and Remark \ref{remark4.14}, we obtain one of the following three cases:
$\delta(P'_1, R_1)>\delta(R_1, Q'_1)$;  $\delta(P'_1, R_1)=\delta(R_1, Q'_1)=0$;
 $\delta(P'_1, R_1)=\delta(R_1, Q'_1)=t-s$, where the last case happens if and only if $n=k_1+k_t$.
So $\delta(P'_1, R_1)\geq \delta(R_1, Q'_1)$.
If $\gamma(P'_1, R_1)> \delta(P'_1, R_1)$ in (\ref{7-19-7}), then by (\ref{7-20-2}), we have
$$f(Q'_1)=f(R_1)+\gamma(R_1, Q'_1)-\delta(R_1, Q'_1)\geq f(R_1)+\gamma(P'_1, R_1)-\delta(P'_1, R_1)>f(R_1),$$
a contradiction to (\ref{7-19-3}).
Thus $\gamma(P'_1, R_1)= \delta(P'_1, R_1)$ in (\ref{7-19-7}).
If $\delta(P'_1, R_1)> \delta(R_1, Q'_1)$ or $\gamma(P'_1, R_1)< \gamma(R_1, Q'_1)$ in (\ref{7-20-2}),
then
$$f(Q'_1)=f(R_1)+\gamma(R_1, Q'_1)-\delta(R_1, Q'_1)> f(R_1)+\gamma(P'_1, R_1)-\delta(P'_1, R_1)\geq f(R_1),$$
a contradiction to (\ref{7-19-3}). Thus
$$t-s=\delta(R_1, Q'_1)=\delta(P'_1, R_1)=\gamma(P'_1, R_1)= \gamma(R_1, Q'_1)=s',$$
and $n=k_1+k_t$.
This makes a contradiction to the condition of Theorem \ref{7-19-2}.

The proof of Theorem \ref{7-19-2} is complete.
\end{proof}

Thus, we complete the proof of Theorem \ref{AS+}.
We are ready to prove Theorem \ref{main1}.

\begin{proof}[Proof of Theorem \ref{main1}]
Let $(\mathcal{F}_1, \dots, \mathcal{F}_t)$ be an extremal $(n, k_1, \dots, k_t)$-cross intersecting system with $k_1\geq k_2\geq\dots\geq k_t$ and $k_1+k_3\leq n <k_1+k_2$.

We  first consider the case: $k_t=1$. Suppose that $|\mathcal{F}_t|=s$. Then $\mathcal{F}_t=\{\{1\}, \dots, \{s\}\}$. Since $(\mathcal{F}_1, \dots, \mathcal{F}_t)$ is a cross intersecting system, then  for any $i\in [t-1]$ and any $F\in \mathcal{F}_i$, we have $[s]\subseteq F$. Thus,
$$\sum_{i=1}^t|\mathcal{F}_i|=\sum_{i=1}^{t-1}{n-s\choose k_i-s}+s\leq\lambda_1,$$
as required.
Note that the above equality holds if and only if $s=1$, that is $\mathcal{F}_t=\{x\}$ for some $x\in [n]$. Therefore, $(\mathcal{F}_1, \dots, \mathcal{F}_t)$ is isomorphic to $(\mathcal{G}_1, \dots, \mathcal{G}_t)$ which is defined in Example \ref{con1}, we are done.

Next, we consider the case: $t=3$. By Theorem \ref{FT1992}, $|\mathcal{F}_1|+|\mathcal{F}_3|\leq {n\choose k_1}-{n-k_3\choose k_1}+1$. Clearly, $|\mathcal{F}_2|\leq {n\choose k_2}-{n-k_3\choose k_2}$ since every $k_2$-set in $\mathcal{F}_2$ intersects with every $k_3$-set in $\mathcal{F}_3$. Thus, $\sum_{i=1}^3|\mathcal{F}_i|\leq \lambda_2$, as required. The upper bound can be achieved if and only if $|\mathcal{F}_2|= {n\choose k_2}-{n-k_3\choose k_2}$. This implies that there is some $k_3$-set $A$, such that $\mathcal{F}_3=\{A\}$ and $\mathcal{F}_2=\{F\in {[n]\choose k_2}: F\cap A\ne \emptyset\}$. Therefore, $\mathcal{F}_1=\{F\in {[n]\choose k_1}: F\cap A\ne \emptyset\}$. So $(\mathcal{F}_1, \mathcal{F}_3, \mathcal{F}_3)$ is isomorphic to $(\mathcal{H}_1, \mathcal{H}_2, \mathcal{H}_3)$ which is defined in Example \ref{con2}, we are done.

Then, we  consider the case: $t=4$, $k_1=k_2$, $k_3=k_4$ and $n=k_1+k_3$. In this case,
$|\mathcal{F}_1|+|\mathcal{F}_4|\leq {n\choose k_1}$ and $|\mathcal{F}_2|+|\mathcal{F}_3|\leq {n\choose k_1}$.
Thus $\sum_{i=1}^4|\mathcal{F}_i|\leq 2{n\choose k_1}=\lambda_1=\lambda_2$, as required.
Since $(\mathcal{F}_1, \dots, \mathcal{F}_4)$
is extremal, $\mathcal{F}_1=\mathcal{F}_2$ and $\mathcal{F}_3=\mathcal{F}_4$. Note that $n>k_3+k_4$ and $n=k_1+k_3$. Then $\mathcal{F}_3$ is an intersecting family, and $\mathcal{F}_1= {[n]\choose k_1}\setminus \overline{\mathcal{F}_3}$, we are done.

Now, it suffices to  prove Theorem \ref{main1} under the following conditions.
\begin{equation}\label{7-17-1}
\text{$k_t\geq 2$, $t\geq 4$ and if $t=4$ with $k_1=k_2$ and $k_3=k_4$, then $n>k_1+k_3$.}
\end{equation}
The the quantitative part  immediately follows from  Theorem \ref{kk}, Proposition \ref{prop2.4} and Theorem \ref{AS+}.
We are going to show that  $(\mathcal{F}_1, \dots, \mathcal{F}_t)$  is isomorphic to Examples \ref{con1} or \ref{con2} under conditions in (\ref{7-17-1}).

By Lemma \ref{>}, Proposition \ref{prop2.4}, Theorem \ref{AS+}, and Theorem \ref{kk}, we conclude that:  either for each $i\in [t]$, we have $|\mathcal{F}_i|={n-1\choose k_i-1}$ or $|\mathcal{F}_1|={n\choose k_1}-{n-k_t\choose k_1}$, $|\mathcal{F}_2|={n\choose k_2}-{n-k_t\choose k_2}$ and $|\mathcal{F}_i|={n-k_t\choose k_i-k_t}$ holds for each $i\in [3, t]$.
If the first case happens, then $(\mathcal{F}_1, \mathcal{F}_3,\dots, \mathcal{F}_t)$ is an $(n, k_1, k_3, \dots, k_t)$-cross intersecting system with
$\sum_{i=1, i\ne 2}^t|\mathcal{F}_i|=\sum_{i=1, i\ne 2}^t{n-1\choose k_i-1}$.
Note that $k_1>k_3$ and $t\geq 4$. By Theorem \ref{HP} (i),  $(\mathcal{F}_1, \dots, \mathcal{F}_t)$ is isomorphic to $(\mathcal{G}_1, \dots, \mathcal{G}_t)$ which is defined in Example \ref{con1}.
If the second case happens, then $(\mathcal{F}_1, \mathcal{F}_3,\dots, \mathcal{F}_t)$ is an $(n, k_1, k_3, \dots,  k_t)$-cross intersecting system with
$\sum_{i=1, i\ne 2}^t|\mathcal{F}_i|={n\choose k_1}-{n-k_t\choose k_1}+\sum_{i=3}^t{n-k_t\choose k_i-k_t}$
By Theorem \ref{HP}  (i),  $(\mathcal{F}_1, \dots, \mathcal{F}_t)$ is isomorphic to $(\mathcal{H}_1, \dots, \mathcal{H}_t)$ which is defined in Example \ref{con2}.
 This completes the proof of Theorem \ref{main1}.
 \end{proof}

\section{Proof of Lemma \ref{claim4}}\label{sec5}
From Definition \ref{ab}, we have the following remark.
\begin{remark}\label{add}
Let $s, s', t$ be integers with $s\geq s'\geq 1$, $t\geq s+1$, $k_1=\dots=k_{s'}> k_{s'+1}\geq\dots\geq k_t$ and $k_1+k_{s+1}\leq n <k_{s-1}+k_s$.
Suppose that $A_1, B_1, C_1\in \mathcal{R}_1$ with $A_1\prec B_1\prec C_1$. Then
for any $i\in [s]$,
$
\alpha_i(A_1, C_1)=\alpha_i(A_1, B_1)+\alpha_i(B_1, C_1)$, and
$\gamma(A_1, C_1)=\gamma(A_1, B_1)+\gamma(B_1, C_1)$,
 $\delta(A_1, C_1)=\delta(A_1, B_1)+\delta(B_1, C_1)$.
\end{remark}

By the definition of parity,  we have the following remark.
\begin{remark}\label{NEW12}
Let $s, s', t$ be positive integers with $s\geq s'\geq 1$, $t\geq s+1$, $k_1=\dots=k_{s'}> k_{s'+1}\geq\dots\geq k_t$ and $k_1+k_{s+1}\leq n <k_{s-1}+k_s$. Then
for any $R\in \mathcal{R}_1$ and $i\in [2, s]$, $R$ has the $k_i$-parity if and only if $|R\setminus R^{\rm t}|\leq k_i$.
\end{remark}

\begin{claim}\label{claim1}
Let $s, s', t$ be integers with $s\geq s'\geq 1$, $t\geq s+1$, $k_1=\dots=k_{s'}> k_{s'+1}\geq\dots\geq k_t$ and $k_1+k_{s+1}\leq n <k_{s-1}+k_s$.
Let  $R_1, R'_1\in \mathcal{R}_1$ with $R_1<R'_1$ and $R=R'_1\setminus {R'_1}^{\rm t}$.
Then for each $i\in [s]$, we have
\[
\alpha_i(R_1, R'_1)={\ell(R'_1)\choose k_i-|R|}.
\]
In particular, $\alpha_i(R_1, R'_1)=0$ if and only if $\ell(R'_1)< k_1-k_i$. Furthermore, if $\ell(R'_1)=0$, then $\gamma(R_1, R'_1)=s'$.
\end{claim}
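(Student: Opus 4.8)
The plan is to compute each $\alpha_i(R_1,R_1')=|\mathcal{L}(R_i',k_i)|-|\mathcal{L}(R_i,k_i)|$ directly, writing $\ell=\ell(R_1')$ so that $|R|=k_1-\ell$ and $k_i\le k_1$ throughout. Since $R_1<R_1'$ are lex-consecutive $k_1$-sets, I first record the shape of $R_1'$ and of its predecessor: from $R_1'=R\sqcup[n-\ell+1,n]$ with $\max R<n-\ell$ and $\ell\ge 1$, the immediate predecessor is obtained by lowering the least tail element and re-filling the top, namely $R_1=R\sqcup\{n-\ell\}\sqcup[n-\ell+2,n]$, so $R_1\setminus R_1^{\rm t}=R\sqcup\{n-\ell\}$ has size $|R|+1$; for $\ell=0$ this explicit form is not needed. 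By Remark \ref{NEW12}, $R_1'$ has a $k_i$-parity iff $|R|\le k_i$, i.e.\ iff $\ell\ge k_1-k_i$, and $R_1$ has a $k_i$-parity iff $|R|+1\le k_i$. This splits the argument according to how $\ell$ compares with $k_1-k_i$.

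When $R_1'$ has no $k_i$-parity, i.e.\ $\ell<k_1-k_i$ (equivalently $|R|>k_i$), I apply Fact \ref{correspondingk} with $(\ell_{\text{there}},k_{\text{there}})=(k_1,k_i)$ and $(P,R_{\text{there}})=(R_1,R_1')$: since $R_1'$ has no $k_i$-parity it gives $R_i=R_i'$, hence $\alpha_i=0$, matching $\binom{\ell}{k_i-|R|}=0$ (the lower index $k_i-k_1+\ell$ being negative). The boundary $R_1=\min\mathcal{R}_1$, where the strict hypothesis of Fact \ref{correspondingk} fails, is checked by hand: there $R_1'=\{2,\dots,k_1+1\}$ and a one-line lex computation gives $\mathcal{L}(R_i',k_i)=\{F:1\in F\}=\mathcal{L}(R_i,k_i)$, so again $\alpha_i=0$.

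When $R_1'$ has a $k_i$-parity, i.e.\ $\ell\ge k_1-k_i$, I identify both corresponding sets. The $k_i$-parity of $R_1'$ is $R_i'=R\sqcup[n-(k_i-|R|)+1,\,n]$ (and $R_i'=R$ when $|R|=k_i$); as the appended block consists of the top $k_i-|R|$ elements, the same-uniformity tail-invariance $\mathcal{L}(X,k)=\mathcal{L}(X\setminus X^{\rm t},k)$ (valid since $|R_i'|=k_i$) yields $\mathcal{L}(R_i',k_i)=\mathcal{L}(R,k_i)$. If moreover $|R|<k_i$, then $R_1$ also has a $k_i$-parity $R_i$ with $R_i\setminus R_i^{\rm t}=R\sqcup\{n-\ell\}$, whence $\mathcal{L}(R_i,k_i)=\mathcal{L}(R\sqcup\{n-\ell\},k_i)$. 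Thus $\alpha_i$ is the number of $k_i$-sets $F$ with $R\sqcup\{n-\ell\}\precneqq F\preceq R$, and the crux is to show this interval equals $\mathcal{C}:=\{R\sqcup S: S\subseteq[n-\ell+1,n],\ |S|=k_i-|R|\}$, of size $\binom{\ell}{k_i-|R|}$. The inclusion $\mathcal{C}\subseteq$(interval) is direct: $F=R\sqcup S\supseteq R$ gives $F\preceq R$, while $n-\ell\in(R\sqcup\{n-\ell\})\setminus F$ is the least element of that difference and every element of $F\setminus R$ exceeds $n-\ell$, so $R\sqcup\{n-\ell\}\prec F$. For the reverse inclusion I argue by first differences: if $R\not\subseteq F$, the least $j\in R\setminus F$ forces, via $F\preceq R$, an element of $F$ below $j$, contradicting $R\sqcup\{n-\ell\}\prec F$; and once $R\subseteq F$, an element $e\in F\setminus R$ with $e\le n-\ell$ again contradicts $R\sqcup\{n-\ell\}\prec F$. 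This gives $\alpha_i=\binom{\ell}{k_i-|R|}$.

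The remaining boundary $|R|=k_i$ (i.e.\ $\ell=k_1-k_i\ge1$) must be treated separately, because $R_1$ then has no $k_i$-parity and the cross-uniformity identity $\mathcal{L}(R_1,k_i)=\mathcal{L}(R_1\setminus R_1^{\rm t},k_i)$ genuinely fails. Here $R_i'=R$ and $R_i$ is the largest $k_i$-set $\prec R_1$, so $\alpha_i$ counts $k_i$-sets with $R_1\precneqq F\preceq R$; since $R$ is the set of the $k_i$ smallest elements of $R_1$ and all of $R_1\setminus R$ lies above $n-\ell>\max R$, an analogous first-difference analysis shows the only such $F$ is $R$, giving $\alpha_i=1=\binom{\ell}{0}$. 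For $i\in[s']$ (where $k_i=k_1$) the value $\alpha_i=1=\binom{\ell}{\ell}$ is immediate from $R_1<R_1'$ and Definition \ref{<}. Finally, the ``in particular'' clause records the sign of $k_i-|R|=k_i-k_1+\ell$ (using $0\le k_i-|R|\le\ell$ in the parity case), and the ``furthermore'' clause follows by summing: for $\ell=0$ one has $\alpha_i=\binom{0}{k_i-k_1}$, equal to $1$ for $i\le s'$ and $0$ for $i>s'$, so $\gamma(R_1,R_1')=\sum_{i=1}^s\alpha_i=s'$. I expect the genuine difficulty to be precisely the exact lex count of the gap between $R_i$ and $R_i'$, together with the $|R|=k_i$ boundary, where the naive tail-invariance one would like to invoke is false and a direct first-difference argument is unavoidable.
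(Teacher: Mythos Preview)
Your proof is correct and follows essentially the same route as the paper's: both split according to whether $R_1'$ (and then $R_1$) has a $k_i$-parity via Remark~\ref{NEW12}, use Fact~\ref{correspondingk} for the no-parity case, and in the parity case compute the interval of $k_i$-sets between the two corresponding sets explicitly to obtain $\binom{\ell}{k_i-|R|}$. Your write-up is slightly more careful in two respects: you explicitly handle the boundary $R_1=\{1,n-k_1+2,\dots,n\}$ where the strict hypothesis of Fact~\ref{correspondingk} is not met (the paper applies the fact without comment there), and you spell out the first-difference argument for the lex count and for the $|R|=k_i$ sub-case, where the paper simply asserts $R_i<R_i'$.
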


\begin{proof}
Let  $i\in [s]$.
Clearly, if $i\in [s']$, then $\alpha_i(R_1, R'_1)=1={\ell(R'_1)\choose \ell(R'_1)}={\ell(R'_1)\choose k_i-|R|}$.
We next consider  $i\in[s'+1, s]$. In this case, $k_i<k_1$.
Let $R_i$ and $R'_i$ be the corresponding $k_i$-sets of $R_1$ and $R'_1$, respectively.  Then $R_i\prec R'_i$, moreover $\alpha_i(R_1, R'_1) \geq 0$ and  $\alpha_i(R_1, R'_1)=0$ if and only if $R_i=R'_i$. By Fact \ref{correspondingk}, $R_i=R'_i$ if and only if $R'_1$ does not have $k_i$-parity, i.e., $\ell(R'_1)< k_1-k_i$ in view of Remark \ref{NEW12}.
In particular, if $\ell(R'_1)=0$, then $\alpha_i(R_1, R'_1)=0={\ell(R'_1)\choose k_i-|R|}$. Therefore, $\gamma(R_1, R'_1)=s'$,
as required. We complete the proof of the case that $R'_1$ does not have $k_i$-parity.

Next, we assume that $R'_i$ is the $k_i$-parirty of $R'_1$. So $\ell(R'_1)\geq k_1-k_i$. In this case,
since $R_1<R'_1$, $\ell(R_1)=\ell(R'_1)-1$. We first assume that $R_1$ does not have $k_i$-parity. Then $\ell(R_1)\leq k_1-k_i-1$.
Since $\ell(R_1)=\ell(R'_1)-1$,
$\ell(R'_1)= k_1-k_i$. Note that $\ell(R'_1)=k_1-|R|$. So $k_i=|R|$ and $R'_i=R$. Note that $R_1<R'_1$,  $\ell(R_1)=\ell(R'_1)-1$ and $R_1$ does not have $k_i$-parity. By the definition of corresponding $k_i$-set, we obtain $R_i<R'_i$. So
$\alpha_i(R_1, R'_1)=1={\ell(R'_1)\choose k_i-|R|}$, as required.
At last, we  assume that $R_i$ is the $k_i$-parirty of $R_1$. So $\ell(R_1)\geq 1$.
Let $k_1-k_i=k$.
In this case we have
\begin{align*}
R'_1&=R\cup [n-\ell(R'_1)+1, n];\\
R_1&=R\cup \{n-\ell(R'_1)\} \cup [n-\ell(R'_1)+2, n];\\
R_i&=R\cup \{n-\ell(R'_1)\}\cup [n-\ell(R'_1)+2+k, n];\\
R'_i&=R\cup [n-\ell(R'_1)+1+k, n]=R\cup [n-k_i+|R|+1, n].
\end{align*}
Let $A$ be the $k_i$-set such that $R_i<A$.
Then
$$A=R\cup \{n-\ell(R'_1)+1, \dots, n-\ell(R'_1)+k_i-|R|\}.$$
Denote $\mathcal{F}=\{F\in {[n]\choose k_i}: A\prec F\prec R'_i\}.$
Then
$
\alpha_i(R_1, R'_1)=|\mathcal{F}|={n-(n-\ell(R'_1))\choose k_i-|R|}={\ell(R'_1)\choose k_i-|R|},
$
as required.
\end{proof}

\begin{claim}\label{0000}
Let $s, s', t$ be integers with $s\geq s'\geq 1$, $t\geq s+1$, $k_1=\dots=k_{s'}> k_{s'+1}\geq\dots\geq k_t$ and $k_1+k_{s+1}\leq n <k_{s-1}+k_s$.
Let  $F_1, F'_1, G_1, G'_1\in \mathcal{R}_1$ be such that $F_1<F'_1, G_1<G'_1$ and  $\max F'_1=\max G'_1$.
Then

(i) $\delta(F_1, F'_1)=\delta(G_1, G'_1)$;

(ii) if $\ell(F'_1)\leq \ell(G'_1)$, then $\gamma(F_1, F'_1)\leq \gamma(G_1, G'_1)$, and if $\ell(F'_1)= \ell(G'_1)$, then $\gamma(F_1, F'_1)=\gamma(G_1, G'_1)$.
\end{claim}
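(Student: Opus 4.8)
The plan is to dispatch the two assertions separately, each by reducing to a statement already in hand: part~(i) to the non-mixed identity Claim~\ref{0000-} via the reindexing of Remark~\ref{remark4.14}, and part~(ii) to the closed form for $\alpha_i$ from Claim~\ref{claim1} after a binomial reflection that isolates the dependence on $\ell$.

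For~(i), I would first invoke Remark~\ref{remark4.14}: replacing the list of sizes $k_1, k_2, \dots, k_t$ by $k_1, k_{s+1}, \dots, k_t$ turns $\delta$ into the non-mixed quantity $\beta$, and the hypothesis $k_1 + k_{s+1} \leq n$ supplies exactly the inequality $n \geq k_1 + k_{s+1}$ required to run the non-mixed machinery (with $k_1 \geq k_{s+1} \geq \dots \geq k_t$). Since $F_1 < F_1'$ and $G_1 < G_1'$ are lex-consecutive pairs in $\mathcal{R}_1 = \mathbb{R}_1$ with $\max F_1' = \max G_1'$, Claim~\ref{0000-} gives $\beta(F_1, F_1') = \beta(G_1, G_1')$, whence $\delta(F_1, F_1') = \delta(G_1, G_1')$. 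Equivalently, Proposition~\ref{clm23} evaluates each side as $\sum_{i=s+1}^t \binom{n - q}{k_i - (q - k_1)}$ with $q = \max F_1' = \max G_1'$, which makes the common value manifest.

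For~(ii), I would start from Claim~\ref{claim1}, which for a consecutive pair $R_1 < R_1'$ with $R = R_1' \setminus (R_1')^{\mathrm{t}}$ gives $\alpha_i(R_1, R_1') = \binom{\ell(R_1')}{k_i - |R|}$ for every $i \in [s]$. Because $R_1' = R \sqcup (R_1')^{\mathrm{t}}$ and $|(R_1')^{\mathrm{t}}| = \ell(R_1')$, one has $|R| = k_1 - \ell(R_1')$, so $k_i - |R| = \ell(R_1') - (k_1 - k_i)$. Applying the reflection $\binom{\ell}{\ell - m} = \binom{\ell}{m}$ (both sides being $0$ when $m > \ell$) rewrites this as $\alpha_i(R_1, R_1') = \binom{\ell(R_1')}{k_1 - k_i}$, so that
\[
\gamma(R_1, R_1') = \sum_{i=1}^s \binom{\ell(R_1')}{k_1 - k_i}
\]
depends on $R_1'$ only through $\ell(R_1')$. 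Since $k_1 - k_i \geq 0$ and $\ell \mapsto \binom{\ell}{k_1 - k_i}$ is non-decreasing on the non-negative integers, $\ell(F_1') \leq \ell(G_1')$ forces $\gamma(F_1, F_1') \leq \gamma(G_1, G_1')$ termwise, with equality throughout when $\ell(F_1') = \ell(G_1')$.

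Both halves are comparatively routine given the earlier lemmas, so I do not anticipate a genuine obstacle. The points deserving care are, in~(i), checking that the reindexing really lands inside the scope of Claim~\ref{0000-} (so that $\max F_1' = \max G_1'$ is precisely the invariant pinning down $\beta$, with $\mathcal{R}_1$ unchanged since $k_t$ remains the smallest size), and, in~(ii), the bookkeeping identity $|R| = k_1 - \ell(R_1')$ together with the reflection step, which is what converts an $\ell$-dependent binomial into one with the fixed lower index $k_1 - k_i$ and thereby renders the monotonicity transparent.
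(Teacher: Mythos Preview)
Your proof is correct and follows the same approach as the paper: part~(i) via Remark~\ref{remark4.14} and Claim~\ref{0000-}, part~(ii) via the binomial formula of Claim~\ref{claim1}. Your reflection $\binom{\ell(R'_1)}{k_i-|R|}=\binom{\ell(R'_1)}{k_1-k_i}$ is a mild streamlining of the paper's version, which leaves the comparison in the unreflected form $\binom{\ell(F'_1)}{k_i-|F'_1\setminus {F'_1}^{\rm t}|}\le\binom{\ell(G'_1)}{k_i-|G'_1\setminus {G'_1}^{\rm t}|}$ and handles the $i\in[s']$ summands separately via Claim~\ref{0000-}.
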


\begin{proof}
 Since $F_1<F'_1, G_1<G'_1$ and $\max F'_1=\max G'_1$, it follows from Claim \ref{0000-} and Remark \ref{remark4.14} that
$$\delta(F_1, F'_1)=\delta(G_1, G'_1),$$ and for each $i\in [s']$,
  \begin{equation}\label{NEW15}
\alpha_i(F_1, F'_1)=\alpha_i(G_1, G'_1).
\end{equation}
Note that
\begin{equation}\label{NEW22}
|F'_1\setminus {F'_1}^{\rm t}|+\ell(F'_1)=|G'_1\setminus {G'_1}^{\rm t}|+\ell(G'_1)=k_1.
\end{equation}
If $\ell(F'_1)\leq \ell(G'_1)$, then for each $i\in [s'+1, s]$, it follows from Claim \ref{claim1} and (\ref{NEW22}) that
\begin{equation}\label{NEW21}
\alpha_i(F_1, F'_1)={\ell(F'_1)\choose k_i-|F'_1\setminus {F'_1}^{\rm t}|}\leq{\ell(G'_1)\choose k_i-|G'_1\setminus {G'_1}^{\rm t}|}=\alpha_i(G_1, G'_1).
\end{equation}
Combing  (\ref{NEW15}) and (\ref{NEW21}), we obtian
\begin{equation}\label{NEW23}
\gamma(F_1, F'_1)=\sum_{i=1}^s\alpha_i(F_1, F'_1)\leq \sum_{i=1}^s\alpha_i(G_1, G'_1)=\gamma(G_1, G'_1),
\end{equation}
as required.
Moreover, if $\ell(F'_1)= \ell(G'_1)$, then 
equality holds in (\ref{NEW21}), therefore, equality holds in (\ref{NEW23}) as well. 
 This completes the proof of  Claim \ref{0000}.
\end{proof}

\begin{claim}\label{equal}
Let $s, s', t$ be integers with $s>s'\geq 1$, $t\geq s+1$, $k_1=\dots=k_{s'}> k_{s'+1}\geq\dots\geq k_t$ and $k_1+k_{s+1}\leq n <k_{s-1}+k_s$.
Let  $d\ge 1$ and $A_1, B_1, C_1, D_1\in \mathcal{R}_1$.
Suppose that $A_1, B_1$ are $d$-sequential and  $C_1, D_1$ are $d$-sequential with $\max A_1=\max C_1$ and $\max B_1=\max D_1$. Then $\gamma(A_1, B_1)=\gamma(C_1, D_1)$ and
$\delta(A_1, B_1)=\delta(C_1, D_1)$.
In particular,
if $A_1\overset{d}{\longrightarrow} B_1$, $C_1\overset{d}{\longrightarrow} D_1$ and $\max A_1=\max C_1$, then $\gamma(A_1, B_1)=\gamma(C_1, D_1)$ and
$\delta(A_1, B_1)=\delta(C_1, D_1)$.
\end{claim}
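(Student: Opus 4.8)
The plan is to treat the $\delta$- and $\gamma$-statements separately, reducing both to the single-variable results already available from the non-mixed type. For $\delta$, Remark~\ref{remark4.14} identifies $\delta(A_1,B_1)$ with $\beta(A_1,B_1)$ after relabelling $k_1,k_2,\dots,k_t$ by $k_1,k_{s+1},\dots,k_t$; this relabelling is legitimate because $n\ge k_1+k_{s+1}$, so the hypotheses of the non-mixed results hold and $\mathcal R_1$ coincides with the region $\mathbb R_1$ used there. Since $A_1,B_1$ and $C_1,D_1$ are $d$-sequential with $\max A_1=\max C_1$ and $\max B_1=\max D_1$, the equality $\delta(A_1,B_1)=\delta(C_1,D_1)$ is then exactly Lemma~\ref{clm27-} (Claim~4.3 of \cite{HP+}) applied in the relabelled setting. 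This disposes of the $\delta$-assertion at once.

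For $\gamma$ I would split $\gamma=\sum_{i=1}^{s'}\alpha_i+\sum_{i=s'+1}^{s}\alpha_i$. For $i\le s'$ we have $k_i=k_1$, so $\alpha_i=\alpha$ by Remark~\ref{remark4.14}, and the first sum equals $s'\,\alpha(A_1,B_1)=s'\,\alpha(C_1,D_1)$, again by Lemma~\ref{clm27-}. The whole problem therefore reduces to proving $\alpha_i(A_1,B_1)=\alpha_i(C_1,D_1)$ for each fixed $i\in[s'+1,s]$, where now $k_i<k_1$ and corresponding $k_i$-sets genuinely enter.

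Next I would exploit the $d$-sequential structure. Each of the four sets has its size-$d$ sliding window as its top block (Definition~\ref{def3.14}), so $\max A_1=\max C_1$ and $\max B_1=\max D_1$ force the windows of $A_1,C_1$ and of $B_1,D_1$ to coincide; writing $W_q=\{q,\dots,q+d-1\}$ we get $A_1=A\cup W_p$, $B_1=A\cup W_{p'}$, $C_1=A'\cup W_p$, $D_1=A'\cup W_{p'}$, the two pairs differing only in their prefixes $A,A'\subseteq[1,p-1]$. Iterating the additivity of $\alpha_i$ along the $\prec$-chain $A\cup W_p\prec A\cup W_{p+1}\prec\cdots\prec A\cup W_{p'}$ (Remark~\ref{add}, these intermediate sets lying in the lex-interval $\mathcal R_1$), it suffices to prove the prefix-independent identity $\alpha_i(A\cup W_q,A\cup W_{q+1})=\alpha_i(A'\cup W_q,A'\cup W_{q+1})$ for every $q\in[p,p'-1]$; summation over the common range $[p,p'-1]$ then gives the claim.

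To establish prefix-independence of a single unit $d$-step I would apply additivity once more, decomposing $\alpha_i(A\cup W_q,A\cup W_{q+1})$ into lex-consecutive steps $E<E'$ across the interval $(A\cup W_q,A\cup W_{q+1}]$, and invoke Claim~\ref{claim1}, which gives $\alpha_i(E,E')=\binom{\ell(E')}{\,k_i-k_1+\ell(E')\,}$, a quantity depending only on $\ell(E')$. Thus the unit-step value is a function of the multiset $\{\ell(E'):E'\in(A\cup W_q,A\cup W_{q+1}]\}$. The crux is to check that every set in this interval retains the full prefix $A$ (the lex-increments occur inside the window region $[q,n]$, and one reaches $A\cup W_{q+1}$ well before the maximal free block $\{n-d+1,\dots,n\}$ that would be needed to alter $A$) and that, since $\max A<q$ and $q+d\le n$ prevent the free part from filling all of $[q,n]$, the top run of $E'$ stays strictly inside $[q,n]$, so $\ell(E')$ is determined by the window/tail part of $E'$ alone and not by $A$. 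Hence the multisets for $A$ and for $A'$ coincide termwise, giving the unit-step equality. I expect this last point — the careful verification that the prefix never enters the top run, including the boundary situations $\max E'=n$ and $\max A=q-1$ — to be the main obstacle; the rest is bookkeeping via additivity and the cited single-variable lemmas. Finally, the ``in particular'' assertion is simply the case $\max B_1=\max D_1=n$ of what has just been proved, so it requires no separate argument.
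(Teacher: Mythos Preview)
Your proposal is correct and takes essentially the same approach as the paper. The only difference is that the paper skips your intermediate unit-$d$-step decomposition: it directly lists the lex-consecutive elements $R_1\prec\cdots\prec R_h$ of $[A_1,B_1]$ and $T_1\prec\cdots\prec T_h$ of $[C_1,D_1]$, asserts $\ell(R_j)=\ell(T_j)$ for each $j$ (this is exactly the content of your prefix/top-run check, which the paper states without justification), and applies Claim~\ref{claim1} termwise via Remark~\ref{add}.
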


\begin{proof}
 By the definitions of $A_1, B_1, C_1, D_1$, from Lemma \ref{clm27-} and Remark \ref{remark4.14}, we have
 $\delta(A_1, B_1)=\delta(C_1, D_1)$ and $\alpha_i(A_1, B_1)=\alpha_i(C_1, D_1)$ holds for each $i\in [s']$.  Next, we only need to show that for each $i\in [s'+1, s]$, $\alpha_i(A_1, B_1)=\alpha_i(C_1, D_1)$. Let $i\in [s'+1, s]$.
Denote
\begin{align*}
\mathcal{A}&=\Big\{R\in {[n]\choose k_1}: A_1\prec R\prec B_1\Big\},\\
\mathcal{B}&=\Big\{T\in {[n]\choose k_1}: C_1\prec T\prec D_1\Big\}.
\end{align*}
Since $\alpha_1(A_1, B_1)=\alpha_1(C_1, D_1)$, $|\mathcal{A}|=|\mathcal{B}|=:h$.
Let $\mathcal{A}=\{R_1, R_2, \dots, R_h\}$ and $\mathcal{B}=\{T_1, T_2, \dots, T_h\}$, where $R_1 \prec R_2 \prec \dots \prec R_h$ and $T_1 \prec T_2 \prec \dots \prec T_h$.
For any $j\in [h]$, we have $\ell(R_j)=\ell(T_j)$ and $|R_j\setminus R_j^{\rm t}|=|T_j\setminus T_j^{\rm t}|$. Thus, by Claim \ref{claim1}, for any $j\in [h-1]$, $\alpha_i(R_j, R_{j+1})=\alpha_i(T_j, T_{j+1})$. Furthermore, by Remark \ref{add}, we conclude that
$$\alpha_i(A_1, B_1)=\sum_{j\in [h-1]}\alpha_i(R_j, R_{j+1})=\sum_{j\in [h-1]}\alpha_i(T_j, T_{j+1})=\alpha_i(C_1, D_1),$$
as required.
\end{proof}

Now, we are ready to prove Lemma \ref{claim4}.
\begin{proof}[Proof of Lemma \ref{claim4}]
From the definitions of $A_1, B_1$ and $C_1$, we have $\ell(C_1)\geq \ell(B_1)>\ell(A_1)\geq 0$. Thus $\max B_1=\max C_1=n$.
If $\ell(C_1)> \ell(B_1)$, then $A_1<B_1<C_1$, then by Claim \ref{0000}, we have
$\delta(A_1, B_1)=\delta(B_1, C_1)$ and $\gamma(A_1, B_1)\leq \gamma(B_1, C_1)$, we are done.
Next we may assume that $\ell(C_1)= \ell(B_1)$.  Let $A'_1$ and $B'_1$ be the $k_1$-sets such that $A_1<A'_1$ and $B_1<B'_1$.
Then $\max A'_1=\max B'_1$, $\max B_1=\max C_1=n$.
Note that $\ell(C_1)= \ell(B_1)$ implies that $A_1<B_1<C_1$ does not happen. So $A'_1\ne B_1$ and $B'_1\ne C_1$,
 and $A'_1$, $B_1$ are $(\ell(A_1)+1)$-sequential, $B'_1$, $C_1$ are $(\ell(A_1)+1)$-sequential.
By Claim \ref{equal},
$\gamma(A'_1, B_1)=\gamma(B'_1, C_1)$ and $\delta(A'_1, B_1)=\delta(B'_1, C_1)$. Since $A_1<A'_1$ and $B_1<B'_1$ and $\max A'_1=\max B'_1$, by Claim \ref{0000},
$\gamma(A_1, A'_1)\leq \gamma(B_1, B'_1)$ and $\delta(A_1, A'_1)=\delta(B_1, B'_1)$. In view of Remark \ref{add}, we have
\begin{align*}
&\gamma(A_1, B_1)=\gamma(A_1, A'_1)+\gamma(A'_1, B_1)\leq \gamma(B_1, B'_1)+\gamma(B'_1, C_1)=\gamma(B_1, C_1),\\
&\delta(A_1, B_1)=\delta(A_1, A'_1)+\delta(A'_1, B_1)= \delta(B_1, B'_1)+\delta(B'_1, C_1)=\delta(B_1, C_1),
\end{align*}
as required.
\end{proof}

\section{Acknowledgements}
We would like to thank Andrey Kupavskii for a number of helpful comments and suggestions.


\frenchspacing


\begin{thebibliography}{99}


\bibitem{AK}
R. Ahlswede, L.H. Khachatrian, The complete intersection theorem for systems of finite sets, European J. Combin. 18 (2) (1997) 125--136.

\bibitem{BF2022}
P. Borg, C. Feghali, The maximum sum of sizes of cross-intersecting families of subsets of a set, Discrete Math. 345 (2022), No. 112981.



\bibitem{DS}
I. Dinur, S. Safra, On the hardness of approximating minimum vertex cover, Ann. Math. 162 (2005) 439--485.

\bibitem{Ellis2021}
D. Ellis,
Intersection problems in extremal combinatorics: Theorems, techniques and questions old and new,
(2021), arXiv:2107.06371v8.


\bibitem{EKR1961}
P. Erd\H{o}s, C. Ko, R. Rado, Intersection theorems for systems of finite sets,
Quart. J. Math. Oxf. 2 (12) (1961) 313--320.

 \bibitem{Fra2024}
 P. Frankl,
 On the maximum of the sum of the sizes of non-trivial cross-intersecting families,
 Combinatorica 44 (1)  (2024) 15--35.

\bibitem{FK2018}
P. Frankl, A. Kupavskii, Erd\H{o}s--Ko--Rado theorem for $\{0, ±1\}$-vectors,
J. Combin. Theory Ser. A 155 (2018), 157--179.

\bibitem{KK3}
P. Frankl, A. Kupavskii, Sharp results concerning disjoint cross intersecting
families, European J. Combin. 86 (2020) 103089.

\bibitem{FT}
P. Frankl, N. Tokushige, Some best possible inequalities concerning crossing-intersecting families, J. Combin. Theory Ser. A 61 (1992) 87--97.

\bibitem{FT1998}
P. Frankl, N.Tokushige,  Some inequalities concerning cross-intersecting families,
Combin. Probab. Comput. 7 (3) (1998) 247--260.

\bibitem{FT16}
P. Frankl, N. Tokushige, Invitation to intersection problems for finite sets,  J. Combin. Theory Ser. A 144 (2016) 157--211.



 \bibitem{FW2023}
 P. Frankl, J. Wang,
 A product version of the Hilton--Milner Theorem,
 J. Combin. Theory Ser. A 200 (2023), No. 105791.


 \bibitem{FW2024-EUJC}
  P. Frankl, J. Wang,
 Improved bounds on the maximum diversity of intersecting families,
European J. Combin. 118 (2024), No. 103885.


\bibitem{HPW}
Y. Huang, Y. Peng, J. Wang,
The maximum sum of sizes of non-empty pairwise cross intersecting families,
J. Combin. Theory Ser. B 178 (2026) 145--163.

\bibitem{HP+}
Y. Huang, Y. Peng,
Non-empty pairwise cross intersecting families,
J. Combin. Theory Ser. A 211 (2025), No. 105981.


\bibitem{hpgeneral}
Y. Huang, Y. Peng,
Complete Solution on Maximum Pairwise Cross Intersecting Families (II), (2025), in preparation.

\bibitem{HM1967}
A.J.W. Hilton, E.C. Milner,
Some intersection theorems for systems of finite
sets, Quart. J. Math. Oxf. 2 (18) (1967) 369--384.

\bibitem{KK4}
A.J.W. Hilton,
The Erd\H{o}s--Ko--Rado theorem with valency conditions,
Unpublished manuscript, 1976.

\bibitem{H}
A.J.W. Hilton, An intersection theorem for a collection of families of subsets of a finite set, J. London Math. Soc. 2 (1977) 369--376.



\bibitem{KK1}
G.O.H. Katona, A theorem of finite sets, in: Theory of Graphs, Proc. Colloq.
Tihany, Akad\'emai Kiad\'o, (1968) 187--207.

\bibitem{KL21}
N. Keller, N. Lifshitz,
The Junta method for hypergraphs and the Erd\H os-Chv\'atal simplex conjecture, Adv. Math. 392 (2021) 107991.

\bibitem{KK2}
J.B. Kruskal,
The number of simplices in a complex, in: Math. Opt. Techniques,
Univ. of Calif. Press, (1963) 251--278.

\bibitem{Kupavskii2018}
A. Kupavskii,
Structure and properties of large intersecting families,
arXiv:1810.00920.


\bibitem{SFQ2020}
C. Shi, P. Frankl, J. Qian,
On non-empty cross intersecting families, Combinatorica, 42 (2022) 1513--1525.

\bibitem{WZ}
 J. Wang, H. Zhang,
 Nontrivial independent sets of bipartite graphs and cross intersecting families, J. Combin. Theory, Ser. A 120 (2013) 129--141.


 \bibitem{WFL2025}
 Y. Wu, L. Feng, Y. Li,
 A result for hemi-bundled cross-intersecting families,
 Adv. in Applied Math. 169 (2025), No. 102912.

\bibitem{ZF2023}
M. Zhang, T. Feng,
A note on non-empty cross intersecting families, European J. Combin. 120 (2024) 103968.


\end{thebibliography}
\end{document}